\documentclass[11pt]{article}
\usepackage{latexsym,amsmath,amscd,amssymb, graphics}
\usepackage{amsmath,amssymb,mathrsfs,framed,esint,slashed,color}
\usepackage{amsthm}
\usepackage{enumerate}
\usepackage[colorlinks]{hyperref}
\usepackage{enumitem}
\usepackage[margin=1in]{geometry}
\usepackage{stmaryrd}
\usepackage{diagrams}
\usepackage{graphicx}
\usepackage[all]{xy}
\usepackage{pgfplots, pgfplotstable, booktabs, colortbl, array,multirow}

\usepackage{framed}

\theoremstyle{plain}
\newtheorem{theorem}{Theorem}[section]
\newtheorem{lemma}[theorem]{Lemma}
\newtheorem{proposition}[theorem]{Proposition}

\theoremstyle{definition}
\newtheorem{definition}{Definition}[section]

\newtheorem{remark}{Remark}[section]

\DeclareMathOperator{\dv}{div}

\newcommand{\maketable}[1]{%
\pgfplotstabletypeset[
header=false,
font=\small,
clear infinite,
every head row/.style={before row=\hline,after row=\hline},
every last row/.style={after row=\hline},
every row no 4/.style={before row=\hline},
every row no 8/.style={before row=\hline},
every row no 12/.style={before row=\hline},
create on use/newcol/.style={
        create col/set list={NaN,0,NaN,NaN,NaN,1,NaN,NaN,NaN,2,NaN,NaN,}
},
columns={newcol,0,1,2,3,4},
columns/newcol/.style={column type/.add={|}{},column name={$r$}},
columns/0/.style={sci zerofill,column type/.add={|}{|},column name={$h^{-1}$}},
columns/1/.style={dec sep align={c|},sci,sci 10e,sci zerofill,precision=2,column type/.add={}{|},column name={$\|u_h-u\|_{L^2(\Omega)}$}},
columns/3/.style={dec sep align={c|},sci,sci 10e,sci zerofill,precision=2,column type/.add={}{|},column name={$\|\rho_h-\rho\|_{L^2(\Omega)}$}}, 
columns/2/.style={dec sep align={c|},fixed zerofill,precision=2,column type/.add={}{|},column name={Rate}},
columns/4/.style={dec sep align={c|},fixed zerofill,precision=2,column type/.add={}{|},column name={Rate}},
]
{#1}
}

\newcommand{\maketabledt}[1]{%
\pgfplotstabletypeset[
header=false,
font=\small,
clear infinite,
every head row/.style={before row=\hline,after row=\hline},
every last row/.style={after row=\hline},
every row no 4/.style={before row=\hline},
every row no 8/.style={before row=\hline},
columns={0,1,2,3,4},
columns/0/.style={sci zerofill,column type/.add={|}{|},column name={$\Delta t^{-1}$}},
columns/1/.style={dec sep align={c|},sci,sci 10e,sci zerofill,precision=2,column type/.add={}{|},column name={$\|u_h-u\|_{L^2(\Omega)}$}},
columns/3/.style={dec sep align={c|},sci,sci 10e,sci zerofill,precision=2,column type/.add={}{|},column name={$\|\rho_h-\rho\|_{L^2(\Omega)}$}}, 
columns/2/.style={dec sep align={c|},fixed zerofill,precision=2,column type/.add={}{|},column name={Rate}},
columns/4/.style={dec sep align={c|},fixed zerofill,precision=2,column type/.add={}{|},column name={Rate}}
]
{#1}
}

\pagestyle{myheadings}

\markright{Gawlik and Gay-Balmaz  \hfill 
\hfill {\it A Variational Finite Element Discretization of Compressible Flow}\hfill}

\begin{document}

\title{A Variational Finite Element Discretization of Compressible Flow}

\author{Evan S. Gawlik\thanks{\noindent Department of Mathematics,  University of Hawaii at Manoa, \href{egawlik@hawaii.edu}{egawlik@hawaii.edu}} \; and \; Fran\c{c}ois Gay-Balmaz\thanks{\noindent CNRS - LMD, Ecole Normale Sup\'erieure, \href{francois.gay-balmaz@lmd.ens.fr}{francois.gay-balmaz@lmd.ens.fr}}}

\date{}

\maketitle

\begin{abstract}
We present a finite element variational integrator for compressible flows. The numerical scheme is derived by discretizing, in a structure preserving way, the Lie group formulation of fluid dynamics on diffeomorphism groups and the associated variational principles.  Given a triangulation on the fluid domain, the discrete group of diffeomorphisms is defined as a certain subgroup of the group of linear isomorphisms of a finite element space of functions. In this setting, discrete vector fields correspond to a certain subspace of the Lie algebra of this group. This subspace is shown to be isomorphic to a Raviart-Thomas finite element space. The resulting finite element discretization corresponds to a weak form of the compressible fluid equation that doesn't seem to have been used in the finite element literature. It extends previous work done on incompressible flows and at the lowest order on compressible flows. We illustrate the conservation properties of the scheme with some numerical simulations.
\end{abstract}

2010 Mathematics Subject Classification: 65P10, 76M60, 37K05, 37K65.

\tableofcontents

\section{Introduction}

Numerical schemes that respect conservation laws and other geometric structures are of paramount importance in computational fluid dynamics, especially for problems relying on long time simulation. This is the case for geophysical fluid dynamics in the context of meteorological or climate prediction.

\medskip

Schemes that preserve the geometric structures underlying the equations they discretize are known as geometric integrators \cite{HaLuWa2006}. 
One efficient way to derive geometric integrators is to exploit the variational formulation of the continuous equations and to mimic this formulation at the spatial and/or temporal discrete level. For instance, in classical mechanics, a time discretization of the Lagrangian variational formulation permits the derivation of numerical schemes, called variational integrators, that are symplectic, exhibit good energy behavior, and inherit a discrete version of Noether's theorem which guarantees the exact preservation of momenta arising from symmetries, see \cite{MaWe2001}.

\medskip

Geometric variational integrators for fluid dynamics were first derived in \cite{PaMuToKaMaDe2010} for the Euler equations of a perfect fluid.  These integrators exploit the viewpoint of \cite{Ar1966} that fluid motions correspond to geodesics on the group of volume preserving diffeomorphisms of the fluid domain.  The spatially discretized Euler equations emerge from an application of this principle on a finite dimensional approximation of the diffeomorphism group.  The approach has been extended to various equations of incompressible fluid dynamics with advected quantities \cite{GaMuPaMaDe2011}, rotating and stratified fluids for atmospheric and oceanic dynamics \cite{DeGaGBZe2014}, reduced-order models of fluid flow \cite{LiMaHoToDe2015}, anelastic and pseudo-incompressible fluids on 2D irregular simplicial meshes \cite{BaGB2019b}, compressible fluids \cite{BaGB2019}, and compressible fluids on spheres \cite{BrBaBiGBML2019}.  In all of the aforementioned references, the schemes that result are low-order finite difference schemes.  

\medskip

It was suggested in \cite{LiMaHoToDe2015} that the variational discretization initiated in \cite{PaMuToKaMaDe2010} can be generalized by letting the discrete diffeomorphism group act on finite element spaces. Such an approach was developed in \cite{NaCo2018} in the context of the ideal fluid and thus allowed for a higher order version of the method as well as an error estimate.  For certain parameter choices, this high order method coincides with an $H(\dv)$-conforming finite element method studied in \cite{Guzman2016}.

\medskip

In the present paper we develop a finite element variational discretization of compressible fluid dynamics by exploiting the recent progresses made in \cite{BaGB2019} and \cite{NaCo2018}, based on the variational method initiated in \cite{PaMuToKaMaDe2010}. Roughly speaking, our approach is the following. Given a triangulation on the fluid domain $\Omega$, we consider the space $V_h^r\subset L^2(\Omega)$ of polynomials of degree $\leq r$ on each simplex and define the group of discrete diffeomorphisms as a certain subgroup $G_h^r$ of the general linear group $GL(V_h^r)$. The action of $G_h^r$ on $V_h^r$ is understood as a discrete version of the action by pull back on functions in $L^2(\Omega)$. As a consequence, the action of the Lie algebra $\mathfrak{g}^r_h$ on $V_h^r$ is understood as as discrete version of the derivation along vector fields. In a similar way with \cite{PaMuToKaMaDe2010} and \cite{BaGB2019}, this interpretation naturally leads one to consider a specific subspace of $\mathfrak{g}_h^r$ consisting of Lie algebra elements that actually represent discrete vector fields. We show that this subspace is isomorphic to a Raviart-Thomas finite element space. We also define a Lie algebra-to-vector fields map, that allows a systematic definition of the semidiscrete Lagrangian for any given continuous Lagrangian. The developed setting allows us to derive the finite element scheme by applying a discrete version of the Lie group variational formulation of compressible fluids. In particular the discretization corresponds to a weak form of the compressible fluid equation that doesn't seem to have been used in the finite element literature. An incompressible version of this expression of the weak form has been used in \cite{GaGB2019} for the incompressible fluid with variable density. The setting that we develop applies in general to 2D and 3D fluid models that can be written in Euler-Poincar\'e form. For instance it applies to the rotating shallow water equations.

\medskip

The plan of the paper is the following. In Section \ref{sec_review}, we review the variational Lie group formulation of both incompressible and compressible fluids, by recalling the Hamilton principle on diffeomorphism groups corresponding to the Lagrangian description, and the induced Euler-Poincar\'e variational principle corresponding to the Eulerian formulation. We also briefly indicate how this formulation has been previously used to derive variational integrators. In Section \ref{Sec_distributional_derivative} we consider the distributional derivative, deduce from it a discrete derivative acting on finite element spaces, and study its properties. In particular we show that these discrete derivatives are isomorphic to a Raviart-Thomas finite element space. In the lower order setting, the space of discrete derivatives recovers the spaces used in previous works, both for the incompressible \cite{PaMuToKaMaDe2010} and compressible \cite{BaGB2019} cases. In Section \ref{sec_Lie_to_vector}, we define a map that associates to any Lie algebra element of the discrete diffeomorphism group a vector field on the fluid domain. We call such a map a Lie algebra-to-vector fields map. It is needed to define in a general way the semidiscrete Lagrangian associated to a given continuous Lagrangian. We study its properties, which are used later to write down the numerical scheme. In Section \ref{sec_FEVA}, we derive the numerical scheme by using the Euler-Poincar\'e equations on the discrete diffeomorphism group associated to the chosen finite element space. As we will explain in detail, in a similar way with the approach initiated in  \cite{PaMuToKaMaDe2010}, a nonholonomic version of the Euler-Poincar\'e principle is used to constrain the dynamics to the space of discrete derivatives. We show that such a space must be a subspace of a Brezzi-Douglas-Marini finite element space. Finally, we illustrate the behavior of the resulting scheme in Section \ref{sec_examples}.

\section{Review of variational discretizations in fluid dynamics}\label{sec_review}

We begin by reviewing the variational formulation of ideal and compressible fluid flows and their variational discretization.

\subsection{Incompressible flow}\label{subsec_incomp}

\paragraph{The Continuous Setting.} As we mentioned in the introduction, solutions to the Euler equations of ideal fluid flow in a bounded domain $\Omega\subset \mathbb{R}^n$ with smooth boundary can be formally regarded as curves $\varphi : [0,T] \rightarrow \operatorname{Diff}_{\mathrm{vol}}(\Omega)$ that are critical for the Hamilton principle
\begin{equation}\label{action}
\delta \int_0^TL(\varphi, \partial_t\varphi){\rm d}t=0
\end{equation}
with respect to variations $\delta \varphi$ vanishing at the endpoints.  Here $\operatorname{Diff}_{\rm vol}(\Omega)$ is the group of volume preserving diffeomorphisms of $\Omega$ and $\varphi(t) : \Omega \rightarrow \Omega$ is the map sending the position $X$ of a fluid particle at time $0$ to its position $x=\varphi(t,X)$ at time $t$. The Lagrangian in \eqref{action} is given by the kinetic energy
\[
L(\varphi,\partial_t \varphi) = \int_\Omega \frac{1}{2}|\partial_t \varphi|^2 \, {\rm d}X,
\]
and is invariant under the right action of $\operatorname{Diff}_{\mathrm{vol}}(\Omega)$ on itself via composition, namely,
\[
L(\varphi \circ \psi,\partial_t (\varphi \circ \psi)) = L(\varphi,\partial_t \varphi), \quad \forall \psi \in \operatorname{Diff}_{\mathrm{vol}}(\Omega).
\]
This symmetry is often referred to as the particle relabelling symmetry.

\medskip

As a consequence of this symmetry, the variational principle \eqref{action} can be recast on the Lie algebra of $\operatorname{Diff}_{\mathrm{vol}}(\Omega)$, which is the space $\mathfrak{X}_{\dv}(\Omega)$ of divergence-free vector fields on $\Omega$ with vanishing normal component on $\partial\Omega$. Namely, one seeks a curve $u : [0,T] \rightarrow \mathfrak{X}_{\dv}(\Omega)$ satisfying the critical condition
\begin{equation} \label{variationalu}
\delta \int_0^T \ell(u) \, {\rm d}t = 0,
\end{equation}
subject to variations $\delta u$ of the form
\[
\delta u = \partial_t v + \pounds_u v,\quad \text{with } v : [0,T] \rightarrow \mathfrak{X}_{\dv}(\Omega) \text{ and } v(0)=v(T)=0,
\]
where 
\[
\ell(u) = \int_{\Omega} \frac{1}{2}|u|^2 \, {\rm d}x,
\]
and $\mathcal{L}_uv= [u,v]=u \cdot  \nabla v - v\cdot \nabla u$ is the Lie derivative of the vector field $v$ along the vector field $u$. This principle is obtained from the Hamilton principle \eqref{action} by using the relation $\partial_t\varphi= u \circ\varphi$ between the Lagrangian and Eulerian velocities  and by computing the constrained variations of $u$ induced by the free variations of $\varphi$. The conditions for criticality in \eqref{variationalu} read
\begin{equation}\label{Euler_equations}
\begin{aligned}
\partial_t u + u \cdot \nabla u &= -\nabla p, \\
\dv u &= 0, 
\end{aligned}
\end{equation}
where $p$ is a Lagrange multiplier enforcing the incompressibility constraint. The process just described for the Euler equations is valid in general for invariant Euler-Lagrange systems on arbitrary Lie groups and is known as Euler-Poincar\'e reduction. It plays an important role in this paper as it is used also at the discrete level to derive the numerical scheme. We refer to Appendix \ref{Appendix_A} for more details on the Euler-Poincar\'e principle and its application to incompressible flows.

\paragraph{The Semidiscrete Setting.} The variational principle recalled above has been used to derive structure-preserving discretizations of the incompressible Euler equations~\cite{PaMuToKaMaDe2010}, and various generalizations of it have been used to do the same for other equations in incompressible fluid dynamics~\cite{GaMuPaMaDe2011,DeGaGBZe2014}.  In these discretizations, the group $\operatorname{Diff}_{\mathrm{vol}}(\Omega)$ is approximated by a subgroup $\mathring{G}_h$ of the general linear group $GL(V_h)$ over a finite-dimensional vector space $V_h$, and extremizers of a time-discretized action functional are sought within $\mathring{G}_h$. More precisely, extremizers are sought within a subspace of the Lie algebra $\mathring{\mathfrak{g}}_h$ of $\mathring{G}_h$ after reducing by a symmetry and imposing nonholonomic constraints. This construction typically leads to schemes with good long-term conservation properties.

The use of a subgroup of the general linear group to approximate $\operatorname{Diff}_{\mathrm{vol}}(\Omega)$ is inspired by the fact that $\operatorname{Diff}_{\mathrm{vol}}(\Omega)$ acts linearly on the Lebesgue space $L^2(\Omega)$ from the right via the pullback,
\[
f \cdot \varphi = f \circ \varphi, \quad f \in L^2(\Omega), \, \varphi \in \operatorname{Diff}_{\mathrm{vol}}(\Omega).
\]
This action satisfies
\begin{equation} \label{const}
f \cdot \varphi = f, \quad \text{ if $f$ is constant},
\end{equation}
and it preserves the $L^2$-inner product $\langle f,g \rangle = \int_\Omega fg \, {\rm d}x$ thanks to volume-preservation:
\begin{equation} \label{inner}
\langle f \cdot \varphi, g \cdot \varphi \rangle = \langle f, g \rangle, \quad \forall f,g \in L^2(\Omega).
\end{equation}
In the discrete setting, this action is approximated by the (right) action of $GL(V_h)$ on $V_h$,
\begin{equation}\label{Gh_action}
f \cdot q = q^{-1} f, \quad f \in V_h, \, q \in GL(V_h).\footnote{Note that the representation of the group diffeomorphism by pull-back on functions is naturally a \textit{right} action ($f\mapsto f\circ \varphi$), whereas the group $GL(V_h)$ acts by matrix multiplication on the \textit{left} ($f\mapsto q f$). This explain the use of the inverse $q^{-1}$ on right hand side of \eqref{Gh_action}.}
\end{equation}
By imposing discretized versions of the properties \eqref{const} and \eqref{inner}, the group $\mathring{G}_h$ is taken equal to 
\begin{equation}\label{Gh}
\mathring{G}_h = \{ q \in GL(V_h) \mid q \textbf{1} = \textbf{1}, \, \langle qf , qg  \rangle = \langle f,g \rangle, \; \forall \, f,g \in V_h \},
\end{equation}
where $\textbf{1} \in V_h$ denotes a discrete representative of the constant function 1.

While the elements of $\mathring{G}_h$ are understood as discrete versions of volume preserving diffeomorphisms, elements in the Lie algebra
\begin{equation}\label{gh}
\mathring{\mathfrak{g}}_h =\{A\in L(V_h, V_h)\mid A\mathbf{1}=0, \, \langle A f , g \rangle + \langle f,A g \rangle, \; \forall \, f,g \in V_h\}
\end{equation}
of $\mathring{G}_h$ are understood as discrete volume preserving vector fields\footnote{Strictly speaking, only a subspace of this Lie algebra represents discrete vector fields, as we will see in detail later.}.  The linear (right) action of the Lie algebra element $A\in \mathring{\mathfrak{g}}_h$ on a discrete function $f\in V_h$, induced by the action \eqref{Gh_action} of $G_h$, is given by
\begin{equation}\label{representation}
f\cdot A = - Af\;\footnote{Note the minus sign due to \eqref{Gh_action}, which is consistent with the fact that $f\mapsto f\cdot A$ is a \textit{right} representation while $f\mapsto AF$ is a \textit{left} representation.}
\end{equation}
It is understood as the discrete derivative of $f$ in the direction $A$.

In early incarnations of this theory, $V_h$ is taken equal to $\mathbb{R}^N$, where $N$ is the number of elements in a triangulation of $\Omega$, and $\textbf{1} \in \mathbb{R}^N$ is the vector of all ones. In this case, we have $\langle F,G \rangle = F^\mathsf{T} \Theta G$, where $\Theta \in \mathbb{R}^{N \times N}$ is a diagonal matrix whose $i^{th}$ diagonal entry is the volume of the $i^{th}$ element of the triangulation.  Hence $\mathring{G}_h$ is simply the group of $\Theta$-orthogonal matrices with rows summing to 1.  

In more recent treatments, a finite element formulation has been adopted \cite{NaCo2018}.  Namely, $V_h$ is taken equal to a finite-dimensional subspace of $L^2(\Omega)$, with the inner product inherited from $L^2(\Omega)$, and $\mathbf{1}$ is simply the constant function $1$. This is the setting that we will develop to the compressible case in the present paper.

\subsection{Compressible flows}

\paragraph{The Continuous Setting.} The Lie group variational formulation recalled above generalizes to compressible flows as follows. For simplicity we consider here only the barotropic fluid, in which the internal energy is a function of the mass density only. The variational treatment of the general (or baroclinic) compressible fluid is similar, see Appendix \ref{Appendix_A}. Consider the group $\operatorname{Diff}(\Omega)$ of all, not necessarily volume preserving, diffeomorphisms of $\Omega$ and the Lagrangian
\begin{equation}\label{L_compressible}
L(\varphi,\partial_t \varphi, \varrho_0) = \int_\Omega \Big[\frac{1}{2}\varrho_0 |\partial_t \varphi|^2 - \varrho_0e(\varrho_0/ J\varphi)\Big]\, {\rm d}X.
\end{equation}
Here $\varrho_0$ is the mass density of the fluid in the reference configuration, $J\varphi$ is the Jacobian of the diffeomorphism $\varphi$, and $e$ is the specific internal energy of the fluid. The equations of evolution are found as before from the Hamilton principle
\begin{equation}\label{action_compressible}
\delta\int_0^TL(\varphi, \partial_t\varphi, \varrho_0) {\rm d}t=0,
\end{equation}
subject to arbitrary variations $\delta\varphi$ vanishing at the endpoints and where $\varrho_0$ is held fixed.

The main difference with the case of incompressible fluids recalled earlier is that the Lagrangian $L$ is not invariant under the configuration Lie group $\operatorname{Diff}(\Omega)$ but only under the subgroup $\operatorname{Diff}(\Omega)_{\varrho_0}\subset \operatorname{Diff}(\Omega)$ of diffeomorphisms that preserve $\varrho_0$, i.e., $\operatorname{Diff}(\Omega)_{\varrho_0}=\{\varphi \in \operatorname{Diff}(\Omega) \mid (\varrho_0\circ \varphi)J\varphi= \varrho_0\}$, namely we have
\begin{equation}\label{symmetry_compressible}
L(\varphi \circ \psi,\partial_t (\varphi \circ \psi), \varrho_0) = L(\varphi,\partial_t \varphi, \varrho_0), \quad \forall \psi \in \operatorname{Diff}(\Omega)_{\varrho_0},
\end{equation}
as it is easily seen from \eqref{L_compressible}.

\medskip

As a consequence of the symmetry \eqref{symmetry_compressible}, one can associate to $L$ the Lagrangian $\ell(u,\rho)$ in Eulerian form, as follows
\begin{equation}\label{Lagr_to_Euler}
L(\varphi,\partial_t \varphi, \varrho_0)= \ell(u,\rho),\quad \text{with}\;\;u = \partial_t\varphi\circ\varphi^{-1},\quad \rho= (\varrho_0\circ\varphi^{-1}) J \varphi^{-1}
\end{equation}
and
\begin{equation}\label{Lagra_comp}
\ell(u,\rho)= \int_\Omega \Big[\frac{1}{2}\rho|u|^2 - \rho e(\rho) \Big] {\rm d}x.
\end{equation}
From the relations \eqref{Lagr_to_Euler}, the Hamilton principle \eqref{action_compressible} induces the variational principle
\begin{equation} \label{variationalu_rho}
\delta \int_0^T \ell(u,\rho) \, {\rm d}t = 0,
\end{equation}
with respect to variations $\delta u$ and $\delta\rho$ of the form
\[
\delta u = \partial_t v + \pounds_u v,\quad \delta \rho = - \operatorname{div}(\rho v), \quad \text{with } v : [0,T] \rightarrow \mathfrak{X}(\Omega) \text{ and } v(0)=v(T)=0.
\]
Here $\mathfrak{X}(\Omega)$ denotes the Lie algebra of $\operatorname{Diff}(\Omega)$, which consists of vector fields on $\Omega$, with vanishing normal component on $\partial\Omega$. The conditions for criticality in \eqref{variationalu_rho} yield the balance of fluid momentum
\begin{equation}\label{Euler}
\rho(\partial_t u + u \cdot \nabla u) = -\nabla p, \quad \text{with}\quad p = \rho^2\frac{\partial e}{\partial \rho}
\end{equation}
while the relation $\rho=  (\varrho_0\circ\varphi^{-1}) J \varphi^{-1}$ yields the continuity equation
\[
\partial_t\rho+\operatorname{div}(\rho u) = 0.
\]
As in the case of incompressible flow, the process just described for the group $\operatorname{Diff}(\Omega)$ is a special instance of the process of Euler-Poincar\'e reduction. We refer to Appendix \ref{Appendix_A} for more details and to \S\ref{sec_comp} for the rotating fluid in a gravitational field.

\paragraph{The Semidiscrete Setting.} A low-order semidiscrete variational setting has been described in \cite{BaGB2019} that extends the work of \cite{PaMuToKaMaDe2010,GaMuPaMaDe2011,DeGaGBZe2014} to the compressible case, with a particular focus on the rotating shallow water equations. It is based on the compressible version of the discrete diffeomorphism group \eqref{Gh}, namely
\begin{equation}\label{Gh_comp}
G_h = \{ q \in GL(V_h) \mid q \textbf{1} = \textbf{1} \},
\end{equation}
whose Lie algebra is
\begin{equation}\label{gh_comp}
\mathfrak{g}_h = \{A\in L(V_h, V_h)\mid A\mathbf{1}=0\}.
\end{equation}
A nonholonomic constraint is imposed in \cite{BaGB2019} to distinguish elements of $\mathfrak{g}_h$ that actually represent discrete versions of vector fields.  In this paper, we will see how this idea generalizes to the higher order setting. The representation of $\mathfrak{g}_h$ on $V_h$ is given as before by $f\mapsto f\cdot A= - Af$ and is understood as a discrete version of the derivative in the direction $A$.

Notice that we denote by $\mathring{G}_h$ and $G_h$ the subgroups of $GL(V_h)$ when the finite element space $V_h$ is left unspecified, similarly for the corresponding Lie algebras $\mathring{\mathfrak{g}}_h$ and $\mathfrak{g}_h$. When it is chosen as the space $V_h^r$ of polynomials of degree $\leq r$ on each simplex, we use the notations $\mathring{G}_h^r$, $G_h^r$ for the groups and  $\mathring{\mathfrak{g}}_h^r$, $\mathfrak{g}_h^r$ for the Lie algebras.

\section{The distributional directional derivative and its properties}\label{Sec_distributional_derivative}

As we have recalled above, when using a subgroup of $GL(V_h)$ to discretize the diffeomorphism group, its Lie algebra $\mathfrak{g}_h$ contains the subspace of discrete vector fields. More precisely, as linear maps in $\mathfrak{g}_h\subset L(V_h,V_h)$, these discrete vector fields act as discrete derivations on $V_h$. Once a vector space $V_h$ is selected, it is thus natural to choose these discrete vector fields as distributional directional derivatives. In this section we recall this definition, study its properties and show that these derivations are isomorphic to a Raviart-Thomas finite element space.

\medskip

Let $\Omega$ be as before the domain of the fluid, assumed to be bounded with smooth boundary. We consider the Hilbert spaces
\begin{align*}
H(\dv,\Omega)&= \{ u \in L^2(\Omega)^n \mid \dv u \in L^2(\Omega)\}\\
H_0(\dv,\Omega)&= \{ u \in H(\dv,\Omega) \mid\left.u \cdot n\right|_{\partial\Omega} = 0\}\\
\mathring{H}(\dv,\Omega)&= \{u \in H(\dv,\Omega) \mid \dv u = 0, \, \left.u \cdot n\right|_{\partial\Omega} = 0 \}.
\end{align*}

\subsection{Definition and properties}

Let $\mathcal{T}_h$ be a triangulation of $\Omega$ having maximum element diameter $h$.  We assume that $\mathcal{T}_h$ belongs to a shape-regular, quasi-uniform family of triangulations of $\Omega$ parametrized by $h$.  That is, there exist positive constants $C_1$ and $C_2$ independent of $h$ such that
\[
\max_{K \in \mathcal{T}_h} \frac{h_K}{\rho_K} \le C_1, \text{ and } \max_{K \in \mathcal{T}_h} \frac{h}{h_K} \le C_2,
\]
where $h_K$ and $\rho_K$ denote the diameter and inradius of a simplex $K$.
For $r \ge 0$ an integer, we consider the subspace of $L^2(\Omega)$
\begin{equation} \label{Vh}
V_h^r = \{ f \in L^2(\Omega) \mid \left.f\right|_K \in P_r(K),\;   \forall K \in \mathcal{T}_h \},
\end{equation}
where $P_r(K)$ denotes the space of polynomials of degree $\le r$ on a simplex $K$.

\medskip

\begin{definition} Given $u\in H(\operatorname{div}, \Omega)$, the \textbf{\textit{distributional derivative in the direction $u$}} is the linear map $\nabla_u^{\rm dist} : L^2(\Omega) \rightarrow C^\infty_0(\Omega)'$ defined by
\begin{equation}\label{def_nabla_dist}
\int_\Omega (\nabla_u^{\rm dist} f ) g \,{\rm  d}x = -\int_\Omega f \operatorname{div}(gu)  \, {\rm d}x, \quad \forall g \in C^\infty_0(\Omega).
\end{equation}
\end{definition}

\medskip

When a triangulation $\mathcal{T}_h$ is fixed, $f\in V_h^r$, and $u\in   H_0(\operatorname{div}, \Omega)\cap L^p(\Omega)^n$, $p>2$, the distributional directional derivative \eqref{def_nabla_dist} can be rewritten as
\begin{equation}\label{rewriting_nabla_dist}
\begin{aligned}
\int_\Omega ( \nabla_u^{\rm dist} f ) g \, {\rm d}x&= \sum_{K \in \mathcal{T}_h} \int_K (\nabla_u f) g \, {\rm d}x - \sum_{K\in \mathcal{T}_h}\int_{\partial K} (u\cdot n) fg {\rm d}s\\
&= \sum_{K \in \mathcal{T}_h} \int_K (\nabla_u f) g \, {\rm d}x - \sum_{e \in \mathcal{E}_h^0} \int_e u \cdot \llbracket f \rrbracket g \,{\rm  d}s,
\end{aligned}
\end{equation}
for all $g\in C^\infty_0(\Omega)$. Here, $\nabla _uf= u \cdot\nabla f$ denotes the derivative of $f$ along $u$, $\mathcal{E}_h^0$ denotes the set of interior $(n-1)$-simplices in $\mathcal{T}_h$ (edges in two dimensions), and $\llbracket f \rrbracket$ is defined by
\[
\llbracket f \rrbracket := f_1 n_1 + f_2 n_2, \quad \text{on}\quad e = K_1 \cap K_2 \in \mathcal{E}_h^0,
\]
with $f_i := \left.f\right|_{K_i}$, $n_1$ the normal vector to $e$ pointing from $K_1$ to $K_2$, and similarly for $n_2$.

\medskip

Note that there are some subtleties that arise when looking at traces on subsets of the boundary if the trace is a distribution, which explains why we need to take $u\in   H_0(\operatorname{div}, \Omega)\cap L^p(\Omega)^n$, for some $p>2$ when passing to the second line in \eqref{rewriting_nabla_dist}.  The trace of a vector field $u\in H(\dv,K)$ on $\partial K$ satisfies $u \cdot n \in H^{-1/2}(\partial K) = H^{1/2}_0(\partial K)' = H^{1/2}(\partial K)'$, but the trace of $u$ on $e \subset \partial K$ satisfies $u \cdot n \in H^{1/2}_{00}(e)'$, where $H^{1/2}_{00}(e)$ defined by
\[
H^{1/2}_{00}(e)=\{ g \in H^{1/2}(e) \mid \text{ the zero-extension of $g$ to $\partial K$ belongs to $H^{1/2}(\partial K)$} \} \subsetneq H^{1/2}_0(e),
\]
see, e.g., \cite{Ba2012}. So for $u\in H(\dv,\Omega)$ and smooth $g$, $\int_{\partial K} (u \cdot n) g \,{\rm  d}s$ is always well-defined, but $\int_e (u \cdot n) g \,{\rm  d}s$ need not be; some extra regularity for $u$ is required to make it well-defined.

\medskip

\begin{definition} Given $A\in L(V_h^r, V_h^r)$ and $u\in H_0(\operatorname{div},\Omega)\cap L^p(\Omega)^n$, $p>2$, we say that \textbf{\textit{$A$ approximates $-u$ in $V^r_h$}}\footnote{The fact that $A$ approximates $-u$ and not $u$ is consistent with the fact that $f\mapsto Af$ is a left Lie algebra action whereas the derivative $f\mapsto \nabla_u f$ is a right Lie algebra action.} if whenever $f \in L^2(\Omega)$ and $f_h \in V_h^r$ is a sequence satisfying $\|f - f_h \|_{L^2(\Omega)} \rightarrow 0$, we have
\begin{equation} \label{discreteLiedef}
\langle A f_h - \nabla_u^{\rm dist} f, g \rangle \rightarrow 0, \quad \forall g \in C^\infty_0(\Omega).
\end{equation}
In other words, we require that $A$ is a consistent approximation of $\nabla_u^{\rm dist}$ in $V^r_h$.
\end{definition}
\medskip
Note that the above definition abuses notation slightly; we are really dealing with a sequence of $A$'s parametrized by $h$.
\medskip

\begin{proposition}\label{A_u_approximation} Given $u\in H_0(\dv, \Omega) \cap L^p(\Omega)^n$ and $r\geq 0$ an integer, a consistent approximation of $\nabla_u^{\rm dist}$ in $V_h^r$ is obtained by setting $A=A_u\in L(V_h^r, V_h^r)$ defined by
\begin{equation} \label{consistentapprox}
\langle A_u f, g \rangle := \sum_{K \in \mathcal{T}_h} \int_K (\nabla_u f) g \, {\rm d}x - \sum_{e \in \mathcal{E}_h^0} \int_e u \cdot \llbracket f \rrbracket \{g\} \, {\rm d}s, \quad \forall f,g \in V_h^r,
\end{equation}
where $\{g\} := \frac{1}{2}(g_1 + g_2)$ on $e=K_1\cap K_2$.

Moreover, if $r\geq 1$, $p=\infty$, and $A\in L(V_h^r, V_h^r)$ is any other operator that approximates $u$ in $V^r_h$, then $A$ must be close to $A_u$ in the following sense: if $f \in L^2(\Omega)$, $g \in C^\infty_0(\Omega)$, and if $f_h,g_h \in V_h^r$ satisfy $\|f_h-f\|_{L^2(\Omega)} \rightarrow 0$ and $h^{-1}\|g-g_h\|_{L^2(\Omega)} + \left(\sum_{K \in \mathcal{T}_h} |g-g_h|_{H^1(K)}^2\right)^{1/2} \rightarrow 0$, then
\[
\langle (A-A_u) f_h, g_h \rangle \rightarrow 0,
\]
provided that $\|Af_h\|_{L^2(\Omega)} \le C(u,f) h^{-1}$ for some constant $C(u,f)$.
\end{proposition}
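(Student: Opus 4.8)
The first assertion is essentially a consistency computation: I would verify that the bilinear form \eqref{consistentapprox} defines a linear operator $A_u$ on $V_h^r$ (using that the right-hand side is a bounded linear functional of $g\in V_h^r$ for each fixed $f$, plus the Riesz representation with respect to $\langle\cdot,\cdot\rangle$), and then show \eqref{discreteLiedef} holds. Given $f_h\to f$ in $L^2$ and $g\in C^\infty_0(\Omega)$, I would pick $g_h\in V_h^r$ with $\|g-g_h\|_{L^2}+h\bigl(\sum_K|g-g_h|_{H^1(K)}^2\bigr)^{1/2}\to 0$ (standard interpolation, valid since $g$ is smooth), write $\langle A_u f_h,g\rangle = \langle A_u f_h, g-g_h\rangle + \langle A_u f_h, g_h\rangle$, compare $\langle A_u f_h, g_h\rangle$ with the second line of \eqref{rewriting_nabla_dist} evaluated at $(f_h,g_h)$ — the only discrepancy being $\{g_h\}$ versus $g_h$ on interior edges, which is controlled by $\sum_e\int_e|u|\,|\llbracket f_h\rrbracket|\,|g_h-\{g_h\}|$ — and finally compare $\nabla_u^{\rm dist}f_h$ with $\nabla_u^{\rm dist}f$ using \eqref{def_nabla_dist} and $f_h\to f$. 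The edge terms are handled by trace inequalities on shape-regular simplices together with the $L^\infty$ (or $L^p$, $p>2$) bound on $u$.

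For the second assertion, the key idea is that $A-A_u$ is a linear operator on the finite-dimensional space $V_h^r$, so I only need to estimate $\langle(A-A_u)f_h,g_h\rangle$. Write it as
\[
\langle (A-A_u)f_h, g_h\rangle = \langle (A-A_u)f_h, g_h-g\rangle + \langle A f_h, g\rangle - \langle A_u f_h, g\rangle .
\]
The middle two terms: $\langle A_u f_h, g\rangle \to \langle \nabla_u^{\rm dist}f, g\rangle$ by the first part, and $\langle A f_h, g\rangle \to \langle \nabla_u^{\rm dist}f, g\rangle$ by the hypothesis that $A$ approximates $-u$ in $V_h^r$ (here I use that $g\in C^\infty_0(\Omega)$ is a fixed test function and $f_h\to f$ in $L^2$). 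So their difference tends to $0$. The remaining term $\langle(A-A_u)f_h, g_h-g\rangle$ is where the quantitative hypotheses enter: by Cauchy–Schwarz it is bounded by $\bigl(\|Af_h\|_{L^2}+\|A_u f_h\|_{L^2}\bigr)\|g-g_h\|_{L^2}$. The hypothesis gives $\|Af_h\|_{L^2}\le C(u,f)h^{-1}$, and I would establish the analogous bound $\|A_u f_h\|_{L^2}\le C(u,f)h^{-1}$ directly from \eqref{consistentapprox}: the volume term is bounded by $\|u\|_\infty\bigl(\sum_K|f_h|_{H^1(K)}^2\bigr)^{1/2}\lesssim h^{-1}\|f_h\|_{L^2}$ by an inverse inequality, and the edge term by $\|u\|_\infty\sum_e\|\llbracket f_h\rrbracket\|_{L^2(e)}\|\{g\}\|_{L^2(e)}$, again $\lesssim h^{-1}\|f_h\|_{L^2}\|g\|_{L^2}$ after scaled trace inequalities — giving $\|A_u f_h\|_{L^2}\lesssim h^{-1}\|f_h\|_{L^2}$, hence $\le C(u,f)h^{-1}$ once $\|f_h\|_{L^2}$ is bounded. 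Then $\langle(A-A_u)f_h,g_h-g\rangle$ is bounded by $C(u,f)h^{-1}\|g-g_h\|_{L^2}$, which tends to $0$ precisely because the hypothesis assumes $h^{-1}\|g-g_h\|_{L^2}\to 0$.

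The main obstacle, and the reason the hypotheses are stated the way they are, is matching the order of the singularity: $A-A_u$ can be as large as $O(h^{-1})$ on $V_h^r$, so pairing it against $g_h-g$ is only harmless if $\|g-g_h\|_{L^2}=o(h)$ — which is exactly what the assumption $h^{-1}\|g-g_h\|_{L^2}+\bigl(\sum_K|g-g_h|_{H^1(K)}^2\bigr)^{1/2}\to 0$ provides (the $H^1$-seminorm piece is what makes the interpolant of a smooth $g$ satisfy it, and is also what I need for the edge-term estimates via trace inequalities). I would therefore be careful to track all $h$-powers explicitly through the inverse and trace inequalities, and to use shape-regularity and quasi-uniformity so that local mesh sizes $h_K$ can be replaced by $h$ up to constants. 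The restriction $r\ge 1$ enters because for $r=0$ the seminorm terms vanish trivially and the relevant comparison of $A$ with $A_u$ would need a different (and in fact stronger) argument; the restriction $p=\infty$ simplifies the edge estimates, though $p>2$ would likely suffice with Hölder in place of the crude $\|u\|_\infty$ bounds.
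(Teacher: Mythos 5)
Your treatment of the second assertion is essentially the paper's: the same decomposition into (i) the consistency hypothesis on $A$, (ii) the consistency of $\nabla_u^{\rm dist}$ applied to $f-f_h$, (iii) the term controlled by $\|Af_h\|_{L^2}\|g_h-g\|_{L^2}\le C(u,f)h^{-1}\|g_h-g\|_{L^2}$, and (iv) the term $\langle A_uf_h,g_h-g\rangle$. The only cosmetic difference is that you first package the inverse and trace inequalities into the operator bound $\|A_uf_h\|_{L^2}\lesssim h^{-1}\|u\|_{L^\infty(\Omega)}\|f_h\|_{L^2(\Omega)}$ and then apply Cauchy--Schwarz, whereas the paper estimates $\langle A_uf_h,g_h-g\rangle$ directly from \eqref{consistentapprox}, which is why the broken $H^1$ seminorm of $g-g_h$ appears in its hypothesis and not in your version. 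That part is sound.

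The first assertion is where your plan has a genuine gap. You route the consistency proof through a discrete interpolant $g_h$ of the smooth test function $g$, and every error term you must then control --- $\langle A_uf_h,g-g_h\rangle$, the volume term $\sum_K\int_K(\nabla_uf_h)(g_h-g)\,{\rm d}x$, and the edge term involving $\{g_h\}-g$ --- carries a factor $h^{-1}$ (from $\|A_uf_h\|_{L^2}\lesssim h^{-1}\|f_h\|_{L^2}$, or equivalently from the inverse and trace inequalities) multiplying $\|g-g_h\|_{L^2}$. These products tend to zero only if $\|g-g_h\|_{L^2}=o(h)$, which is achievable for $r\ge 1$ but impossible for $r=0$: piecewise constants approximate a nonconstant smooth $g$ only to $O(h)$, and the broken seminorm $\bigl(\sum_K|g-g_h|_{H^1(K)}^2\bigr)^{1/2}$ then equals $|g|_{H^1(\Omega)}$ and does not vanish. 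So your argument proves consistency only for $r\ge 1$, while the statement (and the connection to the lowest-order schemes of the earlier works) requires $r=0$ as well. The fix is to introduce no $g_h$ at all: since $g\in C^\infty_0(\Omega)$ is continuous across interior faces, $\{g\}=g$ there, so the right-hand side of \eqref{consistentapprox} evaluated at a smooth $g$ is \emph{exactly} $\langle\nabla_u^{\rm dist}f_h,g\rangle$ by \eqref{rewriting_nabla_dist}; consistency then reduces to $\langle\nabla_u^{\rm dist}(f_h-f),g\rangle=-\int_\Omega(f_h-f)\dv(gu)\,{\rm d}x\le\|f_h-f\|_{L^2(\Omega)}\|\dv(gu)\|_{L^2(\Omega)}\to0$, with no mesh-dependent estimates and no restriction on $r$. (Relatedly, your closing remark that for $r=0$ ``the seminorm terms vanish trivially'' is backwards; the second assertion is restricted to $r\ge1$ precisely because its hypothesis on $g_h$ cannot be met by piecewise constants.)
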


As we will see in \S\ref{sec:loworder}, for $r=0$, the definition of $A_u$ in \eqref{consistentapprox} recovers the one used in \cite{BaGB2019}. There, a different definition of ``$A$ approximates $-u$'' than \eqref{discreteLiedef} was considered for the particular case $r=0$. When this definition is used, analogous statements of both parts of Proposition \ref{A_u_approximation} hold for $r=0$, under an additional assumption of the family of meshes~\cite[Lemma 2.2]{BaGB2019}.
\begin{proof} The operator $A_u$ is a consistent approximation of $\nabla_u^{\rm dist}$, since for all $g\in C^\infty_0(\Omega)$
\begin{align*}
\langle A_u f_h -\nabla_u^{\rm dist} f, g \rangle
&= \langle \nabla_u^{\rm dist} (f_h - f), g \rangle \\
&= -\int_{\Omega} (f_h-f)\operatorname{div}(gu){\rm d}x \\
&\le \|f_h - f\|_{L^2(\Omega)} \|u \cdot \nabla g + g \dv u \|_{L^2(\Omega)} \;\rightarrow \; 0.
\end{align*}
For the second part, we note that
\begin{align*}
\langle A f_h, g_h \rangle - \langle A_u f_h, g_h \rangle &= \langle A f_h - A_u f_h, g \rangle + \langle A f_h, g_h-g \rangle - \langle A_u f_h, g_h-g \rangle \\
&= \langle A f_h - \nabla_u^{\rm dist}  f_h, g \rangle + \langle A f_h, g_h - g \rangle - \langle A_u f_h, g_h-g \rangle \\
&= \langle A f_h - \nabla_u^{\rm dist}  f, g \rangle + \langle \nabla_u^{\rm dist} (f-f_h), g \rangle  + \langle A f_h, g_h - g \rangle - \langle A_u f_h, g_h-g \rangle \\
&\le |\langle A f_h -\nabla_u^{\rm dist}  f, g \rangle| + \|f-f_h\|_{L^2(\Omega)} \|\dv(ug)\|_{L^2(\Omega)} 
\\&\quad +  \|Af_h\|_{L^2(\Omega)} \|g_h-g\|_{L^2(\Omega)} + | \langle A_u f_h, g_h-g  \rangle|.
\end{align*}
By assumption, the first three terms above tend to zero as $h \rightarrow 0$.  
The last term satisfies
\begin{align*}
\langle A_u f_h, g_h - g \rangle
&= \langle A_u f_h, g_h \rangle - \langle \nabla_u^{\rm dist} f_h, g \rangle \\
&= \sum_{K \in \mathcal{T}_h} \int_K (\nabla_u f_h) (g_h-g) \, {\rm d}x - \sum_{e \in \mathcal{E}_h^0} \int_e u \cdot \llbracket f_h \rrbracket \{g_h-g\} \, {\rm d}s
\end{align*}
since $\{g\} = g$ on each $e \in \mathcal{E}_h^0$.  
To analyze these integrals, we make use of the inverse estimate~\cite{ErGu2004}
\[
\|f_h\|_{H^1(K)} \le C h_K^{-1} \|f_h\|_{L^2(K)}, \quad \forall f_h \in V_h^r, \, \forall K \in \mathcal{T}_h,
\]
and the trace inequality~\cite{Ar1982}
\[
\|f\|_{L^2(\partial K)} \le C\left( h_K^{-1/2} \|f\|_{L^2(K)} + h_K^{1/2} |f|_{H^1(K)} \right), \quad \forall f \in H^1(K), \, \forall K \in \mathcal{T}_h.
\]
Using the inverse estimate, we see that
\begin{align*}
\left| \int_K (\nabla_u f_h) (g_h-g) \, {\rm d}x \right|
&\le \|u\|_{L^\infty(\Omega)} |f_h|_{H^1(K)} \|g_h-g\|_{L^2(K)} \\
&\le Ch_K^{-1} \|u\|_{L^\infty(\Omega)} \|f_h\|_{L^2(K)} \|g_h-g\|_{L^2(K)}.
\end{align*}
Using the trace inequality and the inverse estimate, we see also that
\begin{align*}
&\left| \int_e u \cdot \llbracket f_h \rrbracket \{g_h-g\} \, {\rm d}s \right| \\
& \le \frac{1}{2} \|u\|_{L^\infty(\Omega)} \left( \|f_{h1}\|_{L^2(e)} + \|f_{h2}\|_{L^2(e)}  \right) \left( \|g_{h1}-g_1\|_{L^2(e)} + \|g_{h2}-g_2\|_{L^2(e)}  \right) \\
&\le C \|u\|_{L^\infty(\Omega)} \left( h_{K_1}^{-1/2} \|f_h\|_{L^2(K_1)} + h_{K_2}^{-1/2} \|f_h\|_{L^2(K_2)} \right) \\
&\quad \times \left( h_{K_1}^{-1/2} \|g_h-g\|_{L^2(K_1)} + h_{K_1}^{1/2} |g_h-g|_{H^1(K_1)} + h_{K_2}^{-1/2} \|g_h-g\|_{L^2(K_2)} + h_{K_2}^{1/2} |g_h-g|_{H^1(K_2)} \right),
\end{align*}
where $K_1,K_2 \in \mathcal{T}_h$ are such that $e = K_1 \cap K_2$, $f_{hi} = \left. f_h \right|_{K_i}$, $g_{hi} = \left. g_h \right|_{K_i}$, and $g_i = \left. g \right|_{K_i}$.
Summing over all $K \in \mathcal{T}_h$ and all $e \in \mathcal{E}_h^0$, and using the quasi-uniformity of $\mathcal{T}_h$, we get
\begin{align*}
|\langle A_u f_h, g_h - g \rangle|
&\le C \|u\|_{L^\infty(\Omega)} \|f_h\|_{L^2(\Omega)} \Big( h^{-1} \|g_h-g\|_{L^2(\Omega)} + \Big(\sum_{K \in \mathcal{T}_h} |g-g_h|_{H^1(K)}^2\Big)^{1/2} \Big) \rightarrow 0.
\end{align*}
\end{proof}

\medskip

Note that formula \eqref{consistentapprox} for $A_u$ is obtained from formula \eqref{rewriting_nabla_dist}, valid for $f\in V^r_h$ and $g\in C^\infty_0(\Omega)$, by rewriting it for the case where $g\in V^r_h$ and choosing to replace $g\rightarrow \{g\}$ in the second term in \eqref{rewriting_nabla_dist}.

\medskip

\begin{proposition}\label{A_u_properties} For all $u\in H_0(\operatorname{div}, \Omega)\cap L^p(\Omega)$, $p>2$, we have
\begin{equation}\label{first_statement}
A_u \mathbf{1}=0\quad\text{and}\quad \langle A_u f, g \rangle + \langle f, A_ug \rangle + \langle f, (\dv u)g \rangle = 0, \quad \forall f,g \in V_h^r.
\end{equation}
Hence, if $u\in \mathring{H}(\operatorname{div}, \Omega)$, then
\begin{equation}\label{second_statement}
A_u \mathbf{1}=0\quad\text{and}\quad \langle A_u f,g \rangle + \langle f, A_u g \rangle = 0, \quad\forall f,g \in V_h^r.
\end{equation}
\end{proposition}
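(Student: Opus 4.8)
The plan is to prove the two statements in \eqref{first_statement} directly from the definition \eqref{consistentapprox}; the consequence \eqref{second_statement} then follows at once by setting $\dv u = 0$. The identity $A_u\mathbf{1} = 0$ is the easy half: taking $f = \mathbf{1}$ in \eqref{consistentapprox}, the local derivative $\nabla_u\mathbf{1} = u\cdot\nabla 1$ vanishes on each $K$, and on each interior edge $e = K_1\cap K_2$ we have $\llbracket\mathbf{1}\rrbracket = n_1 + n_2 = 0$ since $n_2 = -n_1$; hence $\langle A_u\mathbf{1}, g\rangle = 0$ for every $g\in V_h^r$, i.e. $A_u\mathbf{1} = 0$.

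For the integration-by-parts identity I would exploit the symmetry of the $L^2$-inner product and compute $\langle A_u f, g\rangle + \langle A_u g, f\rangle$ from \eqref{consistentapprox}. In the volume terms the Leibniz rule gives $(\nabla_u f)g + (\nabla_u g)f = \nabla_u(fg) = \dv\big((fg)u\big) - (fg)\dv u$, so the divergence theorem applied on each simplex $K$ — legitimate because $fg|_K$ is a polynomial and $u|_K\in H(\dv,K)$ — turns $\sum_K\int_K\nabla_u(fg)\,dx$ into $\sum_K\int_{\partial K}(u\cdot n_K)(fg)\,ds - \int_\Omega (fg)\dv u\,dx$. Reassembling the boundary integrals, the faces contained in $\partial\Omega$ contribute nothing because $u\cdot n|_{\partial\Omega} = 0$, while each interior face $e = K_1\cap K_2$ contributes $\int_e(u\cdot n)(f_1g_1 - f_2g_2)\,ds$, with $n$ the unit normal pointing from $K_1$ to $K_2$.

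The crux is then the elementary edge identity
\[
u\cdot\llbracket f\rrbracket\{g\} + u\cdot\llbracket g\rrbracket\{f\} = (u\cdot n)(f_1g_1 - f_2g_2),
\]
which follows by writing $\llbracket f\rrbracket = (f_1 - f_2)n$, $\{g\} = \tfrac12(g_1 + g_2)$ (and symmetrically in $g$ and $f$) and expanding: the cross terms cancel and one is left with $\tfrac12(u\cdot n)\cdot 2(f_1g_1 - f_2g_2)$. Since the right-hand side is exactly the interior-face contribution produced by the divergence theorem above, the edge sums cancel and, using $\langle A_u g, f\rangle = \langle f, A_u g\rangle$, we obtain $\langle A_u f, g\rangle + \langle f, A_u g\rangle = -\int_\Omega(fg)\dv u\,dx = -\langle f, (\dv u)g\rangle$, which is \eqref{first_statement}. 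Specializing to $u\in\mathring H(\dv,\Omega)$ kills the divergence term and yields \eqref{second_statement}.

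The computation itself is routine; the one point requiring care is the rigorous meaning of the per-simplex integration by parts and its reassembly over single faces. The boundary term $\int_e(u\cdot n)(fg)\,ds$ over a single face must be read as a duality pairing, and since $fg|_e$ is only a polynomial on $e$ (so its zero-extension to $\partial K$ need not lie in $H^{1/2}(\partial K)$), this needs $u\cdot n|_e\in H^{1/2}_{00}(e)'$, which is precisely why the hypothesis $u\in L^p(\Omega)^n$ with $p>2$ is imposed, exactly as in the discussion following \eqref{rewriting_nabla_dist}. Granting this, the individual edge integrals in \eqref{consistentapprox} and all the reassembled boundary terms are well-defined, and the cancellation above is justified.
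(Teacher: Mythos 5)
Your proof is correct and follows essentially the same route as the paper: the Leibniz identity $(\nabla_u f)g+(\nabla_u g)f=\dv(fgu)-fg\dv u$, the divergence theorem on each simplex, and cancellation of the interior-face contributions against the $\llbracket\cdot\rrbracket\{\cdot\}$ terms using $n_2=-n_1$. Your per-face identity $u\cdot\llbracket f\rrbracket\{g\}+u\cdot\llbracket g\rrbracket\{f\}=(u\cdot n)(f_1g_1-f_2g_2)$ is just a compact repackaging of the paper's edge-by-edge computation, and your remark on the well-definedness of the single-face trace pairings matches the paper's earlier discussion.
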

\begin{proof} The first property $A_u\mathbf{1}=0$ follows trivially from the expression \eqref{consistentapprox} since $\nabla_u 1=0$ and $\llbracket 1 \rrbracket=0$.
We now prove the second equality. Using \eqref{consistentapprox}, we compute
\begin{align*}
&\langle A_u f, g \rangle + \langle f, A_ug \rangle\\
&=\sum_{K \in \mathcal{T}_h} \int_K \left((\nabla_u f) g + ( \nabla_ug) f \right)\, {\rm d}x - \sum_{e \in \mathcal{E}_h^0} \int_e u \cdot \llbracket f \rrbracket \{g\} \, {\rm d}s - \sum_{e \in \mathcal{E}_h^0} \int_e u \cdot \llbracket g \rrbracket \{f\} \, {\rm d}s\\
&=\sum_{K \in \mathcal{T}_h} \int_K \left(\operatorname{div}(fg u) - fg \operatorname{div} u \right)\, {\rm d}x - \sum_{e \in \mathcal{E}_h^0} \int_e u \cdot\left( \llbracket g \rrbracket \{f\}+\llbracket f \rrbracket \{g\}\right) \, {\rm d}s \\
&=- \sum_{K \in \mathcal{T}_h} \int_K  fg \operatorname{div} u \, {\rm d}x + \sum_{K \in \mathcal{T}_h}  \sum_{e\in K} \int_e f_e^Kg_e^K u \cdot n ^K_e \,{\rm d}s\\
&\qquad  -\sum_{e \in \mathcal{E}_h^0}  \int_e u  \cdot \Big(( \sum_{K\ni e} g_e^K n_e^K) \frac{1}{2}\sum_{K\ni e} f_e^K + ( \sum_{K\ni e} f_e^K n_e^K) \frac{1}{2}\sum_{K\ni e} g_e^K\Big) \,{\rm d}s\\
&=- \sum_{K \in \mathcal{T}_h} \int_K  fg \operatorname{div} u \, {\rm d}x \\
&\qquad  +\sum_{e \in \mathcal{E}_h^0}  \int_e u  \cdot \Big( \sum_{K\ni e} f_e^K g_e^K n_e^K- ( \sum_{K\ni e} g_e^K n_e^K) \frac{1}{2}\sum_{K\ni e} f_e^K - ( \sum_{K\ni e} f_e^K n_e^K) \frac{1}{2}\sum_{K\ni e} g_e^K\Big) \,{\rm d}s,
\end{align*}
where the sum $\sum_{K\ni e}$ is just the sum over the two simplices neighboring $e$. We have denoted by $n^K_e$ the unit normal vector to $e$ pointing outside $K$ and by $f^K_e, g^K_e$ the values of $f|_K$ and $g|_K$ on the hyperface $e$. Now for each $e\in \mathcal{E}^0_h$ and denoting simply by $1$ and $2$ the simplices $K_1,K_2$ with $K_1\cap K_2=e$, the last term in parenthesis can be written as
\begin{align*}
&f_e^1g_e^1n_e^1+ f_e^2g_e^2n_e^2- (g_e^1n_e^1+g_e^2n_e^2) \frac{1}{2}(f_e^1+f_e^2) - (f_e^1n_e^1+f_e^2n_e^2) \frac{1}{2}(g_e^1+g_e^2)\\
&= 0 + \frac{1}{2}\left(g_e^1 n_e^1 f_e^2 + g_e^2n_e^2f_e^1+ f_e^1n_e^1g_e^2+ f_e^2n_e^2g_e^1 \right)=0,
\end{align*}
since $n_e^2=-n_e^1$.\end{proof}

\medskip

From the previous result, we get a well-defined linear map
\begin{equation}\label{map_A}
\mathsf{A}: H_0(\dv,\Omega)\cap L^p(\Omega)^n\rightarrow \mathfrak{g}_h^r\subset L(V_h^r, V_h^r),\quad u\mapsto \mathsf{A}(u)=A_u,\quad p>2,
\end{equation}
with values in the Lie algebra $\mathfrak{g}_h^r=\{A\in L(V_h^r,V_h^r) \mid A\mathbf{1}=0\}$ of $G_h$. In the divergence free case, it restricts to
\[
\mathsf{A}: \mathring{H}(\dv,\Omega)\cap L^p(\Omega)^n\rightarrow \mathring{\mathfrak{g}}_h^r\subset L(V_h^r, V_h^r),
\]
with $\mathring{\mathfrak{g}}_h^r=\{A\in L(V_h^r,V_h^r) \mid A\mathbf{1}=0,\; \langle A f,g \rangle + \langle f, A g \rangle = 0,\;\forall f,g\in V^r_h\}$ the Lie algebra of $\mathring{G}_h$; see \eqref{Gh}.

\subsection{Relation with Raviart-Thomas finite element spaces}

\begin{definition} For $r\geq0$ an integer, we define the subspace $S_h^r \subset \mathfrak{g}_h^r \subset L(V_h^r,V_h^r)$ as
\[
S_h^r:=\operatorname{Im}\mathsf{A}=  \{ A_u \in L(V_h^r,V_h^r) \mid u \in H_0(\operatorname{div}, \Omega)\}.
\]
\end{definition}

\begin{proposition}\label{important_prop} Let $r\geq0$ be an integer. The space $S_h^r\subset \mathfrak{g}^r_h$ is isomorphic to the Raviart-Thomas space of order $2r$
\[
RT_{2r}(\mathcal{T}_h) = \left\{ u \in H_0(\dv,\Omega) \mid  \left.u\right|_K \in (P_{2r}(K))^n + x P_{2r}(K), \, \forall K \in \mathcal{T}_h \right\}.
\]
An isomorphism is given by $u\in RT_{2r}(\mathcal{T}_h)\mapsto A_u \in S_h^r$.

Its inverse is given by
\begin{equation}\label{inverse_isomorphism}
A \in S^r_h \mapsto u= \sum_{K\in \mathcal{T}_h}  \sum_{\alpha} \boldsymbol{\phi}_K^\alpha \sum_j \langle A f^{\alpha,j}_K, g^{\alpha,j}_K\rangle  + \sum_{e\in  \mathcal{E}_h^0} \sum_{\beta} \boldsymbol{\phi}_e^\beta \sum_j \langle A f^{\beta,j}_e, g^{\beta,j}_e\rangle  \in RT_{2r}(\mathcal{T}_h).
\end{equation}
In this formula:
\begin{itemize}
\item $(f^{\alpha,j}_K, g^{\alpha,j}_K), (f^{\beta,j}_e, g^{\beta,j}_e)\in V_h^r\times V_h^r$ are such that the images of $\sum_j (f^{\alpha,j}_K, g^{\alpha,j}_K) \in V_h^r \otimes V_h^r$ and $\sum_j (f^{\beta,j}_e, g^{\beta,j}_e) \in V_h^r \otimes V_h^r$ under the map
\begin{equation}\label{surjective_map}
V_h^r\otimes V_h^r\rightarrow RT_{2r}(\mathcal{T}_h)^*, \quad f\otimes g  \longmapsto \sum_{K\in \mathcal{T}_h} (\nabla f \, g)|_K + \sum_{e \in \mathcal{T}_h} \llbracket f\rrbracket_e \{g\}_e 
\end{equation}
are $(\mathbf{p}^\alpha_K, 0)$ and $(0, p^\beta_e)$, respectively, which is a basis of the dual space $RT_{2r}(\mathcal{T}_h)^*$ adapted to the decomposition
\[
RT_{2r}(\mathcal{T}_h)^*=\sum_K P_{2r-1}(K)^n \oplus \sum_{e\in K} P_{2r}(e);
\]
\item $\boldsymbol{\phi}_K^\alpha$, $\boldsymbol{\phi}_e^\beta$ is a basis of $RT_{2r}(\mathcal{T}_h)$ dual to the basis $\mathbf{p}^\alpha_K$ and $p^\beta_e$ of $RT_{2r}(\mathcal{T}_h)^*$.
\end{itemize}
\end{proposition}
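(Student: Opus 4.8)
The plan is to reduce the whole proposition to one assertion: that the linear map of \eqref{surjective_map}, which I will write as $\Phi\colon V_h^r\otimes V_h^r\to RT_{2r}(\mathcal{T}_h)^*$, is surjective. The starting point is that formula \eqref{consistentapprox} says exactly $\langle A_u f,g\rangle=\langle\Phi(f\otimes g),u\rangle$ for all $f,g\in V_h^r$ and all $u\in RT_{2r}(\mathcal{T}_h)$ (with the jump term of \eqref{surjective_map} read with the sign of \eqref{consistentapprox}): the volume integrand $(\nabla f\,g)|_K$ lies in $(P_{2r-1}(K))^n$, and since $\llbracket f\rrbracket_e=(f_1-f_2)n_e$ is normal to $e$, the edge integrand pairs with $u$ only through $u\cdot n|_e\in P_{2r}(e)$; these are precisely the two families of degrees of freedom of $RT_{2r}(\mathcal{T}_h)$. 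Consequently $A_u$ depends on $u\in H_0(\dv,\Omega)\cap L^p(\Omega)^n$ only through its $RT_{2r}$ degrees of freedom, so $A_u=A_{\pi_h u}$ where $\pi_h$ is the canonical $RT_{2r}$ interpolant (well defined because the $L^p$ regularity, $p>2$, controls the normal trace). Since $\pi_h u\in RT_{2r}(\mathcal{T}_h)$ and conversely $RT_{2r}(\mathcal{T}_h)\subset H_0(\dv,\Omega)\cap L^p(\Omega)^n$, we get $S_h^r=\operatorname{Im}\mathsf{A}=\mathsf{A}(RT_{2r}(\mathcal{T}_h))$, i.e. $\mathsf{A}\colon RT_{2r}(\mathcal{T}_h)\to S_h^r$ is onto.

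Because the $L^2$ pairing on $V_h^r$ is nondegenerate, $A_u$ is determined by the bilinear form $(f,g)\mapsto\langle\Phi(f\otimes g),u\rangle$, so $u\mapsto A_u$ is injective on $RT_{2r}(\mathcal{T}_h)$ if and only if the transpose of $\Phi$ is injective, i.e. if and only if $\Phi$ is surjective. To prove surjectivity of $\Phi$ I would localize. For the edge blocks of $RT_{2r}(\mathcal{T}_h)^*$: taking $f$ supported on one simplex $K_1$ and $g$ on the neighbour $K_0$ across a face $e_0$ kills every term of \eqref{consistentapprox} except $\pm\tfrac12\int_{e_0}(u\cdot n)\,(f|_{K_1})(g|_{K_0})$; as $f|_{K_1},g|_{K_0}$ vary their traces on $e_0$ span $P_r(e_0)$, the products span $P_{2r}(e_0)$, so $\Phi$ hits the whole $e_0$-block and leaves the others untouched. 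For the interior block of a simplex $K$: taking $g$ supported on $K$ and $f$ equal to a global polynomial (so all jumps vanish), $\Phi(f\otimes g)=\big(g\nabla f\big)|_K$ sits in the $K$-block alone, and $g\nabla f$ ranges over $\{g\nabla f:f,g\in P_r(K)\}$.

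Thus everything comes down to the purely algebraic lemma that $\{g\nabla f:f,g\in P_r(K)\}$ spans $(P_{2r-1}(K))^n$; this is the main obstacle. Note that for $|\alpha|\le r$ one has $x^\alpha e_i=x^\alpha\nabla x_i$, so $(P_r(K))^n$ is covered, and $\nabla(fg)=f\nabla g+g\nabla f$ shows $\nabla P_{2r}(K)$ is covered too; but a dimension count shows $(P_r(K))^n+\nabla P_{2r}(K)$ is a proper subspace of $(P_{2r-1}(K))^n$ in general, so genuine products $g\nabla f$ with both factors of positive degree must be used. I would finish the lemma either by induction on the degree of the target monomial $x^\alpha e_i$, writing it as a combination of terms $g\,\nabla(x_i p)$ with $\deg p=r-1$ plus cross-terms in the other coordinate directions, and showing those cross-terms lie in the span; or, dually, by showing the $L^2(K)$-orthogonal complement $w$ of $\{g\nabla f\}$ in $(P_{2r-1}(K))^n$ is zero, first getting $w_i\perp P_r(K)$ (test $f=x_i$), then bootstrapping to $x^\beta w_i\perp P_r(K)$ for all $|\beta|\le r-1$, hence $w_i\perp P_{2r-1}(K)$ and $w_i=0$. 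Making this induction/bootstrap close cleanly — the cross-terms, and the boosted test functions, have the same degree as the target — is the delicate point I expect to fight with.

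Granting surjectivity of $\Phi$, the proof concludes quickly. Injectivity of $\mathsf{A}$ on $RT_{2r}(\mathcal{T}_h)$ follows from the duality above, so $u\mapsto A_u$ is a linear isomorphism $RT_{2r}(\mathcal{T}_h)\xrightarrow{\;\sim\;}S_h^r$, proving the first claim. For the inverse, surjectivity of $\Phi$ lets me pick, for the basis $\mathbf{p}_K^\alpha,p_e^\beta$ of $RT_{2r}(\mathcal{T}_h)^*$ dual to a chosen basis $\boldsymbol{\phi}_K^\alpha,\boldsymbol{\phi}_e^\beta$ of $RT_{2r}(\mathcal{T}_h)$, preimages $\sum_j f_K^{\alpha,j}\otimes g_K^{\alpha,j}$ and $\sum_j f_e^{\beta,j}\otimes g_e^{\beta,j}$ under $\Phi$. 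Then for $A=A_u$ we have $\sum_j\langle A f_K^{\alpha,j},g_K^{\alpha,j}\rangle=\langle\Phi(\sum_j f_K^{\alpha,j}\otimes g_K^{\alpha,j}),u\rangle=\langle\mathbf{p}_K^\alpha,u\rangle$, which is the $\boldsymbol{\phi}_K^\alpha$-coefficient of $u$, and similarly for the edge terms; summing reconstructs $u$, which is exactly \eqref{inverse_isomorphism}. Finally one notes \eqref{inverse_isomorphism} is linear in $A$ and, by this computation, independent of the chosen preimages, hence a genuine two-sided inverse.
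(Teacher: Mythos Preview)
Your reduction to surjectivity of $\Phi$ is the paper's strategy: the paper works with the adjoint $\mathsf{A}^*$ and shows that $\operatorname{Im}\mathsf{A}^*$ is exactly the span of the $RT_{2r}$ degrees of freedom, which is equivalent to your $\Phi$ being onto (the paper remarks after the proof that $\Phi$ is $I^*\circ\mathsf{A}^*$). Your edge-block localization coincides with the paper's; your interior-block localization with $f$ a global polynomial is a mild simplification over the paper's choice of $f,g$ both supported in $K$ followed by subtracting off the resulting boundary contributions.

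The algebraic lemma that $\{g\nabla f : f,g\in P_r(K)\}$ spans $P_{2r-1}(K)^n$ is, as you say, the whole content. The paper proves it (Lemma~\ref{lemma:PrgradPr}) only for $n\in\{2,3\}$, by explicit construction of each top-degree monomial $x^\alpha e_i$, and combines this with an induction on $r$ to handle the lower-degree pieces. The explicit construction avoids precisely the cross-term difficulty you anticipate: for $n=2$ one writes $x_1^a x_2^{2r-1-a}e_1$ either as $\tfrac{1}{r}x_1^{a-r+1}x_2^{2r-1-a}\nabla(x_1^r)$ when $a\ge r-1$, or as $\tfrac{1}{a+1}x_2^r\nabla(x_1^{a+1}x_2^{r-1-a})$ minus a single correction in the $e_2$-direction of the same shape. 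Your orthogonality bootstrap, by contrast, stalls at the first step beyond $f=x_i$: testing with $f=x_ix_j$ yields only the symmetric constraint $x_jw_i+x_iw_j\perp P_r(K)$, not $x_jw_i\perp P_r(K)$ on its own, and shifting the monomial into $g$ drops the degree of the space you test against. So the paper's direct construction is likely the easier route to closing this step.
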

\begin{proof} Let us consider the linear map $\mathsf{A}: H_0(\dv,\Omega)\rightarrow  L(V_h^r, V_h^r)$ defined in \eqref{map_A}. From a general result of linear algebra, we have $\operatorname{dim}(\operatorname{Im}\mathsf{A})=\operatorname{dim}(\operatorname{Im}\mathsf{A}^*)$, where $\mathsf{A}^*: L(V_h^r, V_h^r)^*\rightarrow H_0(\dv,\Omega)^*$ is the adjoint to $\mathsf{A}$. We have
\[
\operatorname{Im}\mathsf{A}^*= \left\{ \sum_{i=1}^N c_i \sigma_{f_i g_i} \in  H_0(\dv,\Omega)^*\;\Big|\;  N \in \mathbb{N},\, f_i,g_i \in V_h^r,\,  c_i \in \mathbb{R}, \, i=1,2,\dots,N  \right\},
\]
where the linear form $\sigma_{fg}:= \mathsf{A}^*(f\otimes g) \colon H_0(\dv,\Omega) \rightarrow \mathbb{R}$, is given by $\sigma_{fg}(u)=\langle f, A_u g \rangle$.

Now, the space $\operatorname{Im}\mathsf{A}^*$
is spanned by functionals of the form
\begin{equation} \label{functional1}
u \mapsto \int_e (u \cdot n) pq \, {\rm d}s, \quad p,q \in P_r(e), \, e \in \mathcal{E}_h^0,
\end{equation}
and
\begin{equation} \label{functional2}
u \mapsto \int_K (u \cdot \nabla q) p \, {\rm d}x \quad \, p,q \in P_r(K), \, K \in \mathcal{T}_h.
\end{equation}
This can be seen by choosing appropriate $f$ and $g$ in $\sigma_{fg}$ and using the definition \eqref{consistentapprox} of $A_u$.  Indeed, if we choose two adjacent simplices $K_1$ and $K_2$  and set $\left. f \right|_{K_1} = p \in P_r(K_1)$, $\left. g \right|_{K_2} = -2q \in P_r(K_2)$, $\left.f\right|_{\Omega \setminus K_1} = 0$, and $\left.g\right|_{\Omega \setminus K_2} = 0$, we get $\langle A_u f, g \rangle = \int_e (u \cdot n) p q \, {\rm d}s$ with $e=K_1 \cap K_2$.  Likewise, if we choose a simplex $K$ and set $\left. f \right|_K = q \in P_r(K)$, $\left. g \right|_K = p \in P_r(K)$, and $\left.f\right|_{\Omega \setminus K} = \left.g\right|_{\Omega \setminus K} = 0$, we get $\langle A_u f, g \rangle = \int_K (u \cdot \nabla q) p \, {\rm d}x + \frac{1}{2}\int_{\partial K} (u \cdot n) p q \, {\rm d}s$.  Taking appropriate linear combinations yields the functionals~(\ref{functional1}-\ref{functional2}).

Now observe that the functionals~(\ref{functional1}-\ref{functional2}) span the same space (see Lemmas~\ref{lemma:PrPr} and~\ref{lemma:PrgradPr} in Appendix~\ref{Appendix_C}) as the functionals
\[
u \mapsto \int_e (u \cdot n) p \, {\rm d}s, \quad p \in P_{2r}(e), \, e \in \mathcal{E}_h^0,
\]
and
\[
u \mapsto \int_K u \cdot p \, {\rm d}x \quad \, p \in P_{2r-1}(K)^n, \, K \in \mathcal{T}_h.
\] 
These functionals are well-known \cite{BrFo1991}: they are a basis for the dual of 
\[
RT_{2r}(\mathcal{T}_h) = \{ u \in H(\dv,\Omega) \mid \left.u \cdot n \right|_{\partial\Omega} = 0, \, \left.u\right|_K \in (P_{2r}(K))^n + x P_{2r}(K), \, \forall K \in \mathcal{T}_h \},
\]
often referred to as the ``degrees of freedom'' for $RT_{2r}(\mathcal{T}_h)$.
 
We thus have proven that $\operatorname{Im}\mathsf{A}^*$ is isomorphic to $RT_{2r}(\mathcal{T}_h) ^*$, and hence $S_h^r= \operatorname{Im}\mathsf{A}$ is isomorphic to $RT_{2r}(\mathcal{T}_h)$, all these spaces having the same dimensions. Now, let us consider the linear map $u\in RT_{2r}(\mathcal{T}_h)\rightarrow \mathsf{A}(u)=A_u\in S^r_h$. Since its kernel is zero, the map is an isomorphism.

\medskip

Consider now a basis ${\bf p}^\alpha_K$, $p^\beta_e$ of the dual space $RT_{2r}(\mathcal{T}_h)^*$ identified with $\sum_{K\in \mathcal{T}_h} P_{2r-1}(K)^n \oplus \sum_{e\in\mathcal{T}_h}P_{2r}(e)$, i.e., the collection $\{{\bf p}^\alpha_K\}$ is a basis of $\sum_{K\in \mathcal{E}_h} P_{2r-1}(K)^n$ and the collection $p^\beta_e$ is a basis of $\sum_{e\in\mathcal{T}_h}P_{2r}(e)$. There is a dual basis $\boldsymbol{\phi}_K^\alpha$, $\boldsymbol{\phi}_e^\beta$ of $RT_{2r}(\mathcal{T}_h)$ such that
\[
\langle ({\bf p}^\alpha_K, 0), \boldsymbol{\phi}_{K'}^{\alpha'}\rangle=\delta_{\alpha\alpha'}\delta_{KK'}\qquad \langle ({\bf p}^\alpha_K, 0), \boldsymbol{\phi}_{e}^{\beta}\rangle=0
\]
\[
\langle (0, p^\beta_e), \boldsymbol{\phi}_{e'}^{\beta'}\rangle=\delta_{\beta\beta'}\delta_{ee'}\qquad \langle (0, p^\beta_e), \boldsymbol{\phi}_{K}^{\alpha}\rangle=0,
\]
where
\[
\langle (\mathbf{p},p), u \rangle = \sum_K \int_K u \cdot \mathbf{p}\, {\rm d}x + \sum_e \int_e (u \cdot n)p\, {\rm d}s
\]
is the duality pairing between $RT_{2r}(\mathcal{T}_h)^*$ and $RT_{2r}(\mathcal{T}_h)$.

Choosing $f^{\alpha,j}_K, g^{\alpha,j}_K\in V_h^r$ and $f^{\beta,j}_e, g^{\beta,j}_e\in V_h^r$ such that
\[
\sum_j \nabla f^{\alpha,j}_K g^{\alpha,j}_K|_{K'}= \mathbf{p}^\alpha_K\delta_{KK'}, \qquad \sum_j \llbracket f^{\alpha,j}_K\rrbracket_e \{g^{\alpha,j}_K\}_e=0,
\]
\[
\sum_j \nabla f^{\beta,j}_e g^{\beta,j}_e|_K=0, \qquad  \sum_j \llbracket f^{\beta,j}_e  \rrbracket_{e'} \{g^{\beta,j}_e\}_{e'}=p_e^\beta\delta_{ee'},
\]
we have that
\[
\sum_j \langle A_u f^{\alpha,j}_K, g^{\alpha,j}_K\rangle \quad \text{and}\quad \sum_j \langle A_u f^{\beta,j}_e, g^{\beta,j}_e\rangle
\]
are exactly the degrees of freedom of $u$ relative to the basis ${\bf p}^\alpha_K$, $p^\beta_e$. Therefore, $u$ is expressed as
\[
u= \sum_K  \sum_\alpha \boldsymbol{\phi}_K^\alpha \sum_j \langle A_u f^{\alpha,j}_K, g^{\alpha,j}_K\rangle  + \sum_{e\in K} \sum_\beta \boldsymbol{\phi}_e^\beta \sum_j \langle A_u f^{\beta,j}_e, g^{\beta,j}_e\rangle 
\]
as desired.
\end{proof}

\medskip

Note that the map \eqref{surjective_map}, with $V_h^r\otimes V_h^r$ identified with $L(V_h^*, V_h^r)$ can be identified with the composition $I^* \circ \mathsf{A}^*$, where $I^*$ is the dual map to the inclusion $I: RT_{2r}(\mathcal{T}_h)\rightarrow H_0(\dv, \Omega)$. This map is surjective, from the preceding result.

\medskip

\begin{proposition} The kernel of the map $u \in H_0(\dv,\Omega)\cap L^p(\Omega)^n \mapsto \mathsf{A}(u)=A_u \in L(V_h^r, V_h^r)$, $p > 2$, is
\[
\operatorname{ker}\mathsf{A}= \{u \in H_0(\dv,\Omega)\cap L^p(\Omega) \mid\Pi_{2r}(u)=0\}= \operatorname{ker}\Pi_{2r},
\]
where $\Pi_{2r}: H_0(\dv,\Omega) \cap L^p(\Omega)^n\rightarrow RT_{2r}(\mathcal{T}_h)$ is the global interpolation operator defined by $\Pi_{2r}(v)|_K:= \Pi_{2r}^K(v|_K)$, with $\Pi_{2r}^K: H(\dv, K) \cap L^p(K)^n\rightarrow RT_{2r}(K)$ defined by the two conditions
\[
\int_e \big((u-\Pi_{2r}^K u)\cdot n\big) p\, {\rm d} s=0,\quad\text{for all $p\in P_{2r}(e)$, for all $e\in K$}
\]
and
\[
\int_K (u- \Pi_{2r}^K u)\cdot p\, {\rm d} x=0,\quad\text{for all $p\in P_{2r-1}(K)^n$}.
\]
\end{proposition}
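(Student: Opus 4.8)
The plan is to show that $\mathsf{A}$ factors through the interpolation operator, i.e. that $A_u = A_{\Pi_{2r}(u)}$ for every $u \in H_0(\dv,\Omega)\cap L^p(\Omega)^n$ with $p>2$, and then to invoke the injectivity of $\mathsf{A}$ on $RT_{2r}(\mathcal{T}_h)$ established in Proposition~\ref{important_prop}. (This will in fact prove the slightly stronger statement $\mathsf A = \iota\circ \Pi_{2r}$, where $\iota\colon RT_{2r}(\mathcal{T}_h)\xrightarrow{\sim} S_h^r$ is the isomorphism $u\mapsto A_u$ of Proposition~\ref{important_prop}.)

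First I would inspect the defining formula \eqref{consistentapprox} and observe that, for fixed $f,g \in V_h^r$, the scalar $\langle A_u f, g\rangle$ depends on $u$ only through the $RT_{2r}$ degrees of freedom of $u$. Indeed, on each simplex $K$ the vector field $(\nabla f)\,g$ restricted to $K$ lies in $(P_{2r-1}(K))^n$, so $\int_K (\nabla_u f) g \, {\rm d}x = \int_K u\cdot\big((\nabla f)g\big)\, {\rm d}x$ is determined by the volume moments $\int_K u\cdot \mathbf{p}\, {\rm d}x$, $\mathbf{p}\in P_{2r-1}(K)^n$. Likewise, on an interior face $e = K_1\cap K_2$ one has $\llbracket f\rrbracket = (f_1 - f_2)\,n_e$ with $n_e := n_1 = -n_2$, hence $u\cdot\llbracket f\rrbracket\{g\} = (u\cdot n_e)\,q$ with $q := \tfrac12(f_1-f_2)(g_1+g_2)\in P_{2r}(e)$, so $\int_e u\cdot\llbracket f\rrbracket\{g\}\, {\rm d}s$ is determined by the face moments $\int_e (u\cdot n)p\, {\rm d}s$, $p\in P_{2r}(e)$. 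Thus $\langle A_u f, g\rangle$ is a function of the interior-face moments and the volume moments of $u$ alone.

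Next I would note that $u$ and $\Pi_{2r}(u)$ share all $RT_{2r}$ degrees of freedom, which is precisely the content of the defining conditions of $\Pi_{2r}^K$ on every face $e\in K$ and every $K\in\mathcal{T}_h$; in particular they agree on the interior-face and volume moments used above, so the previous step yields $\langle A_u f, g\rangle = \langle A_{\Pi_{2r}(u)} f, g\rangle$ for all $f,g\in V_h^r$, i.e. $A_u = A_{\Pi_{2r}(u)}$. Here I would also record that $\Pi_{2r}(u)$ genuinely lies in $RT_{2r}(\mathcal{T}_h)$, i.e. that its normal trace vanishes on $\partial\Omega$: for a boundary face $e\subset\partial\Omega$, the condition $\left.u\cdot n\right|_{\partial\Omega}=0$ forces $\int_e(\Pi_{2r}(u)\cdot n)p\, {\rm d}s = \int_e(u\cdot n)p\, {\rm d}s = 0$ for all $p\in P_{2r}(e)$, and the normal trace of an $RT_{2r}(K)$-function on a face belongs to $P_{2r}(e)$.

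Finally, combining these facts: $A_u = 0$ if and only if $A_{\Pi_{2r}(u)} = 0$, and since $\mathsf{A}|_{RT_{2r}(\mathcal{T}_h)}\colon RT_{2r}(\mathcal{T}_h)\to S_h^r$ is an isomorphism by Proposition~\ref{important_prop}, this holds if and only if $\Pi_{2r}(u) = 0$. Hence $\ker\mathsf{A} = \ker\Pi_{2r}$. I expect the only delicate point to be the polynomial bookkeeping in the second paragraph — checking that the face and volume integrands appearing in \eqref{consistentapprox} land exactly in $P_{2r}(e)$ and $P_{2r-1}(K)^n$, so that they are already ``seen'' by the $RT_{2r}$ degrees of freedom — together with the boundary-trace reconciliation; alternatively one could bypass the factorization argument by repeating the spanning computation from the proof of Proposition~\ref{important_prop}, which identifies $\operatorname{Im}\mathsf{A}^*$ with the span of the $RT_{2r}$ degrees of freedom and hence $\ker\mathsf{A}$ with their common annihilator.
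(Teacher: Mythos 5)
Your proof is correct and rests on the same two ingredients as the paper's: the observation that the right-hand side of \eqref{consistentapprox} sees $u$ only through its $RT_{2r}$ degrees of freedom, and Proposition~\ref{important_prop}. The paper packages this slightly differently---it invokes the surjectivity of the map \eqref{surjective_map} to conclude directly that $\langle A_u f,g\rangle=0$ for all $f,g\in V_h^r$ if and only if all degrees of freedom of $u$ vanish, i.e.\ if and only if $\Pi_{2r}(u)=0$---whereas you route the same facts through the factorization $A_u=A_{\Pi_{2r}(u)}$ and the injectivity of $\mathsf{A}$ restricted to $RT_{2r}(\mathcal{T}_h)$, so the mathematical content is the same.
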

\begin{proof} For $u \in H_0(\dv,\Omega) \cap L^p(\Omega)^n$ we have $A_u=0$ if and only if $\langle A_u f, g\rangle=0$ for all $f, g\in V_h^r$. As we just commented above, the map \eqref{surjective_map} is surjective, hence from \eqref{consistentapprox} we see that $\langle A_u f, g\rangle=0$ for all $f, g\in V_h^r$ holds if and only if 
\[
\int_e (u \cdot n) p \, {\rm d}s=0, \quad \text{for all}\;\; p \in P_{2r}(e), \, e \in \mathcal{E}_h^0,
\]
and
\[
\int_K u \cdot p \, {\rm d}x=0, \quad \text{for all}\;\; p \in P_{2r-1}(K)^n, \, K \in \mathcal{T}_h.
\] 
This holds if and only if $\Pi_{2r}(u)=0$.
\end{proof}

\medskip

In particular for $u\in H_0(\dv,\Omega)\cap L^p(\Omega)$, $p>2$, there exists a unique $\bar u \in RT_{2r}(\mathcal{T}_h)$ such that $A_{\bar u}= A_u$. It is given by $\bar u = \Pi_{2r}(u)$.

\subsection{The lowest-order setting} \label{sec:loworder}

We now investigate the setting in which $r=0$ in order to connect with the previous works \cite{PaMuToKaMaDe2010} and \cite{BaGB2019} for both the incompressible and compressible cases. Enumerate the elements of $\mathcal{T}_h$ arbitrarily from 1 to $N$, and let $\{\psi_i\}_i$ be the orthogonal basis for $V_h^0$ given by
\[
\psi_i(x) = 
\begin{cases}
1, &\mbox{ if } x \in K_i, \\
0, &\mbox{ otherwise, }
\end{cases}
\]
where $K_i \in \mathcal{T}_h$ denotes the $i^{th}$ element of $\mathcal{T}_h$.  Relative to this basis, for $A\in \mathfrak{g}^0_h\subset L(V_h^0,V_h^0)$ we have
\[
A \Big( \sum_{j=1}^N f_j \psi_j \Big) = \sum_{i=1}^N \Big( \sum_{j=1}^N A_{ij} f_j \Big) \psi_i, \quad \forall f = \sum_{j=1}^N f_j \psi_j \in V_h,
\]
where
\begin{equation} \label{Aij}
A_{ij} = \frac{\langle \psi_i, A\psi_j \rangle}{\langle \psi_i, \psi_i \rangle} = \frac{1}{|K_i|} \langle \psi_i, A\psi_j \rangle.
\end{equation}

In what follows, we will abuse notation by writing $A$ for both the operator $A \in \mathfrak{g}_h^0$ and the matrix $A \in \mathbb{R}^{N \times N}$ with entries \eqref{Aij}.  It is immediate from \eqref{gh} that
\[
\mathring{\mathfrak{g}}^0_h=\Big\{ A \in \mathbb{R}^{N \times N} \;\Big|\; \sum_{j=1}^N A_{ij} = 0, \, \forall i, \text{ and } A^\mathsf{T}\Theta + \Theta A = 0\Big\},\quad \mathfrak{g}^0_h=\Big\{ A \in \mathbb{R}^{N \times N}  \;\Big|\;  \sum_{j=1}^N A_{ij} = 0\Big\}
\]
where $\Theta$ is a diagonal $N\times N$ matrix with diagonal entries $\Theta_{ii} = |K_i|$. These are the Lie algebras used in \cite{PaMuToKaMaDe2010} and \cite{BaGB2019}.

\medskip

The next lemma determines the subspace $S_h^0:= \operatorname{Im}\mathsf{A}$ in the case $r=0$. We write $j \in N(i)$ to indicate that $j \neq i$ and $K_i \cap K_j$ is a shared $(n-1)$-dimensional simplex.

\begin{lemma} \label{lemma:Aij}
If $A=A_u$ for some $u \in H_0(\dv, \Omega)\cap L^p(\Omega)^n$, $p>2$, then, for each $i$,
\begin{equation}\label{A_ij_order0}
\begin{aligned} 
A_{ij} &= -\frac{1}{2|K_i|} \int_{K_i \cap K_j} u \cdot n \, {\rm d}s, \quad j \in N(i),\\
A_{ii} &= \frac{1}{2|K_i|} \int_{K_i} \dv u \, {\rm d}x,
\end{aligned}
\end{equation}
and $A_{ij} = 0$ for all other $j$.
\end{lemma}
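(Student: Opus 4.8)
The plan is to compute $\langle \psi_i, A_u \psi_j \rangle$ directly from the defining formula \eqref{consistentapprox}, for every pair $(i,j)$, and then divide by $|K_i|$ according to \eqref{Aij}. Since $r=0$, each basis function $\psi_j$ is constant on $K_j$ and zero elsewhere, so $\nabla_u \psi_j = u\cdot\nabla\psi_j = 0$ on the interior of every simplex; the volume term $\sum_{K}\int_K (\nabla_u \psi_j)\psi_i\,{\rm d}x$ in \eqref{consistentapprox} therefore vanishes identically. Hence only the edge term survives:
\[
\langle \psi_i, A_u\psi_j\rangle = -\sum_{e\in\mathcal{E}_h^0}\int_e u\cdot\llbracket \psi_j\rrbracket\,\{\psi_i\}\,{\rm d}s.
\]

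The next step is to identify, for fixed $i$ and $j$, which interior edges $e$ contribute. The factor $\{\psi_i\} = \tfrac12(\psi_i|_{K}+\psi_i|_{K'})$ on $e = K\cap K'$ is nonzero only if $e$ borders $K_i$; the factor $\llbracket\psi_j\rrbracket$ is nonzero only if $e$ borders $K_j$. So for $j\in N(i)$ the only contributing edge is $e = K_i\cap K_j$, on which $\llbracket\psi_j\rrbracket = \psi_j|_{K_j}\,n_j = n^{K_j}_e$ (the unit normal pointing out of $K_j$, i.e.\ into $K_i$) and $\{\psi_i\} = \tfrac12\psi_i|_{K_i} = \tfrac12$; orienting $n$ as the normal to $K_i\cap K_j$ (say pointing out of $K_i$) flips a sign, giving $\langle\psi_i,A_u\psi_j\rangle = -\tfrac12\int_{K_i\cap K_j} u\cdot(-n)\,{\rm d}s = \tfrac12\int_{K_i\cap K_j}u\cdot n\,{\rm d}s$ — one then tracks the sign convention carefully to land on $A_{ij} = -\tfrac{1}{2|K_i|}\int_{K_i\cap K_j}u\cdot n\,{\rm d}s$ as stated, where $n$ points from $K_i$ to $K_j$. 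If $j\neq i$ and $j\notin N(i)$, then $K_i$ and $K_j$ share no $(n-1)$-face, so no interior edge borders both and the sum is empty, giving $A_{ij}=0$.

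For the diagonal entry, set $j=i$: then $\llbracket\psi_i\rrbracket$ and $\{\psi_i\}$ are both supported only on edges bordering $K_i$, and on such an edge $e\subset\partial K_i\cap\partial K'$ we have $\llbracket\psi_i\rrbracket = n^{K_i}_e$ and $\{\psi_i\} = \tfrac12$, so each such edge contributes $-\tfrac12\int_e u\cdot n^{K_i}_e\,{\rm d}s$. Summing over all faces of $K_i$ — here I use that $u\cdot n|_{\partial\Omega}=0$, so boundary faces of $K_i$ contribute nothing and the sum over interior faces equals the full sum over $\partial K_i$ — and applying the divergence theorem on $K_i$ gives $\langle\psi_i,A_u\psi_i\rangle = -\tfrac12\int_{\partial K_i} u\cdot n\,{\rm d}s = -\tfrac12\int_{K_i}\dv u\,{\rm d}x$. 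Dividing by $|K_i|$ — wait, this produces a sign opposite to the claimed $A_{ii} = +\tfrac{1}{2|K_i|}\int_{K_i}\dv u\,{\rm d}x$, so the normalization in \eqref{Aij} or the normal-orientation convention must be reconciled; the cleanest cross-check is to verify the row-sum identity $\sum_j A_{ij} = 0$ from $A_u\mathbf{1}=0$ (Proposition \ref{A_u_properties}) against $\sum_{j\in N(i)} A_{ij} + A_{ii}$, which forces the signs to be mutually consistent and pins down the stated formulas. The main obstacle here is purely bookkeeping: keeping the orientations of $n_e^K$, the jump $\llbracket\cdot\rrbracket$, and the average $\{\cdot\}$ mutually consistent so the final signs match the statement — there is no analytic difficulty, since everything is a finite sum over faces of a single simplex.
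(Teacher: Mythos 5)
Your approach is the same as the paper's: substitute the basis functions $\psi_i,\psi_j$ into \eqref{consistentapprox}, note that the volume term vanishes for piecewise constants, and evaluate the single surviving face integral; your identification of the contributing faces and of $\llbracket\psi_j\rrbracket$ and $\{\psi_i\}$ is correct. On the sign question you raise at the end: trust your direct computation rather than trying to bend the orientation bookkeeping to reproduce the printed signs. One finds $\langle A_u\psi_j,\psi_i\rangle=-\tfrac12\int_{K_i\cap K_j}u\cdot n\,{\rm d}s$ with $n$ the normal pointing \emph{out of $K_j$ into $K_i$} (so the orientation is the opposite of the one you assert), and $\langle A_u\psi_i,\psi_i\rangle=-\tfrac12\int_{K_i}\dv u\,{\rm d}x$, i.e.\ the diagonal entry carries a minus sign. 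The cross-check you propose is exactly the right instinct, but it settles the matter in favor of the minus sign, not the printed plus: the row-sum identity $\sum_jA_{ij}=0$ applied to your (correct) off-diagonal values gives $A_{ii}=-\tfrac{1}{2|K_i|}\int_{\partial K_i}u\cdot n_{\rm out}\,{\rm d}s=-\tfrac{1}{2|K_i|}\int_{K_i}\dv u\,{\rm d}x$, and the same sign follows independently from \eqref{first_statement} with $f=g=\psi_i$, which yields $2\langle A_u\psi_i,\psi_i\rangle=-\int_{K_i}\dv u\,{\rm d}x$. So the internally consistent version of \eqref{A_ij_order0} is the one your computation produces (keep the off-diagonal formula with $n$ read as pointing from $K_j$ to $K_i$, and put a minus sign on the $\dv u$ term); your argument is otherwise complete and there is no gap beyond this bookkeeping.
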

\begin{proof}
Let $j \in N(i)$ and consider the expression \eqref{consistentapprox} with $f=\psi_j$ and $g=\psi_i$.  All terms vanish except one, giving
\begin{align*}
\langle A\psi_j, \psi_i \rangle 
&= -\int_{K_i \cap K_j} u \cdot \llbracket \psi_j \rrbracket \{\psi_i \} \, {\rm d}s \\
&= -\frac{1}{2} \int_{K_i \cap K_j} u \cdot n \, {\rm d}s.
\end{align*}
Now consider the case in which $i=j$.  Let $\mathcal{E}^0(K_i)$ denote the set of $(n-1)$-simplices that are on the boundary of $K_i$ but in the interior of $\Omega$.  Since $u \cdot n = 0$ on $\partial\Omega$,
\begin{align*}
\langle A\psi_i, \psi_i \rangle 
&= - \sum_{e \in \mathcal{E}^0(K_i)} \int_e u \cdot \llbracket \psi_i \rrbracket \{\psi_i \} \, {\rm d}s, \\
&= \int_{\partial K_i} u \cdot n \frac{1}{2} \, {\rm d}s \\
&= \frac{1}{2} \int_{K_i} \dv u \, {\rm d}x.
\end{align*}
The expressions in \eqref{A_ij_order0} follow from \eqref{Aij}.
Finally, if $j \neq i$ and $j \notin N(i)$, then all terms in \eqref{consistentapprox} vanish when $f=\psi_j$ and $g=\psi_i$.
\end{proof}

\medskip

\begin{remark}\label{case_r=0}{\rm The expressions in \eqref{A_ij_order0} recover the relations  between Lie algebra elements and vector fields used in \cite{PaMuToKaMaDe2010} and \cite{BaGB2019}. In particular, in the incompressible case, using also \eqref{second_statement} in Proposition \ref{A_u_properties}, we have
\[
\operatorname{Im}\mathsf{A}=\big \{A\in \mathring{\mathfrak{g}}^0_h \mid A_{ij}=0,\;\forall j\in N(i)\big\}
\]
which is the nonholonomic constraint used in \cite{PaMuToKaMaDe2010}. Similarly, in the compressible case, using \eqref{first_statement} in Proposition \ref{A_u_properties}, we have
\[
\operatorname{Im}\mathsf{A}= \big\{A\in \mathfrak{g}^0_h \mid A_{ij}=0,\;\forall j\in N(i),\;\; A^\mathsf{T}\Theta + \Theta A\text{ is diagonal} \big\}
\]
which is the nonholonomic constraint used in \cite{BaGB2019}.  By Proposition~\ref{important_prop}, we have $\operatorname{Im}\mathsf{A} \simeq RT_{2r}(\mathcal{T}_h) = RT_0(\mathcal{T}_h)$ in the compressible case.  This is reflected in~(\ref{A_ij_order0}): every off-diagonal entry of $A \in \operatorname{Im}\mathsf{A}$ corresponds to a degree of freedom $\int_e(u\cdot n){\rm d}s$, $e\in \mathcal{E}^0_h$, for $RT_0(\mathcal{T}_h)$ (and every diagonal entry of $A$ is a linear combination thereof).}
\end{remark}

\section{The Lie algebra-to-vector fields map}\label{sec_Lie_to_vector}

In this section we define a Lie algebra-to-vector fields map that associates to a matrix $A\in L(V_h^r,V_h^r)$ a vector field on $\Omega$. Such a map is needed to define in a general way the semidiscrete Lagrangian associated to a given continuous Lagrangian.

Since any $A\in S_h^r$ is associated to a unique vector field $u \in RT_{2r}(\mathcal{T}_h)$, one could think that the correspondence $A\in S^h_r\rightarrow u\in RT_{2r}(\mathcal{T}_h)$ can be used as a Lie algebra-to-vector fields map. However, as explained in detail in Appendix \ref{Appendix_B}, the Lagrangian must be defined on a larger space than the constraint space $S_h^r$, namely, at least on $S_h^r+[S_h^r,S_h^r]$. This is why such a Lie algebra-to-vector fields map is needed.

\begin{definition} For $r\geq 0$ an integer, we consider the Lie algebra-to-vector field map $\widehat{\;\;}: L(V_h^r, V_h^r) \rightarrow [V_h^r]^n$ defined by
\begin{equation}\label{hat}
\widehat{A}:= \sum_{k=1}^n A (I_h^r(x^k)) e_k,
\end{equation}
where $I_h^r:L^2(\Omega) \rightarrow V_h^r$ is the $L^2$-orthogonal projector onto $V_h^r$, $x^k:\Omega\rightarrow\mathbb{R}$ are the coordinate maps, and $e_k$ the canonical basis for $\mathbb{R}^n$.
\end{definition}

\medskip

The idea leading to the definition \eqref{hat} is the following. On one hand the component $u^k$ of a general vector field $u= \sum_k u^ke_k$, can be understood as the derivative of the coordinate function $x^k$ in the direction $u$, i.e. $u^k= \nabla_u x^k$. On the other hand, from the definition of the discrete diffeomorphism group, the linear map $f\mapsto Af$ for $f\in V_h^r$ is understood as a derivation, hence \eqref{hat} is a natural candidate for a Lie algebra-to-vector field map. We shall study its properties below, after describing in more detail in the next lemma the expression \eqref{hat} for $r=0$.

\medskip

\begin{lemma}\label{hat_0} For $r=0$ and $A\in \mathfrak{g}_h^0\subset L(V_h^0,V_h^0)$, $\widehat{A}$ is the vector field constant on each simplex, given on simplex $K_i$ by
\begin{equation} \label{hatAlow}
\widehat{A} |_{K_i}= \sum_j (b_j - b_i) A_{ij},
\end{equation}
where $b_i = \frac{1}{|K_i|}\int_{K_i} x \, {\rm d}x$
denotes the barycenter of $K_i$.
\end{lemma}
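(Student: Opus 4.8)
The plan is to unwind the definition \eqref{hat} in the case $r=0$ and to invoke the defining property $A\mathbf{1}=0$ of elements of $\mathfrak{g}_h^0$ only at the very end. First I would identify the $L^2$-orthogonal projection $I_h^0(x^k)$ onto the space $V_h^0$ of piecewise constants: since the orthogonal projector onto constants on a simplex $K$ is the cell average, $I_h^0(x^k)=\sum_i\big(\tfrac{1}{|K_i|}\int_{K_i}x^k\,{\rm d}x\big)\psi_i=\sum_i (b_i)^k\psi_i$, where $(b_i)^k$ is the $k$-th coordinate of the barycenter $b_i$ of $K_i$ and $\psi_i$ is the indicator function of $K_i$ from \S\ref{sec:loworder}.

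Next I would apply the matrix representation of $A$ recalled right after \eqref{Aij}, namely $A\big(\sum_j f_j\psi_j\big)=\sum_i\big(\sum_j A_{ij}f_j\big)\psi_i$. This gives $A\big(I_h^0(x^k)\big)=\sum_i\big(\sum_j A_{ij}(b_j)^k\big)\psi_i$, which is constant on each simplex; restricting to $K_i$ and reassembling the components into a vector yields $\widehat{A}|_{K_i}=\sum_k\big(\sum_j A_{ij}(b_j)^k\big)e_k=\sum_j A_{ij}\,b_j$. In particular $\widehat{A}$ is indeed constant on each simplex, which is the first assertion of the lemma.

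The only place where $A\in\mathfrak{g}_h^0$ enters is the last step: since $A\mathbf{1}=0$ is equivalent to $\sum_j A_{ij}=0$ for every $i$, we may subtract the vanishing term $b_i\sum_j A_{ij}$ to obtain $\widehat{A}|_{K_i}=\sum_j A_{ij}b_j-b_i\sum_j A_{ij}=\sum_j (b_j-b_i)A_{ij}$, which is exactly \eqref{hatAlow}.

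I do not anticipate any real obstacle; the argument is a direct substitution. The two points worth spelling out carefully are the identification of the $L^2$-projection onto $V_h^0$ with the cell average (so that $I_h^0(x^k)|_{K_i}=(b_i)^k$), and the observation that the row-sum condition $\sum_j A_{ij}=0$ is precisely what converts the raw expression $\sum_j A_{ij}b_j$ into the translation-invariant difference form $\sum_j (b_j-b_i)A_{ij}$ — as one should expect, since a genuine vector field must be insensitive to a shift of the coordinate origin.
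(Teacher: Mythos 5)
Your proposal is correct and follows essentially the same route as the paper's proof: identify $I_h^0(x^k)$ with the piecewise barycenter coordinates, apply the matrix representation of $A$ in the basis $\{\psi_i\}$, and use the row-sum condition $\sum_j A_{ij}=0$ to pass from $\sum_j A_{ij}b_j$ to $\sum_j (b_j-b_i)A_{ij}$. No gaps.
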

\begin{proof}
The $L^2$-projection of the coordinate function $x^k$ onto $V_h$ is given by
\begin{align*}
I_h^0( x^k) 
&= \sum_j \psi_j \frac{1}{|K_j|} \int_{K_j} x^k = \sum_j \psi_j (b_j)_k,
\end{align*}
where $(b_j)_k$ denotes the $k^{th}$ component of $b_j$.  Hence,
\begin{align*}
\widehat{A} 
&= \sum_{k=1}^n (A(I_h x^k)) e_k 
= \sum_{k=1}^n \sum_j (b_j)_k e_k A \psi_j 
= \sum_j b_j A \psi_j \\
&= \sum_j b_j \sum_i A_{ij} \psi_i 
= \sum_i \psi_i \sum_j b_j A_{ij} 
= \sum_i \psi_i \sum_j (b_j - b_i) A_{ij},
\end{align*}
where the last equality follows from the fact that $\sum_j A_{ij} = 0$ for every $i$.
\end{proof}

\medskip

\begin{proposition}\label{hat_A_u}
For $u\in H_0(\dv,\Omega) \cap L^p(\Omega)$, $p>2$ and $r\geq 0$, we consider $A_u \in L(V_h^r, V_h^r)$ defined in \eqref{consistentapprox}.
\begin{itemize}
\item If $r\geq 1$, then for all $u\in H_0(\dv, \Omega)\cap L^p(\Omega)$, $p>2$, we have 
\[
(\widehat{A_u})^k= I_h^r(u^k),\quad k=1,...,n.
\]
In particular, if $u$ is such that $u|_K \in P_r(K)^n$ for all $K$, then $\widehat{A_u}= u$.
\item If $r=0$, then
\[
\widehat{A_u}|_K = \frac{1}{2|K|}\sum_{e\in K}\int_e u \cdot n_{e_-} (b_{e_+}-b_{e_-}){\rm d}s
\]
where $n_{e_-}$ is the normal vector field pointing from $K_-$ to $K_+$ and $b_{e_\pm}$ are the barycenters of $K_\pm$.
In particular, if $u \in RT_0(\mathcal{T}_h)$, then
\[
\widehat{A_u}|_K = \frac{1}{2|K|}\sum_{e\in K}|e| u \cdot n_{e_-} (b_{e_+}-b_{e_-}).
\]
More particularly,  if $u \in RT_0(\mathcal{T}_h)$ and the triangles are regular
\[
\widehat{A_u}=u.
\]
\end{itemize}
\end{proposition}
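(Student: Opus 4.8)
The plan is to treat the two regimes separately, using a direct computation with \eqref{consistentapprox} when $r\ge 1$ and the lowest-order lemmas (Lemma~\ref{hat_0} and Lemma~\ref{lemma:Aij}) when $r=0$. \emph{For $r\ge 1$}, the key observation is that each coordinate map $x^k$ is a global affine function, so $x^k|_K\in P_1(K)\subset P_r(K)$ for every $K$; hence $x^k\in V_h^r$ and $I_h^r(x^k)=x^k$. Consequently $\llbracket x^k\rrbracket=0$ on every interior face and $\nabla_u x^k=u\cdot\nabla x^k=u^k$, so inserting $f=x^k$ into \eqref{consistentapprox} gives, for every $g\in V_h^r$,
\[
\langle A_u(I_h^r(x^k)),g\rangle=\langle A_u x^k,g\rangle=\sum_{K\in\mathcal T_h}\int_K u^k g\,{\rm d}x=\langle u^k,g\rangle .
\]
Since this is exactly the characterization of the $L^2$-projection, $A_u(I_h^r(x^k))=I_h^r(u^k)$, which by \eqref{hat} is the $k$-th component of $\widehat{A_u}$. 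If moreover $u|_K\in P_r(K)^n$ for all $K$, then $u^k\in V_h^r$, so $I_h^r(u^k)=u^k$ and $\widehat{A_u}=u$.

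\emph{For $r=0$}, Proposition~\ref{A_u_properties} gives $A_u\mathbf 1=0$, so Lemma~\ref{hat_0} applies and $\widehat{A_u}|_{K_i}=\sum_j(b_j-b_i)(A_u)_{ij}$. The term $j=i$ vanishes, and by Lemma~\ref{lemma:Aij} the only surviving terms have $j\in N(i)$, where $(A_u)_{ij}=-\tfrac{1}{2|K_i|}\int_{K_i\cap K_j}u\cdot n\,{\rm d}s$ with $n$ the face normal. Re-indexing the sum over $j\in N(i)$ as a sum over the faces $e$ of $K_i$, discarding the faces $e\subset\partial\Omega$ (on which $u\cdot n=0$), and reconciling the orientation conventions (the integrand $u\cdot n_{e_-}(b_{e_+}-b_{e_-})$ is insensitive to which side is labelled $+$) produces the first claimed formula. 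If in addition $u\in RT_0(\mathcal T_h)$, then on each face $e$ the normal component $u\cdot n_e$ is a single-valued constant — because $u|_K\in P_0(K)^n+xP_0(K)$ and $x\cdot n$ is constant on the hyperplane containing $e$, while $H(\dv)$-conformity makes the trace two-sided — so $\int_e u\cdot n_{e_-}\,{\rm d}s=|e|\,u\cdot n_{e_-}$, giving the second formula.

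\emph{For a mesh of regular triangles}, the neighbour $K_e$ of an equilateral triangle $K$ across an edge $e$ is the mirror image of $K$ in the line of $e$; hence $b_{K_e}-b_K=2d_e\,n_e=\tfrac{4|K|}{3|e|}\,n_e$, where $n_e$ is the outward unit normal and $d_e=\tfrac13 h_e=\tfrac{2|K|}{3|e|}$ is the distance from $b_K$ to $e$. Substituting into the formula for $r=0$ and $u\in RT_0$ collapses it to $\widehat{A_u}|_K=\tfrac23\sum_{e\in K}(u\cdot n_e)\,n_e$. Writing $u|_K(x)=a+c(x-b_K)$ with $a=u(b_K)$ and $c=\tfrac12\dv u|_K$, we have $u\cdot n_e=a\cdot n_e+c\,d_e$ on $e$; the elementary identities for the outward unit normals of a regular triangle, $\sum_{e\in K}n_e=0$ and $\sum_{e\in K}n_e n_e^{\mathsf T}=\tfrac32\,\mathrm{Id}$ (together with $d_e$ being the same for all three edges), kill the $c$-term and give $\widehat{A_u}|_K=\tfrac23\cdot\tfrac32\,a=a=u(b_K)$, i.e.\ $\widehat{A_u}=u$.

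I expect the only genuinely delicate point to be bookkeeping: getting the several normal-orientation conventions ($\llbracket\cdot\rrbracket$, $n_{e_-}$, and the neighbour index $N(i)$ in Lemma~\ref{lemma:Aij}) to combine into the stated symmetric expression, and, for the last item, correctly assembling the standard facts about a regular simplex — neighbours are reflections across the shared facet, the barycenter-to-facet distance is $\tfrac13 h_e$, and the first and second moments of the unit facet normals are $0$ and a multiple of $\mathrm{Id}$. None of these steps is hard once the conventions are fixed; the substance is entirely contained in the earlier results.
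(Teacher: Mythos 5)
Your proof is correct and follows essentially the same route as the paper: the $r\ge 1$ case is the identical computation $\langle A_u x^k,g\rangle=\langle u^k,g\rangle$, and your $r=0$ derivation via Lemma~\ref{hat_0} and Lemma~\ref{lemma:Aij} is precisely the alternative the paper itself points out as equivalent to its direct computation. Your regular-triangle step is in fact more careful than the paper's one-line assertion $\tfrac23\sum_e(u\cdot n_e)n_e=u$, since writing $u|_K=a+c(x-b_K)$ and using $\sum_e n_e=0$, $\sum_e n_e n_e^{\mathsf T}=\tfrac32\,\mathrm{Id}$ makes explicit that the piecewise-constant field $\widehat{A_u}$ recovers $u$ at the barycenters.
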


As a consequence, we also note that for $r\geq 1$ and $u\in H_0(\dv,\Omega)\cap L^p(\Omega)$, $p>2$ :
\[
\widehat{A_u}=u\;\;\Leftrightarrow\;\; u|_K \in P_r(K)^n,\;\forall K.
\]

\begin{proof} When $r\geq 1$, we have $I_h ^r(x^k)=x^k$, hence $\widehat{A_u} (x)=\sum_{k=1}^n (A_u x^k)(x)e_k$. We compute $A_u x^k\in V_h^r$ as follows: for all $g \in V_h^r$, we have
\[
\langle A_u x^k, g \rangle= \sum_{K\in \mathcal{T}_h}\int_K (\nabla x^k \cdot u )g{\rm d} x- \sum_{e\in \mathcal{E}^0_h}\int_e u \cdot \llbracket x^k\rrbracket_e \{g\}_e{\rm d}s= \int_\Omega u^k g {\rm d}x.
\]
Since this is true for all $g\in V_h^r$ and since $A_u x^k$ must belong to $V_h^r$, we have
\[
(\widehat{A_u})^k = A_u x^k= I_h ^r(u^k),
\]
as desired.

When $r=0$, we have $I_h^0( f)|_{K_i}= \frac{1}{|K_i|}\int_{K_i} f(x) {\rm d }x$ hence $I_h^0(x^k)|_{K_i}= (b_i)^k$.  We compute $A_u I _h^0(x^k)\in V_h^0$ as follows: for all $g \in V_h^0$, we have
\begin{align*}
\langle A_u  I _h^0(x^k), g\rangle &= \sum_{K_i\in\mathcal{T}_h} \int_{K_i}(\nabla (b_i)^k\cdot u ) g {\rm d}x- \sum_{e\in \mathcal{E}_h^0} \int_e u \cdot \left( b_{e_+}^k n_{e_+}+b_{e_-}^k n_{e_-}\right)\{g\}_e{\rm d}s \\
&= 0 - \sum_{e\in \mathcal{E}_h^0} \int_e u \cdot \left( b_{e_+}^k n_{e_+}+b_{e_-}^k n_{e_-}\right)\frac{1}{2} (g_{e_+}+ g_{e_-}) {\rm d}s\\
&= - \sum_{e\in \mathcal{E}_h^0} \int_e u \cdot \left( b_{e_+}^k n_{e_+}+b_{e_-}^k n_{e_-}\right){\rm d}s \left(\frac{1}{2|K_{e_+}|} \int_{K_{e_+}}g{\rm d} x + \frac{1}{2|K_{e_-}|} \int_{K_{e_-}}g{\rm d} x \right)\\
&= -\sum_K \sum_{e\in K} \int_e u \cdot \left( b_{e_+}^k n_{e_+}+b_{e_-}^k n_{e_-}\right){\rm d}s\frac{1}{2|K|} \int_{K}g{\rm d} x
\end{align*}
hence we get
\[
A_u  I _h^0(x^k)|_K= \frac{1}{2|K|} \sum_{e\in K} \int_e u \cdot n_{e_-}\left( b_{e_+}^k -b_{e_-}^k\right){\rm d}s
\]
from which the result follows. This result can be also obtained by combining the results of Proposition \ref{lemma:Aij} and Lemma \ref{hat_0}.

In 2D, for the case of a regular triangle, we have $b_{e_+}-b_{e_-}= n_{e_-} \frac{2}{3}H$, where $H$ is the height, and $|K|= \frac{1}{2}|e| H$ so we get
\[
\widehat{A_u}|_K = \frac{1}{2|K|}\frac{2}{3}H\sum_{e\in K}|e| (u \cdot n_{e_-})n_{e_-}= \frac{2}{3}\sum_{e\in K} (u \cdot n_{e_-})n_{e_-}=u.
\]
Similar computations hold in 3D.
\end{proof}

\medskip

\begin{proposition} \label{prop:hatbracket} For all $u,v\in H_0(\dv, \Omega)\cap L^p(\Omega)$, $p>2$, and $r\geq 1$, we have
\[
\langle \widehat{[A_u, A_v]}^k, g \rangle = \sum_K \int_K ( \nabla \bar v ^k \cdot u - \nabla \bar u^k \cdot v) g {\rm d}x - \sum_{e\in \mathcal{E}^0_h} \int_e\big(u \cdot n [\bar v ^k] - v\cdot n [\bar u ^k] \big)\{g\}{\rm d}s,
\]
for $k=1,...,n$, for all $g\in V_h^r$, where $\bar u ^k= I_h ^r(u^k)\in V_h^r$ and $\bar v^k = I_h^r( v^k)\in V_h^r$. The convention is such that if $n$ is pointing from $K_-$ to $K_+$, then $[\bar v ^k]=\bar v^k_- - \bar v^k_+$.

So, in particular if $u|_K, v|_K\in P_r(K)$, then
\[
\langle \widehat{[A_u, A_v]}^k, g \rangle = \sum_K \int_K [u,v]^k g {\rm d}x - \sum_{e\in \mathcal{E}^0_h} \int_e\big(u \cdot n [ v ^k] - v\cdot n [ u ^k]\big)\{g\}{\rm d}s.
\]

If $u,v,w\in H_0(\dv, \Omega)\cap L^p(\Omega)$, $p>2$ and $u|_K, v|_K, w|_K \in P_r(K)$, we have
\begin{align*}
\int_\Omega\widehat{[A_u, A_v]}\cdot  \widehat{A_w}\,{\rm d}x &= \sum_{k=1}^n \langle \widehat{[A_u, A_v]}^k, \widehat{A_w}^k\rangle\\
& =\sum_K \int_K  [ u,v]\cdot w\, {\rm d}x - \sum_{e\in \mathcal{E}^0_h} \int_e (n \times \{w\}) \cdot [u \times v] {\rm d}s.
\end{align*}

For $r=0$, and $u,v \in H_0(\dv,\Omega)\cap L^p(\Omega)$, $p>2$ such that $u|_K, v|_K\in P_0(K)$, then $\widehat{[A_u,A_v]}\in [V_h^0]^n$ is the vector field constant on each simplex $K$, given on $K$ by
\[
\widehat{[A_u,A_v]}|_K= \frac{1}{2|K|} \sum_{e\in K} |e| \Big(u \cdot n_{e_-}( c[v]_{e_+}- c[v]_{e_-}) - v \cdot n_{e_-}( c[u]_{e_+}- c[u]_{e_-})\Big),
\]
where $c[u]\in [V_h^0]^n$ is the vector field constant on each simplex $K$, given on $K$ by
\[
c[u]_K = \frac{1}{2|K|}\sum_{e\in K}|e| u \cdot n_{e_-} (b_{e_+}-b_{e_-})
\]
similarly for $c[v]\in [V_h^0]^n$.
\end{proposition}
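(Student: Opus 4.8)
The plan is to unwind the definition \eqref{hat} of the hat map and reduce every assertion to formulas already established for $A_u$ acting on $V_h^r$. The observation that drives everything, valid for arbitrary $A,B\in L(V_h^r,V_h^r)$ and any $r\geq 0$, is that
\[
\widehat{[A,B]}^k=[A,B]\big(I_h^r(x^k)\big)=A\big(\widehat{B}^k\big)-B\big(\widehat{A}^k\big),\qquad k=1,\dots,n,
\]
since $\widehat{B}^k=B(I_h^r(x^k))\in V_h^r$ by definition of the hat map. Specializing to $A=A_u$, $B=A_v$ with $r\geq 1$, so that $I_h^r(x^k)=x^k$ and (the computation being exactly that in the proof of Proposition~\ref{hat_A_u}) $A_u x^k=I_h^r(u^k)=\bar u^k$, gives $\widehat{[A_u,A_v]}^k=A_u\bar v^k-A_v\bar u^k$. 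Applying the defining identity \eqref{consistentapprox} of $A_u$ with $f=\bar v^k$ and an arbitrary $g\in V_h^r$, using $u\cdot\llbracket\bar v^k\rrbracket=(u\cdot n)[\bar v^k]$ with the stated orientation convention, then doing the same for $A_v\bar u^k$ and subtracting, produces the first displayed formula. The second formula is the specialization to $u|_K,v|_K\in P_r(K)$, where $\bar u^k=u^k$, $\bar v^k=v^k$, and $u\cdot\nabla v^k-v\cdot\nabla u^k=[u,v]^k$ by definition of the Lie bracket of vector fields.

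For the third formula I would take $g=\widehat{A_w}^k$ in the second one; since $w|_K\in P_r(K)$ and $r\geq 1$, Proposition~\ref{hat_A_u} yields $\widehat{A_w}=w$, hence $g=w^k\in V_h^r$. Summing over $k$, the interior integrals assemble into $\sum_K\int_K[u,v]\cdot w\,{\rm d}x$, and there remains the pointwise identity, on each interior face $e$ with unit normal $n$,
\[
\sum_{k=1}^n\big((u\cdot n)[v^k]-(v\cdot n)[u^k]\big)\{w^k\}=(n\times\{w\})\cdot[u\times v].
\]
This is a purely algebraic identity among the traces of $u,v,w$ from the two sides of $e$. The crucial point is that, because $u,v,w\in H(\dv,\Omega)$, the normal components $u\cdot n$, $v\cdot n$, $w\cdot n$ are single-valued on $e$, so only the tangential parts jump. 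Writing each trace as a normal part plus a tangential part, the left-hand side collapses to $(u\cdot n)\big([v^\parallel]\cdot\{w^\parallel\}\big)-(v\cdot n)\big([u^\parallel]\cdot\{w^\parallel\}\big)$; expanding $u\times v$ in the same decomposition, noting that the purely tangential part of the cross product is parallel to $n$ and hence annihilated by the dot product with $n\times\{w\}$, and using $(n\times a)\cdot(n\times b)=a\cdot b$ for $a,b$ orthogonal to the unit vector $n$, shows that the right-hand side equals the same expression. This argument goes through verbatim in three dimensions and, with the scalar-valued cross product, in two dimensions.

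For the last formula ($r=0$) I would again start from $\widehat{[A_u,A_v]}^k=A_u\widehat{A_v}^k-A_v\widehat{A_u}^k$, now regarding $\widehat{A_v}^k\in V_h^0$ as the piecewise-constant function with value $(c[v]_{K_j})^k$ on $K_j$; here $c[u]=\widehat{A_u}$ and $c[v]=\widehat{A_v}$ by Proposition~\ref{hat_A_u}, since $u|_K,v|_K\in P_0(K)\subset RT_0(K)$ and are normal-continuous. From $A_u\mathbf 1=0$ one has $\sum_j(A_u)_{ij}=0$, so $(A_u\widehat{A_v}^k)_i=\sum_{j\in N(i)}(A_u)_{ij}\big((c[v]_{K_j})^k-(c[v]_{K_i})^k\big)$; inserting the off-diagonal entries $(A_u)_{ij}$ from Lemma~\ref{lemma:Aij}, replacing $\int_e u\cdot n\,{\rm d}s$ by $|e|\,(u\cdot n)$ (legitimate since $u$ is constant on each simplex, boundary faces contributing nothing because $u\cdot n=0$ on $\partial\Omega$), and tracking the normal orientations, one finds that the contribution of a face $e$ is $\tfrac{|e|}{2|K_i|}(u\cdot n_{e_-})\big((c[v]_{e_+})^k-(c[v]_{e_-})^k\big)$ regardless of which of $K_-,K_+$ equals $K_i$. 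Performing the symmetric computation for $A_v\widehat{A_u}^k$ and reassembling over $k$ yields the stated expression for $\widehat{[A_u,A_v]}|_K$.

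The one step that needs genuine care is the face identity in the third formula: keeping the normal/tangential decomposition and the orientation conventions consistent, and separately checking the two-dimensional version where ``$\times$'' must be read as the scalar cross product, is the only non-mechanical point; everything else is a routine unwinding of \eqref{hat}, \eqref{consistentapprox}, Lemma~\ref{lemma:Aij} and Proposition~\ref{hat_A_u}.
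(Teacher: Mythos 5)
Your proposal is correct and follows essentially the same route as the paper: reduce $\widehat{[A_u,A_v]}^k$ to $A_u\bar v^k-A_v\bar u^k$ via the hat map, expand with \eqref{consistentapprox}, specialize to piecewise-polynomial $u,v$, and take $g=\widehat{A_w}^k$ for the third formula. You go further than the paper in two places where it is terse: you actually verify the face identity $\sum_k\big((u\cdot n)[v^k]-(v\cdot n)[u^k]\big)\{w^k\}=(n\times\{w\})\cdot[u\times v]$ via the normal/tangential splitting (the paper only invokes ``some vector calculus identities''), and you supply an argument for the $r=0$ formula via Lemma~\ref{lemma:Aij}, which the paper's proof does not address explicitly; both of your arguments check out.
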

\begin{proof} We note that from Proposition \ref{hat_A_u},
\[
\widehat{[A_u, A_v]}^k= A_u(A_v x^k)- A_v(A_u x^k) = A_u I_h^r (v^k)- A_v I_h ^r(u^k)=A_u \bar v^k- A_v \bar u^k.
\]
for all $u,v\in H_0(\dv,\Omega)\cap L^p(\Omega)$, $p>2$.
Then, using \eqref{consistentapprox}, we have for all $g\in V_h^r$:
\[
\langle A_u \bar v^k, g\rangle =\sum_K \int_K (\nabla \bar v^k \cdot u) g {\rm d}x- \sum_{e\in\mathcal{E}^0_h}\int_e u \cdot \llbracket \bar v^k\rrbracket\{g\}{\rm d}s
\]
similarly for $\langle A_v \bar u^k, g\rangle$ from which we get the first formula.

\medskip

The second formula follows when $u|_K, v|_K \in P_r(K)$ since in this case $u=\bar u$, $v=\bar v$.

For the third formula we choose $g= \widehat{A_w}^k=\bar w^k$ in the first formula and sum over $k=1,...,n$ to get
\begin{align*}
\int_\Omega\widehat{[A_u, A_v]}\cdot  \widehat{A_w}\,{\rm d}x &= \sum_{k=1}^n \langle \widehat{[A_u, A_v]}^k, \widehat{A_w}^k\rangle\\
& =\sum_K \int_K (\nabla \bar v\cdot u - \nabla \bar u \cdot v)\cdot \bar w\, {\rm d}x\\
&\qquad \qquad - \sum_{e\in \mathcal{E}^0_h} \int_e \big(u\cdot n\, [\bar v] - v\cdot n\, [\bar u] \big)\cdot \{\bar w\}  {\rm d}s.
\end{align*}
So far we only used $u,v,w\in H_0(\dv, \Omega)\cap L^p(\Omega)$, $p>2$. Now we assume further that $u|_K, v|_K, w|_K\in P_r(K)$ for all $K$, so we have $\bar u=u$, $\bar v=v$, $\bar w=w$, $u\cdot n=\{u\}\cdot n$, $v\cdot n= \{v\}\cdot n$, $[\bar v]\cdot n= [\bar u]\cdot n=0$. Using some vector calculus identities for the last term, we get
\[
\int_\Omega\widehat{[A_u, A_v]}\cdot  \widehat{A_w}\,{\rm d}x  =\sum_K \int_K [u,v]\cdot w\, {\rm d}x - \sum_{e\in \mathcal{E}^0_h} \int_e  (n \times \{ w\}) \cdot  ( \{u\}\times [ v] + [ u] \times \{v\})  {\rm d}s.
\]
which yields the desired formula since $[u\times v]= \{u\}\times [v]+ [u]\times \{v\}$.
\end{proof}

\section{Finite element variational integrator}\label{sec_FEVA}

In this section we derive the variational discretization for compressible fluids by using the setting developed so far. We focus on the case in which the Lagrangian depends only on the velocity and the mass density, since the extension to a dependence on the entropy density is straightforward, see \S\ref{sec_examples} and Appendix \ref{Appendix_A}.

\subsection{Semidiscrete Euler-Poincar\'e equations}

Given a continuous Lagrangian $\ell(u,\rho)$ expressed in terms of the Eulerian velocity $u$ and mass density $\rho$, the associated discrete Lagrangian $\ell_d:\mathfrak{g}_h^r\times V_h^r\rightarrow \mathbb{R}$ is defined with the help of the Lie algebra-to-vector fields map as 
\begin{equation}\label{discrete_l}
\ell_d(A, D):= \ell(\widehat{A}, D),
\end{equation}
where $D\in V_h^r$ is the discrete density.
Exactly as in the continuous case, the right action of $G_h^r$ on discrete densities is defined by duality as
\begin{equation} \label{Daction}
\langle D \cdot q, E \rangle  = \langle D, qE \rangle, \quad \forall E \in V_h^r.
\end{equation}
The corresponding action of $\mathfrak{g}_h^r$ on $D$ is given by
\begin{equation} \label{Daction_g}
\langle D \cdot B, E \rangle  = \langle D, BE \rangle, \quad \forall E \in V_h^r.
\end{equation}

The semidiscrete equations are derived by mimicking the variational formulation of the continuous equations, namely, by using the Euler-Poincar\'e principle applied to $\ell_d$. As we have explained earlier, only the Lie algebra elements in $\operatorname{Im}\mathsf{A}=S_h^r$ actually represent a discretization of continuous vector fields. Following the approach initiated \cite{PaMuToKaMaDe2010} this condition is included in the Euler-Poincar\'e principle by imposing $S_h^r$ as a nonholonomic constraint, and hence applying the Euler-Poincar\'e-d'Alembert recalled in Appendix \ref{Appendix_B}. As we will see later, one needs to further restrict the constraint $S_h^r$ to a subspace $\Delta_h^R\subset S_h^r$.

\medskip

For a given constraint $\Delta_h^R\subset \mathfrak{g}_h^r$, a given Lagrangian $\ell_d$, and a given duality pairing $\langle\!\langle K,A \rangle\!\rangle$ between elements $K\in (\mathfrak{g}_h^r)^*$ and $A\in \mathfrak{g}_h^r$, the Euler-Poincar\'e-d'Alembert principle seeks $A(t)\in \Delta _h^R$ and $D(t)\in V_h^r$ such that
\[
\delta \int_0^T\ell_d(A, D){\rm d}t=0,\quad \text{for $\delta A = \partial_t B + [B,A]$ and $\delta D= - D\cdot B$},
\]
for all $B(t) \in \Delta_h^R$ with $B(0)=B(T)=0$.  The expressions for $\delta A$ and $\delta B$ are deduced from the relations $A(t)= \dot q(t) q(t)^{-1}$ and $D(t)= D_0\cdot q(t)^{-1}$, with $D_0$ the initial value of the density, as in the continuous case.

The critical condition associated to this principle is
\begin{equation}\label{EP_a_weak_fluid}
\Big\langle\!\!\Big\langle \partial_t \frac{\delta\ell_d}{\delta A}, B\Big\rangle\!\!\Big\rangle + \Big\langle\!\!\Big\langle \frac{\delta\ell_d}{\delta A}, [A,B]\Big\rangle\!\!\Big\rangle + \Big\langle \frac{\delta\ell_d}{\delta D},D\cdot B \Big\rangle=0, \quad \forall \;t \in (0,T),\quad\forall\; B\in \Delta_h^R,
\end{equation}
or, equivalently,
\begin{equation}\label{EP_a_strong_fluid}
 \partial_t \frac{\delta\ell_d}{\delta A}+ \operatorname{ad}^*_A \frac{\delta\ell_d}{\delta A} - \frac{\delta\ell_d}{\delta D}\diamond D\in (\Delta_h^R)^\circ , \quad \forall t \in (0,T).
\end{equation}
The differential equation for $D$ follows from differentiating $D(t)=D_0\cdot q(t)^{-1}$ to obtain $\partial_t D = -D \cdot A$, or, equivalently,
\begin{equation} \label{Devolution}
\langle \partial_t D, E \rangle + \langle D, AE \rangle = 0, \quad \forall t \in (0,T),\quad\forall\; E\in V_h^r.
\end{equation}
We refer to Appendix \ref{Appendix_B} for more details and the explanation of the notations. The extension of \eqref{EP_a_weak_fluid} and \eqref{EP_a_strong_fluid} to the case when the Lagrangian depends also on the entropy density is straightforward but important, see \S\ref{sec_examples}.

\medskip

As explained in Appendix \ref{Appendix_B}, a sufficient condition for \eqref{EP_a_weak_fluid} to be a solvable system for $T$ small enough is that the map 
\begin{equation}\label{diffeomorphism}
A\in \Delta^R_h \rightarrow \frac{\delta \ell_d}{\delta A}(A,D)\in (\mathfrak{g}_h^r)^*/(\Delta^R_h)^\circ
\end{equation}
is a diffeomorphism for all $D\in V^r_h$ strictly positive.

\subsection{The compressible fluid}\label{sec_comp}

We now focus on the compressible barotropic fluid, whose continuous Lagrangian is given in \eqref{Lagra_comp}.  Following \eqref{discrete_l}, we have the discrete Lagrangian
\begin{equation}\label{discrete_l_fluid}
\ell_d(A, D):= \ell(\widehat{A}, D)= \int_\Omega \Big[\frac{1}{2}D |\widehat{A}|^2- D e( D)\Big]{\rm d} x.
\end{equation}
In order to check condition \eqref{diffeomorphism}, we shall compute the functional derivative $\frac{\delta \ell_d}{\delta A}$. We have
\[
\Big\langle\!\!\Big\langle \frac{\delta\ell_d}{\delta A},\delta A \Big\rangle\!\!\Big\rangle = \int_\Omega D \widehat{A}\cdot \widehat{\delta A} {\rm d}x = \int_\Omega I_h^r(D \widehat{A})\cdot \widehat{\delta A} {\rm d}x =\Big\langle\!\!\Big\langle  I_h^r(D\widehat{A})^\flat,\delta A\Big\rangle\!\!\Big\rangle
\]
where we defined the linear map $\flat: ([V_h^r]^n)^*=[V_h^r]^n\rightarrow (\mathfrak{g}_h^r)^*$ as the dual map to $\widehat{\;}:\mathfrak{g}_h^r\rightarrow [V_h^r]^n$, namely
\[
\langle\!\langle \alpha^\flat , A \rangle\!\rangle =  \langle\alpha,\widehat{A} \rangle,\;\forall \alpha\in [V_h^r]^n, \; A\in \mathfrak{g}_h^r.
\]
We thus get $\frac{\delta \ell_d}{\delta A}=   I_h^r(D\widehat{A})^\flat$ and note that the choice $\Delta_h^R=S_h^r$ is not appropriate since the linear map $A\in S^r_h \mapsto    I_h^r(D\widehat{A})^\flat\in (\mathfrak{g}_h^r)^*/(S^r_h)^\circ$ is not an isomorphism.
We thus need to restrict the constraint $S_h^r$ to a subspace $\Delta^R_h\subset S_h^r$ such that
\begin{equation}\label{isom}
A\in \Delta^R_h \mapsto    I_h^r(D\widehat{A})^\flat\in (\mathfrak{g}_h^r)^*/(\Delta^R_h)^\circ
\end{equation}
becomes an isomorphism, for all $D\in V^r_h$ strictly positive. We shall denote by $R_h$ the subspace of $RT_{2r}(\mathcal{T}_h)$ corresponding to $\Delta^R_h$ via the isomorphism $u\in RT_{2r}(\mathcal{T}_h)\mapsto A_u \in S_h^r$ shown in Proposition \ref{important_prop}.
The diagram below illustrates the situation that we consider.
\[
\begin{xy}
\xymatrix{
& & H_0(\dv,\Omega) \textcolor{white}{\frac{1}{2}}\ar[r]^{\;\;\mathsf{A}} & S_h^r \textcolor{white}{\frac{1}{2}}\ar@{^{(}->}[r] & \mathfrak{g}_h^r \ar[r]^{\widehat{\;}} & [V_h^r]^n\\
& & RT_{2r}(\mathcal{T}_h)\textcolor{white}{\frac{1}{2}}\ar@{^{(}->}[u]  \ar@{<->}[ur]& \Delta_h^R \textcolor{white}{\frac{1}{2}}\ar@{^{(}->}[u]& &\\
& & R_h\textcolor{white}{\frac{1}{2}}\ar@{^{(}->}[u] \ar@{<->}[ur]& & &
}
\end{xy}
\]

The kernel of \eqref{isom} is computed as
\begin{align*}
\{A \in \Delta^R_h \mid I_h^r(D\widehat{A})^\flat\in (\Delta^R_h)^\circ \}&= \{A\in \Delta^R_h  \mid \langle\!\langle I_h^r(D\widehat{A})^\flat, B \rangle\!\rangle=0,\;\forall B\in \Delta^R_h \}\\
&= \{A\in \Delta^R_h  \mid \langle I_h^r(D\widehat{A}), \widehat{B} \rangle=0,\;\forall B\in \Delta^R_h \}\\
&=\{A_u \in  \Delta^R_h \mid \langle I_h^r(DI_h^r(u)), I^r_h(v) \rangle=0,\;\forall v\in R_h \} \\
&=\{A_u \in  \Delta^R_h \mid \langle DI_h^r(u), I^r_h(v) \rangle=0,\;\forall v\in R_h \}.
\end{align*}
We note that since $A,B\in \Delta^R_h \subset S_h^r$, we have $A=A_u$ and $B=B_v$ for unique $u,v \in R_h\subset RT_{2r}(\mathcal{T}_h)$ by Proposition \ref{important_prop}, so the kernel is isomorphic to the space
\begin{equation}\label{space_intermediate}
\{u \in R_h  \mid \langle DI_h^r(u), I^r_h(v) \rangle=0,\;\forall v\in R_h \}.
\end{equation}
This space is zero if and only if $R_h$ is a subspace of $[V_h^r]^n\cap H_0(\dv, \Omega)=BDM_r(\mathcal{T}_h)$, the Brezzi-Douglas-Marini finite element space of order $r$. Indeed, in this case the space \eqref{space_intermediate} can be rewritten as
\[
\{u \in R_h  \mid \langle Du, v \rangle=0,\;\forall v\in R_h \}=\{0\}
\]
since $D$ is strictly positive (it suffices to take $v=u$).  Conversely, if there exists a nonzero $w \in R_h \setminus BDM_r(\mathcal{T}_h)$, then $u := w - I_h^r(w) \neq 0$ satisfies $I_h^r(u) = 0$, showing that \eqref{space_intermediate} is nonzero.

\medskip

Using the expressions of the functional derivatives
\[
\frac{\delta \ell_d}{\delta A}=   I_h^r(D\widehat{A})^\flat,\quad \frac{\delta \ell_d}{\delta D}=I_h^r\Big( \frac{1}{2} |\widehat{A}|^2-  e( D) -D \frac{\partial e}{\partial D}\Big),
\]
of  \eqref{discrete_l_fluid}, the Euler-Poincar\'e equations \eqref{EP_a_weak_fluid} are equivalent to
\begin{equation}\label{EP_comp}
\Big\langle \partial_t (D\widehat{A}), \widehat{B}\Big\rangle  + \Big\langle  D\widehat{A}, \widehat{[A,B]} \Big\rangle + \Big\langle I_h^r\Big(\frac{1}{2} |\widehat{A}|^2-  e( D) -D \frac{\partial e}{\partial D}\Big),D\cdot B \Big\rangle=0, \quad \forall t \in (0,T), \quad\forall\; B\in \Delta_h^R.
\end{equation}

To relate \eqref{EP_comp} and \eqref{Devolution} to more traditional finite element notation, let us denote $\rho_h = D$, $u_h = -\widehat{A}$, and $\sigma_h = E$, and $v_h=-\widehat{B}$.  Then, using Proposition~\ref{prop:hatbracket}, the identities $\widehat{A_{u_h}} = -\widehat{A}$ and $\widehat{A_{v_h}} = -\widehat{B}$, and the definition  \eqref{consistentapprox} of $A_u$, we see that  \eqref{EP_comp} and \eqref{Devolution} are equivalent to seeking $u_h \in R_h$ and $\rho_h \in V_h^r$ such that
\begin{align}
\langle \partial_t(\rho_h u_h), v_h \rangle + a_h(w_h, u_h, v_h) - b_h(v_h, f_h, \rho_h) &= 0, & \forall v_h \in R_h \label{rhohuhdot} \\
\langle \partial_t \rho_h, \sigma_h \rangle - b_h(u_h, \sigma_h, \rho_h) &= 0, & \forall \sigma_h \in V_h^r, \label{rhohdot}
\end{align}
where $w_h = I_h^r(\rho_h u_h)$, $f_h = I_h^r\left( \frac{1}{2}|u_h|^2 - e(\rho_h) - \rho_h \frac{\partial e}{\partial \rho_h} \right)$, and
\begin{align*}
a_h(w,u,v) &= \sum_{K \in \mathcal{T}_h} \int_K w \cdot (v \cdot \nabla u - u \cdot \nabla v) \, dx + \sum_{e \in \mathcal{E}_h^0} \int_e (v \cdot n [ u ]- u \cdot n [ v ]) \cdot \{w\} \, ds, \\
b_h(w,f,g) &= \sum_{K \in \mathcal{T}_h} \int_K (w \cdot \nabla f) g \, dx - \sum_{e \in \mathcal{E}_h^0} w \cdot \llbracket f \rrbracket \{g\} \, ds.
\end{align*}

\begin{remark}
The above calculations carry over also to the setting in which the density is taken to be an element of $V_h^s \subset V_h^r$, $s<r$.  In this setting,~(\ref{rhohdot}) must hold for every $\sigma_h \in V_h^s$, the definition of $f_h$ becomes $f_h = I_h^s\left( \frac{1}{2}|u_h|^2 - e(\rho_h) - \rho_h \frac{\partial e}{\partial \rho_h} \right)$, and the definition of $w_h$ remains unchanged.  By fixing $s$ and $R_h$, we may then take $r$ large enough so that $I_h^r(\rho_h u_h)=\rho_h u_h$.
\end{remark}

\paragraph{Extension to rotating fluids.} For the purpose of application in geophysical fluid dynamics, we consider the case of a rotating fluid with angular velocity $\omega$ in a gravitational field with potential $\Phi(x)$. The equations of motion are obtained by taking the Lagrangian
\[
\ell(u,\rho)= \int_\Omega \Big[\frac{1}{2}\rho|u|^2 + \rho R\cdot u  - \rho e(\rho) -\rho\Phi\Big] {\rm d}x,
\]
where the vector field $R$ is half the vector potential of $\omega$, i.e. $2\omega= \operatorname{curl}R$. Application of the Euler-Poincar\'e principle \eqref{variationalu_rho} yields the balance of fluid momentum
\begin{equation}\label{Euler_rot}
\rho(\partial_t u + u \cdot \nabla u + 2\omega \times u) = - \rho\nabla\Phi -\nabla p, \quad \text{with}\quad p = \rho^2\frac{\partial e}{\partial \rho}.
\end{equation}

The discrete Lagrangian is defined exactly as in \eqref{discrete_l} and reads
\begin{equation}\label{discrete_l_rot}
\ell_d(A, D):= \ell(\widehat{A}, D)= \int_\Omega \Big[\frac{1}{2}D |\widehat{A}|^2+ D \widehat{A}\cdot R- D e( D) - D \Phi \Big]{\rm d} x.
\end{equation}
We get $\frac{\delta \ell_d}{\delta A}=   I_h^r(D\widehat{A})^\flat+ I_h (D R)^\flat$ and the same reasoning as before shows that the affine map
\begin{equation}\label{isom_rot}
A\in \Delta^R_h \rightarrow   \frac{\delta \ell_d}{\delta A}\in (\mathfrak{g}_h^r)^*/(\Delta^R_h)^\circ
\end{equation}
is a diffeomorphism for all $D\in V_h^r$ strictly positive. The Euler-Poincar\'e equations \eqref{EP_a_weak_fluid} now yield
\begin{equation}\label{EP_comp_rot}
\Big\langle \partial_t \big(D(\widehat{A}+R)\big), \widehat{B}\Big\rangle  + \Big\langle  D(\widehat{A}+R), \widehat{[A,B]} \Big\rangle + \Big\langle I_h^r\Big(\frac{1}{2} |\widehat{A}|^2 + \widehat{A} \cdot R-  e( D) -D \frac{\partial e}{\partial D}-\Phi\Big),D\cdot B \Big\rangle=0,\;\;\forall B\in \Delta^R_h,
\end{equation}
which, in traditional finite element notations is
\begin{align}
\langle \partial_t(\rho_h u_h+\rho_h R), v_h \rangle + a_h(w_h, u_h, v_h) - b_h(v_h, f_h, \rho_h) &= 0, &\quad \forall v_h \in R_h, \label{FEMvelocity} \\
\langle \partial_t \rho_h, \sigma_h \rangle - b_h(u_h, \sigma_h, \rho_h) &= 0, &\quad \forall \sigma_h \in V_h^r, \label{FEMdensity}
\end{align}
where $w_h = I_h^r(\rho_h u_h+\rho_h R)$, $f_h = I_h^r\left( \frac{1}{2}|u_h|^2 + u_h \cdot R - e(\rho_h)- \rho_h \frac{\partial e}{\partial \rho_h} -\Phi \right)$, and $a_h$, $b_h$ defined as before.

\paragraph{Lowest-order setting.} As a consequence of Remark \ref{case_r=0}, for $r=0$, the Euler-Poincar\'e equations \eqref{EP_a_weak_fluid} are identical to the discrete equations considered in \cite{BaGB2019} and, in the incompressible case, they coincide with those of \cite{PaMuToKaMaDe2010,GaMuPaMaDe2011,DeGaGBZe2014}. The discrete Lagrangians used are however different. For instance, by using the result of Lemma \ref{hat_0}, for $r=0$, the discrete Lagrangian \eqref{discrete_l_fluid} for $A\in S_h^0$ becomes
 \begin{equation} \label{lagrangianAlow}
\ell(A,D) = \frac{1}{2} \sum_i |K_i| D_i\sum_{j,k \in N(i)} M^{(i)}_{jk} A_{ij}A_{ik}- \sum_i |K_i| D_i e(D_i),
\end{equation}
where $M^{(i)}_{jk} = (b_j - b_i) \cdot (b_k - b_i)$.
This is similar, but not identical, to the reduced Lagrangian used in, e.g., \cite{PaMuToKaMaDe2010}.  There, each $M^{(i)}$ is replaced by a diagonal matrix with diagonal entries $M^{(i)}_{jj} = 2|K_i||c_i-c_j|/|K_i \cap K_j|$, where $c_i$ denotes the circumcenter of $K_i$.

\paragraph{Variational time discretization.} The variational character of compressible fluid equations can be exploited also at the temporal level, by deriving the temporal scheme via a discretization in time of the Euler-Poincar\'e variational principle, in a similar way to what has been done in \cite{GaMuPaMaDe2011,DeGaGBZe2014} for incompressible fluid models. This discretization of the Euler-Poincar\'e equation follows the one presented in \cite{BRMa2009}.

In this setting, the relations $A(t)=\dot g (t) g(t)^{-1}$ and $D(t)=D_0\cdot g(t)^{-1}$ are discretized as
\begin{equation}\label{discrete_reduction}
A_k=\tau^{-1}(g_{k+1}g_k^{-1})/\Delta t\quad\text{and}\quad D_k=D_0\cdot g_k^{-1},
\end{equation}
where $\tau:\mathfrak{g}_h^r\rightarrow G_h^r$ is a local diffeomorphism from a neighborhood of $0 \in \mathfrak{g}_h^r$ to a neighborhood of $e \in G_h^r$ with $\tau(0)=e$ and $\tau(A)^{-1}=\tau(-A)$. Given $A\in\mathfrak{g}_h^r$, we denote by $d\tau_A:\mathfrak{g}_h^r\rightarrow\mathfrak{g}_h^r$ the right trivialized tangent map defined as
\[
d\tau_A(\delta A):=\left(\mathbf{D}\tau(A)\cdot\delta A\right)\tau(A)^{-1},\quad\delta A\in\mathfrak{g}_h^r.
\]
We denote by $d\tau_A^{-1}:\mathfrak{g}_h^r\rightarrow\mathfrak{g}_h^r$ its inverse and by $ (d\tau_A^{-1} )^*:( \mathfrak{g}_h^r)^*\rightarrow(\mathfrak{g}_h^r)^*$ the dual map.

The discrete Euler-Poincar\'e-d'Alembert variational principle reads
\[
\delta \sum_{k=0}^{K-1}\ell_d(A_k,D_k)\Delta t=0,
\]
for variations
\[
\delta A_k = \frac{1}{\Delta t}d\tau_{\Delta t A_k}^{-1}(B_{k+1})- \frac{1}{\Delta t}d\tau^{-1}_{- \Delta tA_k}(B_k),\qquad \delta D_k= - D_k \cdot B_k
\]
where $B_k\in \Delta^R_h$ vanishes at the extremities. These variations are obtained by taking the variations of the relations \eqref{discrete_reduction} and defining $B_k= \delta g_k g_k^{-1}$. It yields
\[
\frac{1}{\Delta t}\Big\langle\!\!\Big\langle\left(d\tau^{-1}_{\Delta t A_{k-1}}\right)^*\frac{\delta\ell_d}{\delta A_{k-1}}-  \left(d\tau^{-1}_{-\Delta t A_k}\right)^*\frac{\delta\ell_d}{\delta A_k}, B_k \Big\rangle\!\!\Big\rangle - \Big\langle \frac{\delta \ell_d}{\delta D_k}, D_k\cdot B_k\Big\rangle=0,\qquad \forall\; B_k \in \Delta_h^R.
\]
From $D_k= D_0\cdot g_k^{-1}$, one gets
\begin{equation} \label{Dkp1}
D_{k+1}=D_k\cdot \tau(-\Delta tA_k).
\end{equation}

Several choices are possible for the local diffeomorphism $\tau$, see, e.g., \cite{BRMa2009}.  One option is the Cayley transform
\[
\tau (A)= \left(I- \frac{A}{2}\right)^{-1} \left(I +\frac{A}{2}\right).
\]
We have $\tau(0)=I$ and since $\mathbf{D}\tau(0)\delta A= \delta A$, it is a local diffeomorphism. We also note that $A\mathbf{1}=0$ implies $\tau(A) \mathbf{1}=\mathbf{1}$ in a suitably small neighborhood of $0$. We have
\[
d\tau_A(\delta A)=  \left(I- \frac{A}{2}\right)^{-1} \delta A  \left(I + \frac{A}{2}\right)^{-1},\quad d\tau_A^{-1}(B)= B + \frac{1}{2}[B,A] - \frac{1}{4}ABA,
\]
so the discrete Euler-Poincar\'e equations read
\begin{align*}
&\Big\langle\!\!\Big\langle \frac{1}{\Delta t}\left(\frac{\delta\ell_d}{\delta A_k}- \frac{\delta\ell_d}{\delta A_{k-1}}\right) , B_k \Big\rangle\!\!\Big\rangle +\frac{1}{2} \Big\langle\!\!\Big\langle \frac{\delta\ell_d}{\delta A_k}, [A_k,B_k] - \frac{\Delta t}{2}  A_k B_k A_k\Big\rangle\!\!\Big\rangle \\
&+ \frac{1}{2} \Big\langle\!\!\Big\langle\frac{\delta\ell_d}{\delta A_{k-1}}, [A_{k-1},B_k] + \frac{\Delta t}{2} A_{k-1} B_k A_{k-1} \Big\rangle\!\!\Big\rangle  + \Big\langle \frac{\delta \ell_d}{\delta D_k}, D_k\cdot B_k\Big\rangle=0,\quad \forall\; B_k \in \Delta_h^R.
\end{align*}
This is the discrete time version of \eqref{EP_a_weak_fluid}. The discrete time version of \eqref{EP_a_strong_fluid} can be similarly written.
With this choice of $\tau$, the evolution $D_k$ is obtained from~(\ref{Dkp1}), which is equivalent to
\[
D_k \cdot (I+\frac{\Delta t}{2} A_{k-1}) = D_{k-1} \cdot (I-\frac{\Delta t}{2} A_{k-1}).
\]
Recalling~(\ref{Daction_g}), we get
\[
\left\langle \frac{D_k-D_{k-1}}{\Delta t}, E_k \right\rangle + \left\langle \frac{D_{k-1}+D_k}{2}, A_{k-1} E_k \right\rangle = 0, \quad \forall E_k \in V_h^r.
\]

\paragraph{Energy preserving time discretization.}
For Lagrangians of the form~(\ref{discrete_l_rot}), it is possible to construct a time discretization that exactly preserves the energy $\int_\Omega [\frac{1}{2}D |\widehat{A}|^2 + D e( D) + D\Phi ]{\rm d} x$.  Note that the contribution of the rotation does not appear in the expression of the total energy. To do this, let us define
\begin{equation}\label{discrete_gradient}
F_{k-1/2} = \frac{1}{2} \widehat{A_{k-1}} \cdot \widehat{A_k} + \widehat{A_{k-1/2}} \cdot R - f(D_{k-1},D_k)- \Phi,
\end{equation}
where
\[
f(x,y) = \frac{ye(y)-xe(x)}{y-x}.
\]
Also let $A_{k-1/2} = \frac{1}{2}(A_{k-1}+A_k)$ and $D_{k-1/2} = \frac{1}{2}(D_{k-1}+D_k)$.  The energy-preserving scheme reads
\begin{align}
&&\Big\langle\!\!\Big\langle \frac{1}{\Delta t}\left(\frac{\delta\ell_d}{\delta A_k}- \frac{\delta\ell_d}{\delta A_{k-1}}\right) , B_k \Big\rangle\!\!\Big\rangle\hspace{2in}& \nonumber \\
&&+\frac{1}{2} \Big\langle\!\!\Big\langle \frac{\delta\ell_d}{\delta A_{k-1}} + \frac{\delta\ell_d}{\delta A_k}, [A_{k-1/2},B_k] \Big\rangle\!\!\Big\rangle  + \langle F_{k-1/2}, D_{k-1/2} \cdot B_k\rangle&=0,\quad \forall\; B_k \in \Delta_h^R, \label{energy_pres_A} \\
&&\left\langle \frac{D_k - D_{k-1}}{\Delta t}, E_k \right\rangle + \langle D_{k-1/2} \cdot A_{k-1/2}, E_k \rangle &= 0, \quad \forall E_k \in V_h^r. \label{energy_pres_D}
\end{align}

\begin{proposition}
The solution of~(\ref{energy_pres_A}-\ref{energy_pres_D}) satisfies
\begin{equation} \label{discrete_energy_pres}
\int_\Omega \left[\frac{1}{2}D_k |\widehat{A_k}|^2 + D_k e( D_k) + D_k\Phi \right]{\rm d} x = \int_\Omega \left[\frac{1}{2}D_{k-1} |\widehat{A_{k-1}}|^2 + D_{k-1} e( D_{k-1}) + D_{k-1}\Phi \right]{\rm d} x.
\end{equation}
\end{proposition}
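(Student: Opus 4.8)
The plan is to derive~(\ref{discrete_energy_pres}) by a discrete analogue of the Noether/energy argument: test the momentum relation~(\ref{energy_pres_A}) with the special element $B_k = A_{k-1/2} = \tfrac12(A_{k-1}+A_k)$ and the density relation~(\ref{energy_pres_D}) with $E_k = I_h^r(F_{k-1/2})$, and check that the two identities combine into the claimed balance. Both test functions are admissible: $\Delta_h^R$ is a linear subspace containing $A_{k-1}$ and $A_k$, so $A_{k-1/2}\in\Delta_h^R$, and $I_h^r(F_{k-1/2})\in V_h^r$. The structural point that makes the choice $B_k = A_{k-1/2}$ work is that the advection term in~(\ref{energy_pres_A}) contains $[A_{k-1/2},B_k]$, which vanishes identically at $B_k = A_{k-1/2}$; hence~(\ref{energy_pres_A}) collapses to
\[
\frac{1}{\Delta t}\Big\langle\!\!\Big\langle \frac{\delta\ell_d}{\delta A_k} - \frac{\delta\ell_d}{\delta A_{k-1}},\, A_{k-1/2}\Big\rangle\!\!\Big\rangle + \langle F_{k-1/2},\, D_{k-1/2}\cdot A_{k-1/2}\rangle = 0 .
\]

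Next I would expand the first term. Using $\frac{\delta\ell_d}{\delta A} = I_h^r(D\widehat{A})^\flat + I_h^r(DR)^\flat$, the defining relation $\langle\!\langle\alpha^\flat,A\rangle\!\rangle = \langle\alpha,\widehat{A}\rangle$ of the flat map, the linearity $\widehat{A_{k-1/2}} = \tfrac12(\widehat{A_{k-1}}+\widehat{A_k})$ of the hat map, and the fact that $I_h^r$ is self-adjoint and acts as the identity on $\widehat{A_{k-1/2}}\in[V_h^r]^n$, a short rearrangement of the four resulting inner products gives
\[
\Big\langle\!\!\Big\langle \frac{\delta\ell_d}{\delta A_k} - \frac{\delta\ell_d}{\delta A_{k-1}},\, A_{k-1/2}\Big\rangle\!\!\Big\rangle = (\mathcal{K}_k - \mathcal{K}_{k-1}) + \int_\Omega (D_k - D_{k-1})\Big(\tfrac12\,\widehat{A_{k-1}}\cdot\widehat{A_k} + R\cdot\widehat{A_{k-1/2}}\Big)\,{\rm d}x ,
\]
where $\mathcal{K}_k := \tfrac12\int_\Omega D_k|\widehat{A_k}|^2\,{\rm d}x$; the ``diagonal'' inner products $\tfrac12\langle D_k\widehat{A_k},\widehat{A_k}\rangle$ and $\tfrac12\langle D_{k-1}\widehat{A_{k-1}},\widehat{A_{k-1}}\rangle$ produce the kinetic energies, while the ``cross'' terms $\tfrac12\langle D_k\widehat{A_k},\widehat{A_{k-1}}\rangle - \tfrac12\langle D_{k-1}\widehat{A_{k-1}},\widehat{A_k}\rangle$ produce the correction $\tfrac12(D_k - D_{k-1})\widehat{A_{k-1}}\cdot\widehat{A_k}$.

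For the second term, I would invoke~(\ref{energy_pres_D}) with $E_k = I_h^r(F_{k-1/2})$; since $D_{k-1/2}\cdot A_{k-1/2}$ and $D_k - D_{k-1}$ both lie in $V_h^r$, the projector may be dropped in both pairings, yielding $\langle F_{k-1/2},\, D_{k-1/2}\cdot A_{k-1/2}\rangle = -\tfrac{1}{\Delta t}\int_\Omega (D_k - D_{k-1})F_{k-1/2}\,{\rm d}x$. Substituting this together with the previous display into the collapsed equation and multiplying by $\Delta t$, the terms $\tfrac12\widehat{A_{k-1}}\cdot\widehat{A_k}$ and $R\cdot\widehat{A_{k-1/2}}$ cancel against the corresponding pieces of $F_{k-1/2}$ in~(\ref{discrete_gradient}) --- in particular the rotation contribution disappears, consistent with its absence from the energy --- and what survives is
\[
(\mathcal{K}_k - \mathcal{K}_{k-1}) + \int_\Omega (D_k - D_{k-1})\big(f(D_{k-1},D_k) + \Phi\big)\,{\rm d}x = 0 .
\]
The final input is the defining algebraic property of the discrete gradient, $(y-x)f(x,y) = y e(y) - x e(x)$, which converts $\int_\Omega(D_k-D_{k-1})f(D_{k-1},D_k)\,{\rm d}x$ into $\int_\Omega\big(D_k e(D_k) - D_{k-1}e(D_{k-1})\big)\,{\rm d}x$, while $\int_\Omega(D_k-D_{k-1})\Phi\,{\rm d}x$ is the difference of the potential energies; adding the three pieces gives~(\ref{discrete_energy_pres}). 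I do not expect a genuine obstacle here: the argument is the standard discrete-gradient energy computation, and the only care needed is bookkeeping --- keeping track of which occurrences of $I_h^r$ may legitimately be removed (always against an element of $V_h^r$ or of the form $D\cdot B$) and getting the signs of the kinetic cross-terms and of the $F_{k-1/2}$ contribution to assemble correctly.
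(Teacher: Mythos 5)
Your proposal is correct and follows essentially the same route as the paper's proof: test~(\ref{energy_pres_A}) with $B_k=A_{k-1/2}$ so the bracket term vanishes, expand the momentum difference into kinetic-energy differences plus cross terms, use~(\ref{energy_pres_D}) to convert $\langle F_{k-1/2},D_{k-1/2}\cdot A_{k-1/2}\rangle$ into $-\tfrac{1}{\Delta t}\langle D_k-D_{k-1},F_{k-1/2}\rangle$, cancel the cross and rotation terms against the corresponding pieces of $F_{k-1/2}$, and finish with the discrete-gradient identity for $f$. The bookkeeping you flag (linearity of the hat map, dropping $I_h^r$ against elements of $V_h^r$, the sign of the cross terms) all checks out.
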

\begin{proof}
Taking $B_k = A_{k-1/2}$ in~(\ref{energy_pres_A}) gives
\begin{align*}
\Big\langle\!\!\Big\langle \frac{1}{\Delta t}\left(\frac{\delta\ell_d}{\delta A_k}- \frac{\delta\ell_d}{\delta A_{k-1}}\right) , A_{k-1/2} \Big\rangle\!\!\Big\rangle + \langle F_{k-1/2}, D_{k-1/2} \cdot A_{k-1/2} \rangle = 0.
\end{align*}
Using the density equation~(\ref{energy_pres_D}) and the definition~(\ref{discrete_l_rot}) of $\ell_d$, we can rewrite this as
\begin{align}
\Big\langle \frac{1}{\Delta t}\left(D_k(\widehat{A_k}+R) - D_{k-1}(\widehat{A_{k-1}}+R)\right) , \widehat{A_{k-1/2}} \Big\rangle - \left\langle \frac{D_k-D_{k-1}}{\Delta t}, F_{k-1/2} \right\rangle = 0. \label{energyproof}
\end{align}
After rearrangement, the first term can be expressed as
\begin{align*}
&\Big\langle \frac{1}{\Delta t}\left(D_k(\widehat{A_k}+R) - D_{k-1}(\widehat{A_{k-1}}+R)\right) , \widehat{A_{k-1/2}} \Big\rangle \\
&=
\frac{1}{2\Delta t} \left( \langle  D_k \widehat{A_k}, \widehat{A_k} \rangle - \langle  D_{k-1} \widehat{A_{k-1}}, \widehat{A_{k-1}} \rangle \right) + \left\langle \frac{D_k-D_{k-1}}{\Delta t}, \widehat{A_{k-1/2}} \cdot R + \frac{1}{2}\widehat{A_{k-1}} \cdot \widehat{A_k}  \right\rangle.
\end{align*}
Inserting this and the definition of $F_{k-1/2}$ into~(\ref{energyproof}) gives
\[
\frac{1}{2\Delta t} \left( \langle  D_k \widehat{A_k}, \widehat{A_k} \rangle - \langle  D_{k-1} \widehat{A_{k-1}}, \widehat{A_{k-1}} \rangle \right) + \left\langle \frac{D_k-D_{k-1}}{\Delta t}, f(D_{k-1},D_k) + \Phi \right\rangle = 0.
\]
Finally, the definition of $f$ yields
\[
\frac{1}{2\Delta t} \left( \langle  D_k \widehat{A_k}, \widehat{A_k} \rangle - \langle  D_{k-1} \widehat{A_{k-1}}, \widehat{A_{k-1}} \rangle \right) + \left\langle \frac{D_k e(D_k)-D_{k-1}e(D_{k-1}) + D_k \Phi -D_{k-1}\Phi}{\Delta t}, 1 \right\rangle = 0,
\]
which is equivalent to~(\ref{discrete_energy_pres}).
\end{proof}

Note that the definition of $F_{k-1/2}$ in \eqref{discrete_gradient} can be rewritten in terms of $\ell_d$ as
\begin{equation} \label{elldiff}
\ell_d(A_k, D_k) - \ell_d(A_{k-1},D_{k-1})= \frac{1}{2} \Big\langle\!\!\Big\langle \frac{\delta\ell_d}{\delta A_{k-1}} + \frac{\delta\ell_d}{\delta A_k}, A_k-A_{k-1} \Big\rangle\!\!\Big\rangle+ \langle F_{k-1/2}, D_k - D_{k-1} \rangle,
\end{equation}
This is reminiscent of a discrete gradient method~\cite[p. 174]{HaLuWa2006}, with $F_{k-1/2}$ playing the role of the discrete version of $\frac{\delta\ell}{\delta D}$.

\section{Numerical tests}\label{sec_examples}

\begin{table}[t]
\centering
\maketable{rotating_a0b0.dat}
\caption{$L^2$-errors in the velocity and density at time $T=0.5$.}
\label{tab:a0b0}
\end{table}

\begin{table}[t]
\centering
\maketabledt{rotating_a0b0dt.dat}
\caption{Convergence with respect to $\Delta t$ of the $L^2$-errors in the velocity and density at time $T=0.5$.}
\label{tab:a0b0_dt}
\end{table}

\begin{figure}
\centering
\includegraphics[scale=0.25,trim=9in 2in 9in 2in,clip=true]{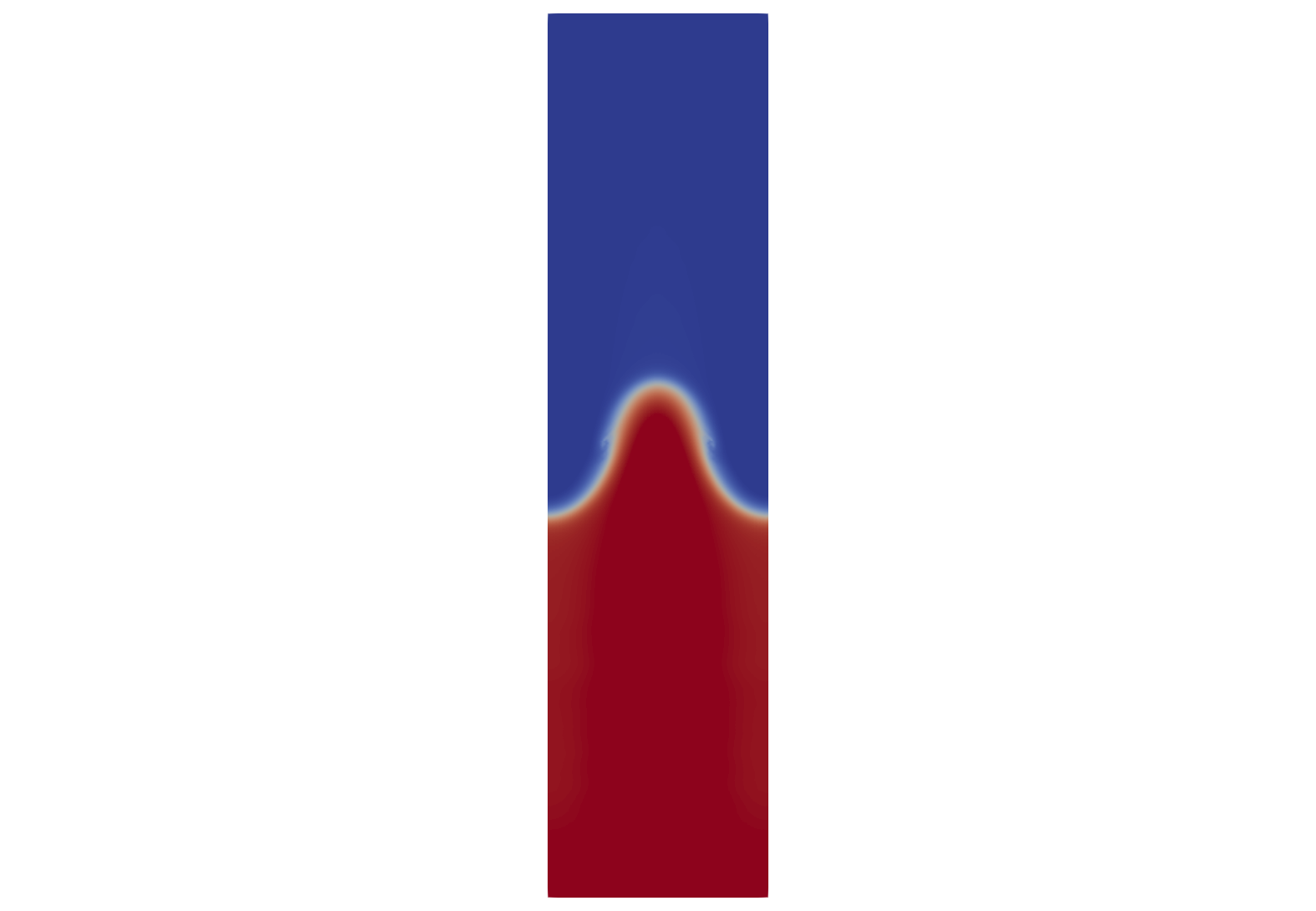}
\includegraphics[scale=0.25,trim=9in 2in 9in 2in,clip=true]{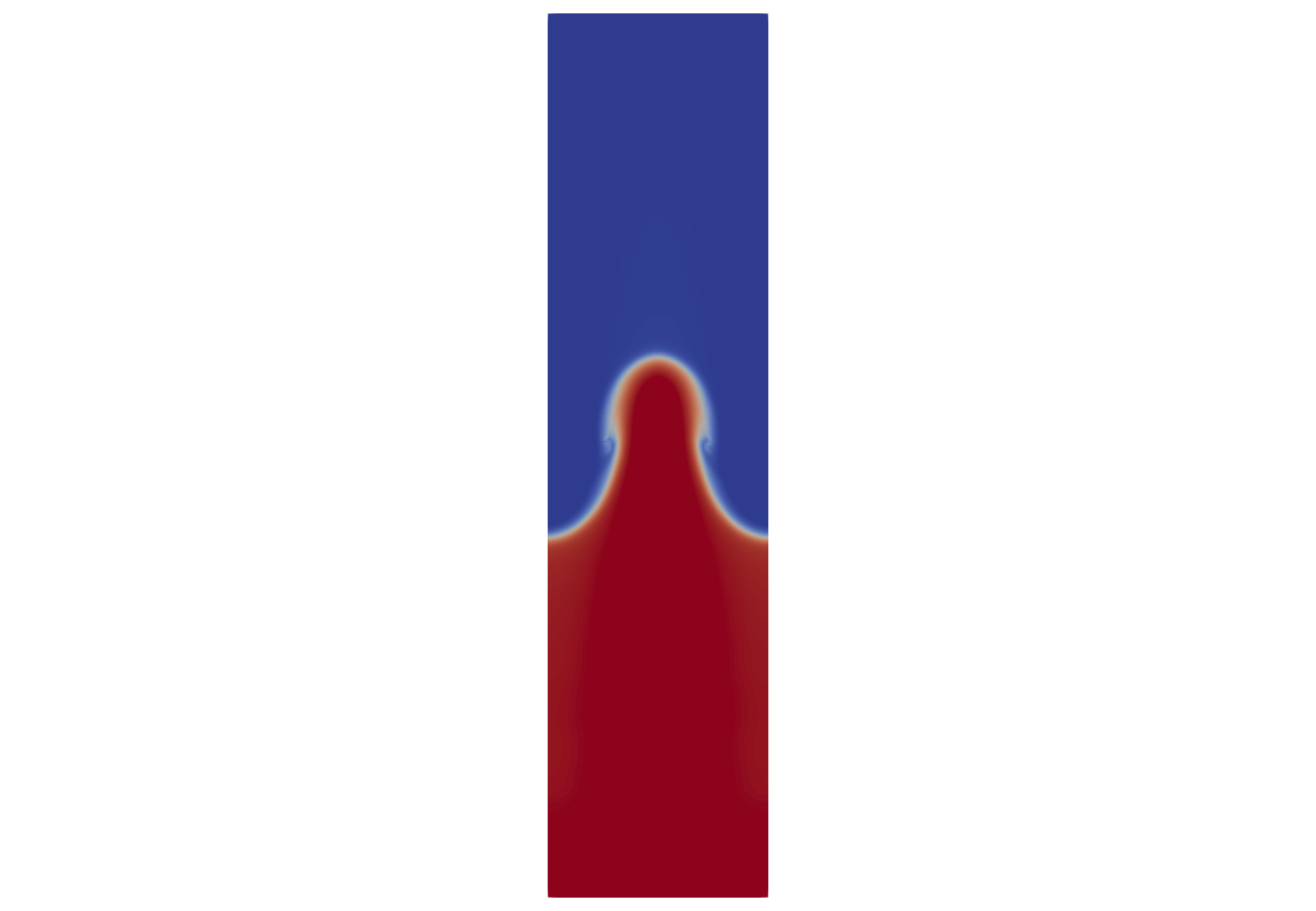}
\includegraphics[scale=0.25,trim=9in 2in 9in 2in,clip=true]{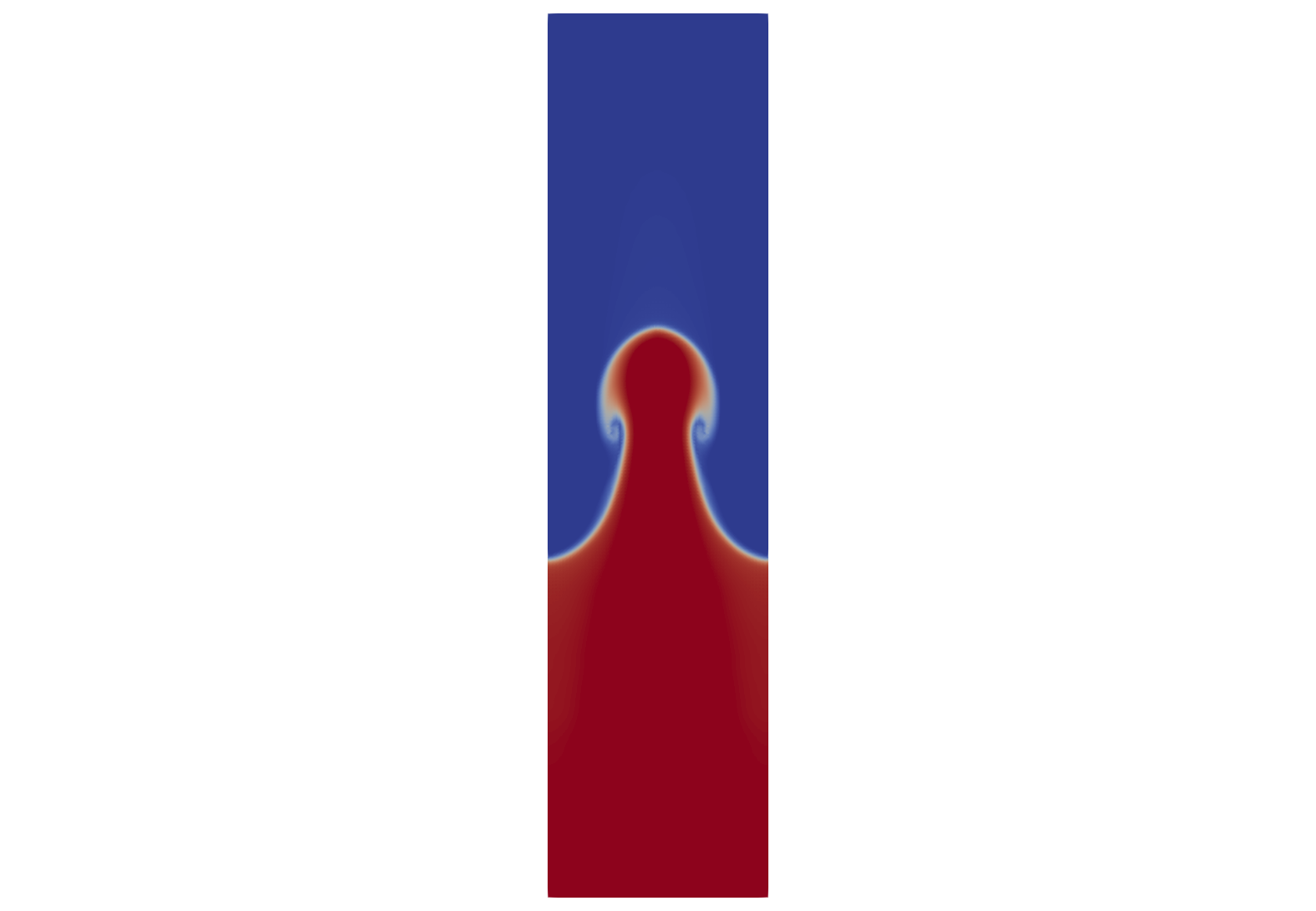}
\includegraphics[scale=0.25,trim=9in 2in 9in 2in,clip=true]{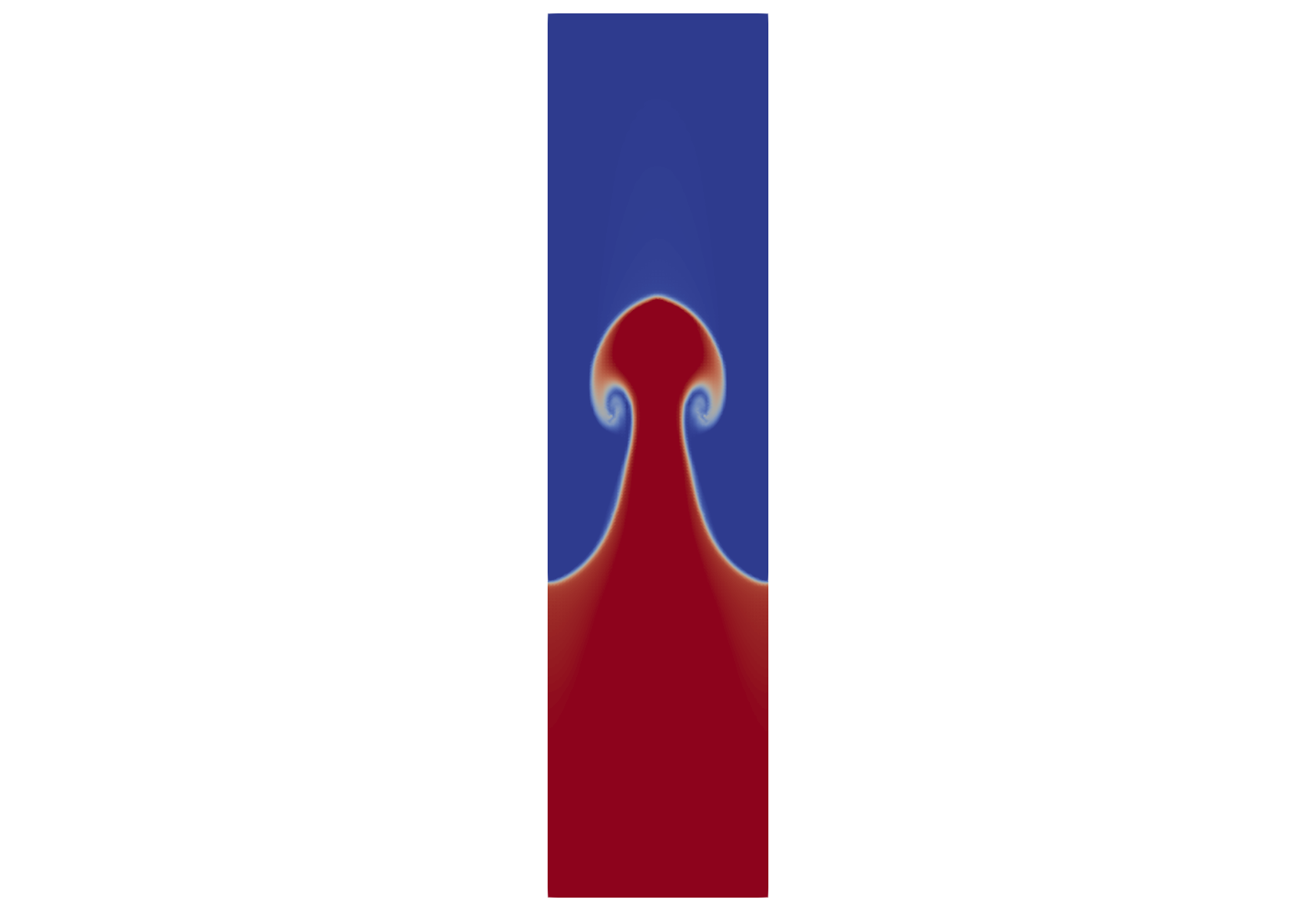}
\includegraphics[scale=0.25,trim=9in 2in 9in 2in,clip=true]{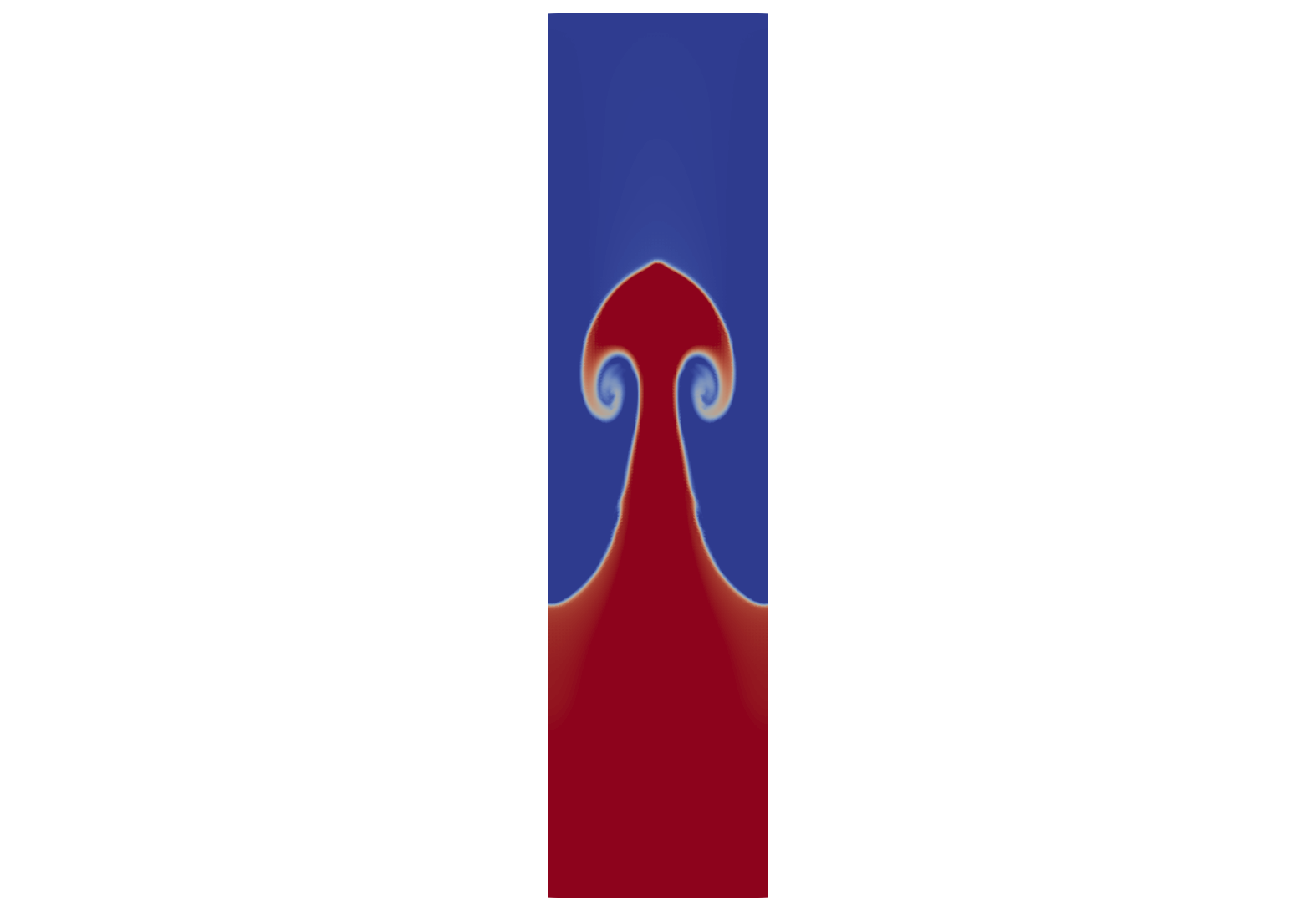}
\includegraphics[scale=0.25,trim=9in 2in 9in 2in,clip=true]{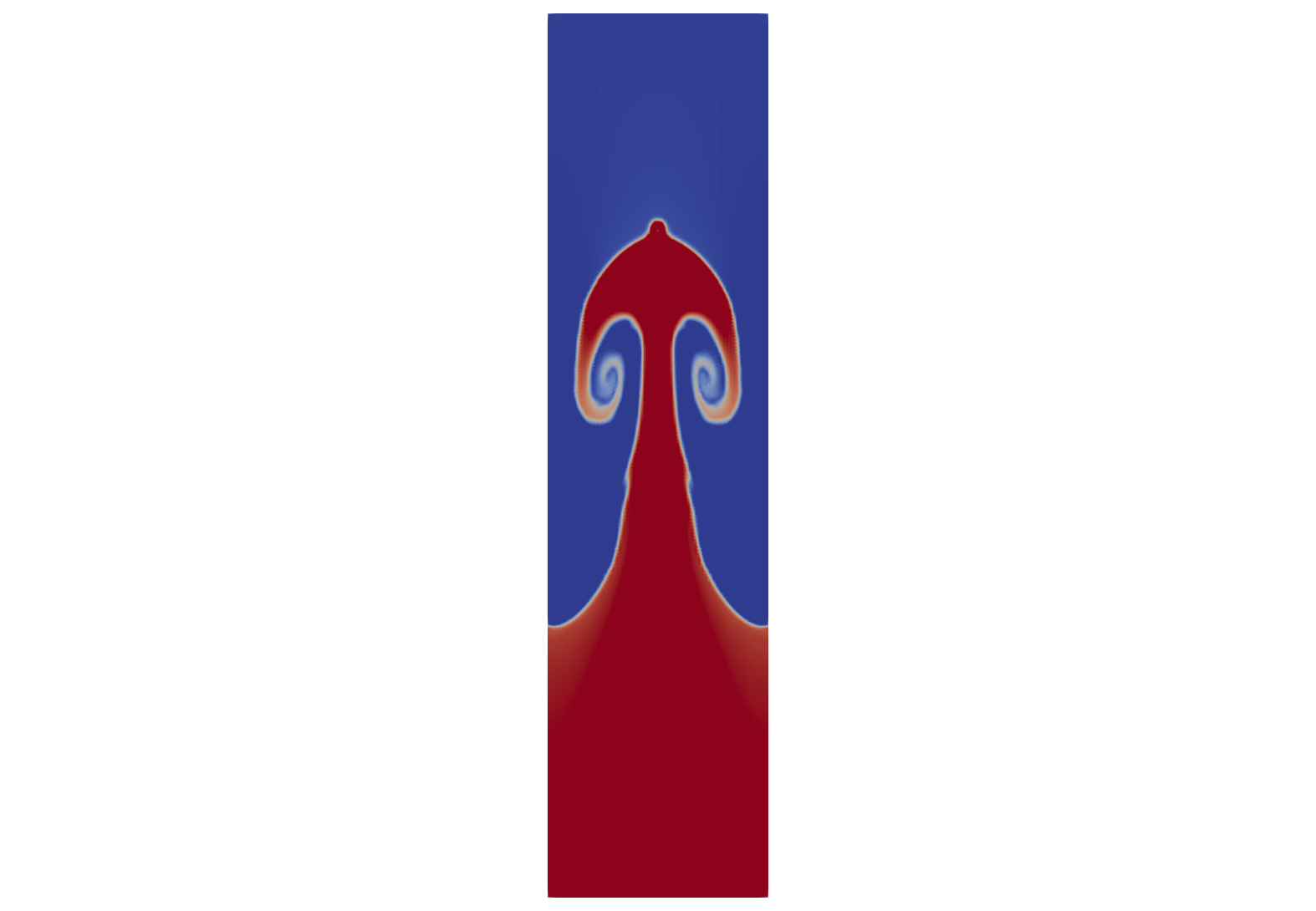}
\caption{Contours of the mass density at $t=1.0,1.2,1.4,1.6,1.8,2.0$ in the Rayleigh-Taylor instability simulation.}
\label{fig:raytay}
\end{figure}

\begin{figure}
\centering
\begin{tikzpicture}[scale=1]
\begin{axis}[xlabel=$t$,ylabel=$E(t)/E(0)-1$]
\addplot[blue] table [x expr=\coordindex*0.01, y expr=\thisrowno{0}]{raytay_energyminus1.dat};
\end{axis}
\end{tikzpicture}
\caption{Relative errors in the energy $E(t) = \int_\Omega \left( \frac{1}{2} \rho |u|^2 + \rho e(\rho) + \rho \Phi \right) \, {\rm d}x$ during the Rayleigh-Taylor instability simulation.}
\label{fig:raytay_energy}
\end{figure}
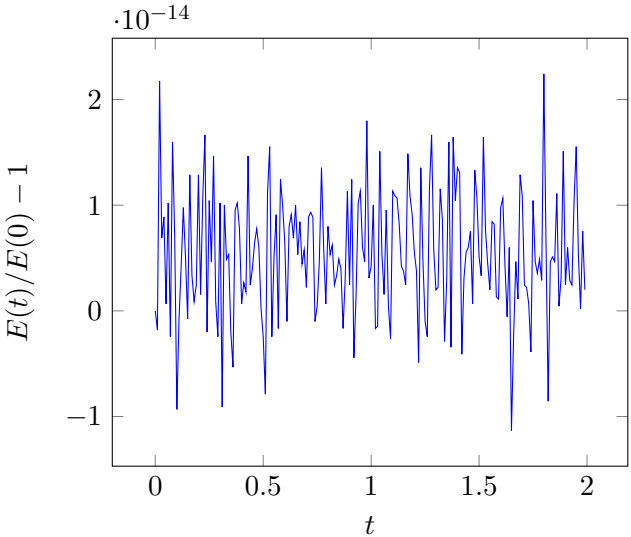

\paragraph{Convergence.}

To test our numerical method, we used~(\ref{FEMvelocity}-\ref{FEMdensity}) to simulate a rotating fluid with angular velocity $\omega = 1$ (i.e. $R=(-y,x)$) and internal energy $e(\rho) = \frac{1}{2} \rho$ in the absence of a gravitational field. This choice of the function $e(\rho)$ corresponds to the case of the rotating shallow water equations, for which $\rho$ is interpreted as the fluid depth. We initialized $u(x,y,0) = (0,0)$ and $\rho(x,y,0) = 2 + \sin(\pi x/2) \sin(\pi y/2)$ on $\Omega=(-1,1) \times (-1,1)$ and numerically integrated~(\ref{FEMvelocity}-\ref{FEMdensity}) using the energy-preserving time discretization~(\ref{energy_pres_A}-\ref{energy_pres_D}) with $\Delta t = 0.00625$.  We used the finite element spaces $R_h=RT_r(\mathcal{T}_h)$ and $V_h^r$ with $r=0,1,2$ on a uniform triangulation $\mathcal{T}_h$ of $\Omega$ with maximum element diameter $h = 2^{-j}$, $j=0,1,2,3$.  We computed the $L^2$-errors in the velocity and density at time $T=0.5$ by comparing with an ``exact solution'' obtained with $h=2^{-5}$, $r=2$.  The results in Table~\ref{tab:a0b0} indicate that the method's convergence order is optimal (order $r+1$) when $r=0$ and suboptimal when $r>0$, but still grows with $r$.

We also repeated the above experiment with varying values of $\Delta t$ and with fixed values of $h=2^{-4}$ and $r=2$.  The results in Table~\ref{tab:a0b0_dt} indicate that the method is second-order accurate with respect to $\Delta t$.

\paragraph{Rayleigh-Taylor instability.}

Next, we simulated a Rayleigh-Taylor instability.  For this test, we considered a fully (or baroclinic) compressible fluid, whose energy depends on both the mass density $\rho$ and the entropy density $s$, both of which are advected parameters.  The setting is the same as above, but with a Lagrangian
\begin{equation}\label{Baroclinic_L}
\ell(u,\rho,s)= \int_\Omega \Big[\frac{1}{2}\rho|u|^2  - \rho e(\rho, \eta) -\rho\Phi\Big] {\rm d}x,
\end{equation}
where $\eta= \frac{s}{\rho}$ is the specific entropy.  In terms of the discrete velocity $A \in \Delta_h^R$, discrete mass density $D \in V_h^r$, and discrete entropy density $S \in V_h^r$, the spatially discrete Euler-Poincar\'e equations for this Lagrangian read
\begin{equation}\label{EP_a_weak_fluid_S}
\Big\langle\!\!\Big\langle \partial_t \frac{\delta\ell_d}{\delta A}, B\Big\rangle\!\!\Big\rangle + \Big\langle\!\!\Big\langle \frac{\delta\ell_d}{\delta A}, [A,B]\Big\rangle\!\!\Big\rangle + \Big\langle \frac{\delta\ell_d}{\delta D},D\cdot B \Big\rangle+\Big\langle \frac{\delta\ell_d}{\delta S},S\cdot B \Big\rangle=0, \quad \forall \;t \in (0,T),\quad\forall\; B\in \Delta_h^R,
\end{equation}
or, equivalently,
\begin{equation}\label{EP_a_strong_fluid_S}
 \partial_t \frac{\delta\ell_d}{\delta A}+ \operatorname{ad}^*_A \frac{\delta\ell_d}{\delta A} - \frac{\delta\ell_d}{\delta D}\diamond D-  \frac{\delta\ell_d}{\delta S}\diamond S\in (\Delta_h^R)^\circ , \quad \forall t \in (0,T),
\end{equation}
together with
\begin{equation} \label{Devolution}
\langle \partial_t D, E \rangle + \langle D, AE \rangle = 0,\quad\text{and}\quad \langle \partial_t S, H \rangle + \langle S, AH \rangle = 0, \quad \forall t \in (0,T),\quad\forall\; E,H\in V_h^r.
\end{equation}
In analogy with~(\ref{energy_pres_A}-\ref{energy_pres_D}), an energy-preserving time discretization is given by
\begin{align}
&&\Big\langle\!\!\Big\langle \frac{1}{\Delta t}\left(\frac{\delta\ell_d}{\delta A_k}- \frac{\delta\ell_d}{\delta A_{k-1}}\right) , B_k \Big\rangle\!\!\Big\rangle +\frac{1}{2} \Big\langle\!\!\Big\langle \frac{\delta\ell_d}{\delta A_{k-1}} + \frac{\delta\ell_d}{\delta A_k}, [A_{k-1/2},B_k] \Big\rangle\!\!\Big\rangle \hspace{0in}& \nonumber \\
&&  + \langle F_{k-1/2}, D_{k-1/2} \cdot B_k\rangle +  \langle G_{k-1/2}, S_{k-1/2} \cdot B_k\rangle&=0,\quad \forall\; B_k \in \Delta_h^R, \label{baro_energy_pres_A} \\
&&\left\langle \frac{D_k - D_{k-1}}{\Delta t}, E_k \right\rangle + \langle D_{k-1/2} \cdot A_{k-1/2}, E_k \rangle &= 0, \quad \forall E_k \in V_h^r, \label{baro_energy_pres_D} \\
&&\left\langle \frac{S_k - S_{k-1}}{\Delta t}, H_k \right\rangle + \langle S_{k-1/2} \cdot A_{k-1/2}, H_k \rangle &= 0, \quad \forall H_k \in V_h^r, \label{baro_energy_pres_S}
\end{align}
where $A_{k-1/2} = \frac{1}{2}(A_{k-1}+A_k)$, $D_{k-1/2} = \frac{1}{2}(D_{k-1}+D_k)$, $S_{k-1/2} = \frac{1}{2}(S_{k-1}+S_k)$,
\begin{align}
F_{k-1/2} &= \frac{1}{2} \widehat{A_{k-1}} \cdot \widehat{A_k} - \frac{1}{2}\left(f(D_{k-1},D_k,S_{k-1})+f(D_{k-1},D_k,S_k) \right) - \Phi, \label{baro_discrete_gradient1} \\
G_{k-1/2} &= -\frac{1}{2}\left( g(S_{k-1},S_k,D_{k-1}) + g(S_{k-1},S_k,D_k)\right), \label{baro_discrete_gradient2}
\end{align}
and
\begin{align*}
f(D,D',S) &= \frac{D'e(D',S/D')-De(D,S/D)}{D'-D}, \\
g(S,S',D) &= \frac{De(D,S'/D)-De(D,S/D)}{S'-S}.
\end{align*}
We took $e$ equal to the internal energy for a perfect gas,
\[
e(\rho,\eta) =K e^{\eta/C_v} \rho^{\gamma-1},
\]
where $\gamma=5/3$ and $K=C_v=1$, and we used a graviational potential $\Phi = -y$, which corresponds to an upward gravitational force.  We initialized
\begin{align*}
\rho(x,y,0) &= 1.5-0.5\tanh\left( \frac{y-0.5}{0.02} \right), \\
u(x,y,0) &= \left( 0, -0.025\sqrt{\frac{\gamma p(x,y)}{\rho(x,y,0)}} \cos(8\pi x) \exp\left( -\frac{(y-0.5)^2}{0.09}\right) \right), \\
s(x,y,0) &= C_v \rho(x,y,0) \log\left( \frac{p(x,y)}{(\gamma-1)K\rho(x,y,0)^\gamma} \right),
\end{align*}
where 
\[
p(x,y) = 1.5y+1.25 + (0.25-0.5y) \tanh\left( \frac{y-0.5}{0.02} \right).
\]
We implemented~(\ref{baro_energy_pres_A}-\ref{baro_energy_pres_S}) with $\Delta t = 0.01$ and with the finite element spaces $R_h = RT_0(\mathcal{T}_h)$ and $V_h^1$ on a uniform triangulation $\mathcal{T}_h$ of $\Omega = (0,1/4) \times (0,1)$ with maximum element diameter $h = 2^{-8}$.  
We incorporated upwinding into~(\ref{baro_energy_pres_A}-\ref{baro_energy_pres_S}) using the strategy detailed in~\cite{GaGB2019}, which retains the scheme's energy-preserving property.  
Plots of the computed mass density at various times $t$ are shown in Fig.~\ref{fig:raytay}.  Fig.~\ref{fig:raytay_energy} confirms that energy was preserved exactly up to roundoff errors.

\appendix

\section{Euler-Poincar\'e variational principle}\label{Appendix_A}

In this Appendix we first recall the Euler-Poincar\'e principle for invariant Euler-Lagrange systems on Lie groups. This general setting underlies the Lie group description of incompressible flows recalled in \S\ref{subsec_incomp} due to \cite{Ar1966}, in which case the Lie group is $G=\operatorname{Diff}_{\rm vol}(\Omega)$. It also underlies the semidiscrete setting, in which case the Lie group is $G=G_h$. In this situation, however, a nonholonomic constraint needs to be considered, see Appendix \ref{Appendix_B}. Then, we describe the extension of this setting that is needed to formulate the variational formulation of compressible flow and its discretization.

\subsection{Euler-Poincar\'e variational principle for incompressible flows}\label{A1}

Let $G$ be a Lie group and let $L:TG \rightarrow \mathbb{R}$ be a Lagrangian defined on the tangent bundle $TG$ of $G$. The associated equations of evolution, given by the Euler-Lagrange equations, arise as the critical curve condition 
for the Hamilton principle
\begin{equation}\label{HP_G}
\delta\int_0^T L(g(t), \dot g(t)){\rm d} t=0,
\end{equation}
for arbitrary variations $\delta g$ with $\delta g(0)= \delta g(T)=0$.

\medskip

If we assume that $L$ is $G$-invariant, i.e., $L(gh, \dot gh)=L(g, \dot g)$, for all $h\in G$, then $L$ induces a function $\ell:\mathfrak{g}\rightarrow \mathbb{R}$ on the Lie algebra $\mathfrak{g}$ of $G$, defined by $\ell(u)=L(g,\dot g)$, with $u=\dot g g^{-1}\in \mathfrak{g}$. In this case the equations of motion can be expressed exclusively in terms of $u$ and $\ell$ and are obtained by rewriting the variational principle \eqref{HP_G} in terms of $\ell$ and $u(t)$. One gets
\begin{equation}\label{EP}
\delta\int_0^T \ell(u(t)){\rm d}t=0,\quad \text{for $\delta u = \partial_t v + [v,u]$},
\end{equation}
where $v(t)\in \mathfrak{g}$ is an arbitrary curve with $v(0)=v(T)=0$. The form of the variation $\delta u$ in \eqref{EP} is obtained by a direct computation using $u= \dot g g^{-1}$ and defining $v= \delta g g^{-1}$. 

\medskip

In order to formulate the equations associated to \eqref{EP} one needs to select an appropriate space in nondegenerate duality with $\mathfrak{g}$ denoted $\mathfrak{g}^*$ (the usual dual space in finite dimensions). We shall denote by $\langle\!\langle\,, \rangle \!\rangle: \mathfrak{g}^* \times \mathfrak{g}\rightarrow \mathbb{R}$ the associated nondegenerate duality pairing. From \eqref{EP} one directly obtains the equation
\begin{equation}\label{EP_weak}
\Big\langle\!\!\Big\langle \partial_t \frac{\delta\ell}{\delta u}, v\Big\rangle\!\!\Big\rangle + \Big\langle\!\!\Big\langle \frac{\delta\ell}{\delta u}, [u,v]\Big\rangle\!\!\Big\rangle =0,\quad\forall\; v\in \mathfrak{g}.
\end{equation}
In \eqref{EP_weak}, the functional derivative $\frac{\delta\ell}{\delta u}\in \mathfrak{g}^*$ of $\ell$ is defined in terms of the duality pairing as
\[
\Big\langle\!\!\Big \langle\frac{\delta\ell}{\delta u}, \delta u \Big\rangle\!\!\Big\rangle= \left.\frac{d}{d\epsilon}\right|_{\epsilon=0}\ell(u+\epsilon\delta u).
\]

In finite dimensions, and under appropriate choices for the functional spaces in infinite dimensions, \eqref{EP_weak} is equivalent to the Euler-Poincar\'e equation
\[
\partial_t \frac{\delta\ell}{\delta u}+ \operatorname{ad}^*_u\frac{\delta\ell}{\delta u} =0,
\]
where the coadjoint operator $ \operatorname{ad}^*_u: \mathfrak{g}^* \rightarrow \mathfrak{g}^*$ is defined by $\langle\!\langle \operatorname{ad}^*_um,v\rangle \!\rangle=\langle\!\langle m,[u,v]\rangle \!\rangle$.

\medskip

For incompressible flows, without describing the functional analytic setting for simplicity, we have $G=\operatorname{Diff}_{\rm vol}(\Omega)$ and $\mathfrak{g}= \mathfrak{X}_{\rm vol}(\Omega)$ the Lie algebra of divergence free vector fields parallel to the boundary. We can choose $\mathfrak{g}^*= \mathfrak{X}_{\rm vol}(\Omega)$ with duality pairing $\langle\!\langle\,, \rangle \!\rangle$ given by the $L^2$ inner product. A direct computation gives the coadjoint operator $\operatorname{ad}^*_u m= \mathbf{P}(u \cdot \nabla m + \nabla u ^\mathsf{T} m)$, where $\mathbf{P}$ is the Leray-Hodge projector onto $\mathfrak{X}_{\rm vol}(\Omega)$. One directly checks that in this case \eqref{EP} yields the Euler equations \eqref{Euler_equations} for incompressible flows.

\subsection{Euler-Poincar\'e variational principle for compressible flows}\label{A2}

The general setting underlying the variational formulation for compressible fluids starts exactly as before, namely, a system whose evolution is given by the Euler-Lagrange equations for a Lagrangian defined on the tangent bundle of a Lie group $G$. The main difference is that the Lagrangian depends parametrically on some element $a_0\in V$ of a vector space (the reference mass density $\varrho_0$ in the case of the barotropic compressible fluid, the reference mass and entropy densities $\varrho_0$ and $S_0$ for the general compressible fluid) on which $G$ acts by representation, and, in addition, $L$ is invariant only under the subgroup of $G$ that keeps $a_0$ fixed. If we denote by $L(g, \dot g, a_0)$ this Lagrangian and by $a\in V\mapsto a\cdot g \in V$ the representation of $G$ on $V$, the reduced Lagrangian is defined by $\ell(u, a)= L(g, \dot g, a_0)$, where $u= \dot gg^{-1}$, $a= a_0\cdot g^{-1}$.

\medskip

The Hamilton principle now yields the variational formulation
\begin{equation}\label{EP_a}
\delta\int_0^T \ell(u(t),a(t)){\rm d}t=0,\quad \text{for $\delta u = \partial_t v + [v,u]$ and $\delta a= - a\cdot v$},
\end{equation}
where $v(t)\in \mathfrak{g}$ is an arbitrary curve with $v(0)=v(T)=0$. The form of the variation $\delta u$ in \eqref{EP_a} is the same as before, while the expression for $\delta  a$ is obtained from the relation $a= a_0\cdot g^{-1}$.

\medskip

From \eqref{EP_a} and with respect to the choice of a spaces $\mathfrak{g}^*$ and $V^*$ in nondegenerate duality with $\mathfrak{g}$ and $V$, with duality pairings $\langle\!\langle \,,\rangle\!\rangle$ and $\langle\,,\rangle_V$, one directly obtains the equations
\begin{equation}\label{EP_a_weak}
\Big\langle\!\!\Big\langle \partial_t \frac{\delta\ell}{\delta u}, v\Big\rangle\!\!\Big\rangle + \Big\langle\!\!\Big\langle \frac{\delta\ell}{\delta u}, [u,v]\Big\rangle\!\!\Big\rangle + \Big\langle \frac{\delta\ell}{\delta a},a\cdot v \Big\rangle_V=0,\quad\forall\; v\in \mathfrak{g}.
\end{equation}
The continuity equation
\[
\partial_t a + a \cdot u=0
\]
arises from the definition $a(t)= a_0 \cdot g(t)^{-1}$. In a similar way with above, \eqref{EP_a_weak} now yields the Euler-Poincar\'e equations
\begin{equation}\label{EP_a_strong}
\partial_t \frac{\delta\ell}{\delta u}+ \operatorname{ad}^*_u\frac{\delta\ell}{\delta u} =\frac{\delta \ell}{\delta a}\diamond a,
\end{equation}
where $\frac{\delta \ell}{\delta a}\diamond a\in \mathfrak{g}^*$ is defined by $\left\langle\!\left\langle\frac{\delta \ell}{\delta a}\diamond a,v\right\rangle\!\right\rangle = -\Big\langle \frac{\delta\ell}{\delta a},a\cdot v \Big\rangle_V $, for all $v\in \mathfrak{g}$. We refer to \cite{HoMaRa1998} for a detailed exposition.

\medskip

For the compressible fluid, in the continuous case we have $G= \operatorname{Diff}(\Omega)$ and $\mathfrak{g}=\mathfrak{X}(\Omega)$ the Lie algebra of vector fields on $\Omega$ with vanishing normal component to the boundary. We choose to identify $\mathfrak{g}^*$ with $\mathfrak{g}$ via the $L^2$ duality pairing. Consider the Lagrangian \eqref{Baroclinic_L} of the general compressible fluid. Using the expressions $\operatorname{ad}^*_u m = u \cdot \nabla m + \nabla u^\mathsf{T} m + m \operatorname{div} u$, $\frac{\delta \ell}{\delta u}= \rho u$, $\frac{\delta \ell}{\delta \rho}= \frac{1}{2}|u|^2 - e(\rho) - \rho \frac{\partial e}{\partial \rho}+\eta \frac{\partial e}{\partial\eta}-\Phi$, $\frac{\delta \ell}{\delta s}= - \frac{\partial e}{\partial\eta}$, $\frac{\delta\ell}{\delta \rho}\diamond \rho= \rho \nabla \frac{\delta\ell}{\delta \rho}$, and $\frac{\delta\ell}{\delta s}\diamond s= s \nabla \frac{\delta\ell}{\delta s}$, one directly obtains
\[
\rho(\partial_tu + u \cdot\nabla u)= - \nabla p - \rho\nabla\Phi
\]
from \eqref{EP_a_strong}, with $p= \rho^2\frac{\partial e}{\partial \rho}$. 
For the semidiscrete case, one uses a nonholonomic version of the Euler-Poincar\'e equations \eqref{EP_a_strong}, reviewed in the next paragraph.

\section{Remarks on the nonholonomic Euler-Poincar\'e variational formulation}\label{Appendix_B}

Hamilton's principle can be extended to the case in which the system under consideration is subject to a constraint, given by a distribution on the configuration manifold, i.e., a vector subbundle of the tangent bundle. This is known as the Lagrange-d'Alembert principle and, for a system on a Lie group $G$ and constraint $\Delta_G \subset TG$, it is given by the same critical condition \eqref{HP_G} but only with respect to variations statisfaying the constraint, i.e.,  $\delta g\in \Delta_G$.

\medskip

In the $G$-invariant setting recalled in \S\ref{A1} it is assumed that the constraint $\Delta_G$ is also $G$-invariant and thus induces a subspace $\Delta \subset \mathfrak{g}$ of the Lie algebra. In the more general setting of \S\ref{A2}, one can allow $\Delta_G$ to be only $G_{a_0}$-invariant, although for the situation of interest in this paper, $\Delta_G$ is also $G$-invariant. 

\medskip

The Lagrange-d'Alembert principle yields now the Euler-Poincar\'e-d'Alembert principle \eqref{EP_a} in which we have the additional constraint $u(t)\in \Delta$ on the solution and $v(t)\in \Delta$ on the variations, so that \eqref{EP_a_weak} becomes
\begin{equation}\label{EP_a_weak_NH}
\Big\langle\!\!\Big\langle \partial_t \frac{\delta\ell}{\delta u}, v\Big\rangle\!\!\Big\rangle + \Big\langle\!\!\Big\langle \frac{\delta\ell}{\delta u}, [u,v]\Big\rangle\!\!\Big\rangle + \Big\langle \frac{\delta\ell}{\delta a},a\cdot v \Big\rangle_V=0,\quad\text{for all $v\in \Delta$, where $u\in \Delta$}.
\end{equation}
In presence of the nonholonomic constraint, \eqref{EP_a_strong} becomes
\begin{equation}\label{EP_a_strong_NH}
\partial_t \frac{\delta\ell}{\delta u}+ \operatorname{ad}^*_u\frac{\delta\ell}{\delta u} -\frac{\delta \ell}{\delta a}\diamond a\in \Delta^\circ,\qquad u \in \Delta,
\end{equation}
where $\Delta^\circ= \{m \in \mathfrak{g}^* \mid \langle\!\langle m,u\rangle\!\rangle=0, \;\forall\; u\in \Delta\}$.

\medskip

There are two important remarks concerning \eqref{EP_a_weak_NH} and \eqref{EP_a_strong} that play an important role for the variational discretization carried out in this paper.
First, we note that although the solution belongs to the constraint, i.e., $u\in \Delta$, the equations depend on the expression of the Lagrangian $\ell$ on a larger space, namely, on $\Delta+ [\Delta, \Delta]$. It is not enough to have its expression only on $\Delta$. This is a main characteristic of nonholonomic mechanics. 
Second, a sufficient condition to get a solvable differential equation is that the map $u \in \Delta \mapsto \frac{\delta \ell}{\delta u } \in \mathfrak{g}^*/\Delta^\circ$ is a diffeomorphism for all $a$.

\medskip

\section{Polynomials}\label{Appendix_C}

Below we prove two facts about polynomials that are used in the proof of Proposition~\ref{important_prop}. We denote by $H_r(K)$ the space of homogeneous polynomials of degree $r$ on a simplex $K$.  To distinguish powers from indices, we denote coordinates by $x_1,x_2,\dots,x_n$ rather than $x^1,x^2,\dots,x^n$ in this section.

\begin{lemma} \label{lemma:PrPr}
Let $K$ be a simplex of dimension $n \ge 1$.  For every integer $r \ge 0$,
\[
\left\{ \sum_{i=1}^N p_i q_i \mid N \in \mathbb{N}, \,  p_i, q_i \in P_r(K), \, i=1,2,\dots,N \right\} = P_{2r}(K).
\]
\end{lemma}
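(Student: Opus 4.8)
The plan is to establish the two inclusions separately. The inclusion ``$\subseteq$'' is immediate: the left-hand side contains $0$, is closed under addition, and a scalar multiple $\lambda\sum_i p_i q_i$ equals $\sum_i(\lambda p_i)q_i$, so it is a linear subspace of the space of polynomial functions on $K$; moreover each product $p_i q_i$ has degree at most $2r$, so the left-hand side is in fact a subspace of $P_{2r}(K)$. For the reverse inclusion it therefore suffices to show that every monomial $x^\alpha = x_1^{\alpha_1}\cdots x_n^{\alpha_n}$ with $|\alpha| := \alpha_1 + \cdots + \alpha_n \le 2r$ belongs to the left-hand side, since these monomials span $P_{2r}(K)$.

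The key step will be the elementary combinatorial fact that any multi-index $\alpha$ with $|\alpha| \le 2r$ can be split as $\alpha = \beta + \gamma$ (componentwise) with $|\beta| \le r$ and $|\gamma| \le r$. To see this: if $|\alpha| \le r$, take $\beta = \alpha$ and $\gamma = 0$; if $r < |\alpha| \le 2r$, construct $\beta$ with $\beta_j \le \alpha_j$ for all $j$ and $|\beta| = r$ by a greedy choice (run through the coordinates in order, setting each $\beta_j$ as large as possible without exceeding $\alpha_j$ or pushing the running sum past $r$; since $|\alpha| \ge r$, the value $r$ is attained), and put $\gamma := \alpha - \beta \ge 0$, so that $|\gamma| = |\alpha| - r \le r$.

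Granting this, for each monomial $x^\alpha$ with $|\alpha| \le 2r$ we write $x^\alpha = x^\beta \cdot x^\gamma$ with $\beta,\gamma$ as above; since $|\beta| \le r$ and $|\gamma| \le r$ we have $x^\beta, x^\gamma \in P_r(K)$, exhibiting $x^\alpha$ as a single product of two elements of $P_r(K)$, hence an element of the left-hand side. Because the left-hand side is a linear subspace, it then contains the span of all such monomials, namely all of $P_{2r}(K)$, which gives ``$\supseteq$'' and finishes the proof. I do not expect any genuine obstacle here; the only point needing a moment's care is the multi-index splitting, and the greedy argument above disposes of it.
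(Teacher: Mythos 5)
Your proof is correct and follows essentially the same route as the paper, which simply observes that every monomial in $P_{2r}(K)$ is a product of two monomials in $P_r(K)$; you have merely spelled out the multi-index splitting that the paper leaves implicit.
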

\begin{proof}
This follows from the fact that every monomial in $P_{2r}(K)$ can be written as a product of two monomials in $P_r(K)$.
\end{proof}

\begin{lemma} \label{lemma:PrgradPr}
Let $K$ be a simplex of dimension $n \in \{2,3\}$.  For every integer $r \ge 0$,
\[
\left\{ \sum_{i=1}^N p_i \nabla q_i \mid N \in \mathbb{N}, \,  p_i, q_i \in P_r(K), \, i=1,2,\dots,N \right\} = P_{2r-1}(K)^n.
\]
\end{lemma}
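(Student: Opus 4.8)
The inclusion $\bigl\{\sum_i p_i\nabla q_i\bigr\}\subseteq P_{2r-1}(K)^n$ is immediate from degree counting, since $p_i\in P_r(K)$ and the components of $\nabla q_i$ lie in $P_{r-1}(K)$. (For $r=0$ both sides equal $\{0\}$, so assume $r\ge 1$.) For the reverse inclusion it suffices to show that every vector monomial $x^\alpha e_k$ with $|\alpha|\le 2r-1$, $1\le k\le n$, lies in $W:=\operatorname{span}\{p\nabla q\mid p,q\in P_r(K)\}$. The plan is to first dispatch the ``easy'' pairs $(\alpha,k)$, then reduce the remaining ones to the easy ones through a single algebraic identity, controlling the error terms this identity produces. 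The easy step: with $b=\max(1,|\alpha|-r+1)$ and $p=x^{\alpha-(b-1)e_k}$ one checks $b\le r$, $\deg p\le r$, and $p\,\nabla(x_k^b)=b\,x^\alpha e_k$, so $x^\alpha e_k\in W$ whenever $\alpha_k\ge|\alpha|-r$ (in particular whenever $|\alpha|\le r$). Call such a pair \emph{good}.

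For a non-good pair put $d=|\alpha|$, so $d\ge r+1$ and $\alpha_k\le d-r-1$. Writing $x^{\alpha+e_k}=x^\beta x^\gamma$ with $|\beta|,|\gamma|\le r$ and $\beta_k\ge1$, one gets
\[
x^\gamma\nabla(x^\beta)=\beta_k\,x^\alpha e_k+\sum_{j\ne k}\beta_j\,x^{\alpha+e_k-e_j}e_j\ \in\ W,
\]
expressing $x^\alpha e_k$ via an element of $W$ and the ``error'' vector monomials $x^{\alpha+e_k-e_j}e_j$, $j\ne k$, which still have total degree $d$. I would take $\beta_k=\alpha_k+1$ maximal and route all of $\beta$'s remaining mass (there is exactly $d-r-\alpha_k\ge1$ of it, and $|\beta|=d+1-r\le r$) onto a single coordinate $j^\ast\ne k$ with $\alpha_{j^\ast}$ maximal; using $n\le3$ and $d\le2r-1$ one checks this is possible and leaves exactly one error, $x^{\alpha+e_k-e_{j^\ast}}e_{j^\ast}$. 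If some coordinate $j\ne k$ has $\alpha_j\ge d-r+1$, routing the extra mass there instead makes that single error good, whence $x^\alpha e_k\in W$. The remaining (``hard'') non-good pairs are precisely those with $\alpha_j\le d-r$ for every $j\ne k$; this is exactly where $n\le3$ is genuinely used, since then the two coordinates $\ne k$ carry a total exponent $d-\alpha_k>r$ while each is $\le d-r\le r-1$, which both permits such pairs and tightly constrains them (e.g.\ it forces $\alpha_k\ge1$).

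For a hard pair, iterating the maneuver above (always routing to the argmax coordinate) carries it to another hard pair; because there are only $n\le3$ coordinates and the exponent vector evolves by $\alpha\mapsto\alpha+e_k-e_{j^\ast}$, the telescoping identity $\alpha^{(m)}=\alpha^{(0)}+e_{k^{(0)}}-e_{k^{(m)}}$ forces the orbit to close up into a cycle of $2$ or $3$ states. The monomials occurring in such a cycle satisfy a cyclic linear system with strictly positive diagonal entries $c_i=\alpha^{(i)}_{k^{(i)}}+1\ge2$ and strictly positive off-diagonal entries $e_i=(d-r)-\alpha^{(i)}_{k^{(i)}}\ge1$; its determinant is $\prod_i c_i+\prod_i e_i>0$ in the $3$-cycle case, and equals $(p+1)\bigl(\alpha^{(0)}_{k^{(0)}}+\alpha^{(1)}_{k^{(1)}}-p+1\bigr)$ with $p=d-r$ in the $2$-cycle case, which vanishes only when the coordinate untouched by the $2$-cycle carries exponent exactly $r$ --- impossible for a hard pair. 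Hence the system is invertible and $x^\alpha e_k\in W$. The genuinely delicate point, and the step I expect to require the most care, is this last one: correctly identifying the cycle structure and verifying nonsingularity of the small systems; everything else is routine. (For $n=2$ there are no hard pairs at all, and one can alternatively bypass the combinatorics by using exactness of the polynomial de Rham complex on the simplex together with $\curl(p\nabla q)=\det(\nabla p,\nabla q)$ to reduce to the elementary fact that $\{\det(\nabla p,\nabla q)\mid p,q\in P_r(K)\}$ spans $P_{2r-2}(K)$.)
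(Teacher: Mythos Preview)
Your argument is correct and takes a genuinely different route from the paper. The paper argues by induction on $r$: it uses the inductive hypothesis $Q_{r-1}(K)=P_{2r-3}(K)^n$ to absorb all of $P_{2r-2}(K)^n$ (multiplying each term of a $P_{r-1}$-decomposition by a single variable), and then handles the homogeneous top-degree piece $H_{2r-1}(K)^n$ with explicit two-term formulas for $n=2$, merely remarking that $n=3$ is ``handled similarly''. You instead work directly, without induction: the ``good'' pairs $(\alpha,k)$ with $\alpha_k\ge|\alpha|-r$ admit a one-term expression $p\,\nabla(x_k^b)$; every remaining pair is reduced via a single product-rule identity to one error term of the same total degree; and the residual ``hard'' pairs (which, as you note, exist only when $n=3$) are dispatched by iterating the reduction and inverting the resulting small cyclic linear system. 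This costs you a finer case analysis but yields a fully explicit $n=3$ argument, which the paper does not provide. One minor imprecision worth flagging: under argmax routing the orbit need not return to its starting state---a length-one tail feeding into a $2$-cycle can occur (for instance $r=4$, $d=7$, $\alpha=(1,3,3)$, $k=1$, where the orbit goes $1\to 2\to 3\to 2\to\cdots$). This is harmless for your proof: the pairs in the $2$-cycle are still hard, so your untouched-coordinate bound $\alpha_\ell\le d-r<r$ makes the $2\times2$ determinant nonzero, and one then back-substitutes into the first relation to recover the tail monomial.
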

\begin{proof}
We proceed by induction.

Denote $Q_r(K) = \left\{ \sum_{i=1}^N p_i \nabla q_i \mid N \in \mathbb{N}, \,  p_i, q_i \in P_r(K), \, i=1,2,\dots,N \right\}$.  By inductive hypothesis, $Q_r(K)$ contains $Q_{r-1}(K)=P_{2r-3}(K)^n$.  It also contains $H_{2r-2}(K)^n$.  Indeed, if $f e_k \in H_{2r-2}(K)^n$ with $k \in \{1,2,\dots,n\}$ and $f$ a monomial, then $f=x_j g$ for some $g \in H_{2r-3}(K)$ and some $j \in \{1,2,\dots,n\}$, so $fe_k = \sum_i (x_j p_i) \nabla q_i \in Q_r(K)$ for some $p_i,q_i \in P_{r-1}(K)$ by inductive hypothesis.  Thus, $Q_r(K)$ contains $P_{2r-2}(K)^n$.  

Next we show that $Q_r(K)$ contains every $u \in H_{2r-1}(K)^n$.  Without loss of generality, we may assume $u = fe_1$ with $f \in H_{2r-1}(K)$ a monomial.  When $n=2$, the only such vector fields are $u = x_1^a x_2^{2r-1-a} e_1$, $a=0,1,\dots,2r-1$, which can be expressed as
\[
x_1^a x_2^{2r-1-a} e_1 = \begin{cases}
\frac{1}{r} x_1^{a-r+1} x_2^{2r-1-a} \nabla (x_1^r), &\mbox{ if } a \ge r-1, \\
\frac{1}{a+1} x_2^r \nabla ( x_1^{a+1} x_2^{r-1-a} ) - \frac{r-1-a}{(a+1)r} x_1^{a+1} x_2^{r-1-a} \nabla (x_2^r),  &\mbox{ if } a < r-1.
\end{cases}
\]
The case $n=3$ is handled similarly by considering the vector fields $f e_1$ with 
\[
f \in \{x_1^a x_2^b x_3^{2r-1-a-b} \mid a,b \ge 0, \, a+b \le 2r-1 \}.
\]
\end{proof}


\begin{thebibliography}{9}

\bibitem{Ar1982}
D. N. Arnold [1982], An interior penalty finite element method with discontinuous elements, \textit{SIAM Journal on Numerical Analysis}, \textbf{19}(4), 742--760.

\bibitem{Ar1966}
V. I. Arnold [1966], Sur la g\'eom\'etrie diff\'erentielle des des groupes de Lie de dimension infinie et ses applications \`a l'hydrodynamique des fluides parfaits, \textit{Ann. Inst. Fourier}, Grenoble \textbf{16}, 319--361.

\bibitem{BaGB2019}
W. Bauer, F. Gay-Balmaz [2019], Towards a variational discretization of compressible fluids: the rotating shallow water equations, \textit{Journal of Computational Dynamics}, \textbf{6}(1), \url{https://arxiv.org/pdf/1711.10617.pdf}

\bibitem{BaGB2019b}
W. Bauer, F. Gay-Balmaz [2019], Variational integrators for anelastic and pseudo-incompressible flows, \textit{J. Geom. Mech.}, to appear.


\bibitem{Ba2012}
P. K. Bhattacharyya [2012], \textit{Distributions, Generalized Functions with Applications in Sobolev Spaces},  Berlin, Boston: De Gruyter. 

\bibitem{BRMa2009}
N. Bou-Rabee and J. E. Marsden [2009], Hamilton-Pontryagin Integrators on Lie Groups. Part I: Introduction and Structure-Preserving Properties, \textit{Foundations of Computational Mathematics}, \textbf{9}, 197--219.

\bibitem{BrBaBiGBML2019}
R. Brecht, W. Bauer, A. Bihlo, F. Gay-Balmaz, and S. MacLachlan [2019], Variational integrator for the rotating shallow-water equations on the sphere, \textit{Q. J. Royal Meteorol. Soc.}, \textbf{145}, 1070--1088. \url{https: //arxiv.org/pdf/1808.10507.pdf}

\bibitem{BrFo1991}
F. Brezzi and M. Fortin [1991], \textit{Mixed and Hybrid Finite Element Methods}, 
Springer-Verlag, New York.


\bibitem{DeGaGBZe2014}
M. Desbrun, E. Gawlik, F. Gay-Balmaz and V. Zeitlin [2014], Variational discretization for rotating stratified fluids, \textit{Disc. Cont. Dyn. Syst. Series A}, \textbf{34}, 479--511.


\bibitem{ErGu2004}
A. Ern and J.-L. Guermond [2004], \textit{Theory and Practice of Finite Elements}, Applied Mathematical Sciences, \textbf{159}, Springer.

\bibitem{GaMuPaMaDe2011}
E. Gawlik, P. Mullen, D. Pavlov, J. E. Marsden and M. Desbrun [2011], Geometric, variational
discretization of continuum theories, \textit{Physica D}, \textbf{240}, 1724--1760.

\bibitem{GaGB2019}
E. Gawlik and F. Gay-Balmaz [2019], A conservative finite element method for the incompressible Euler equations with variable density, preprint.

\bibitem{Guzman2016}
J. Guzman, C. W. Shu, and A. Sequeira [2016], H(div) conforming and DG methods for incompressible Euler's equations, \emph{IMA Journal of Numerical Analysis}, \textbf{37}(4), 1733--71.


\bibitem{HaLuWa2006}
E. Hairer, C. Lubich, and G. Wanner [2006], \textit{Geometric Numerical Integration: Structure-Preserving Algorithms for Ordinary Diffrential Equations}, Springer Series in Computational Mathematics, \textbf{31}, Springer-Verlag, 2006.

\bibitem{HoMaRa1998}
D. D. Holm, J. E. Marsden, and T. S. Ratiu [1998], The Euler-Poincar\'e equations and semidirect
products with applications to continuum theories, \textit{Adv. in Math.}, \textbf{137}, 1--81.

\bibitem{LiMaHoToDe2015}
B. Liu, G. Mason, J. Hodgson, Y. Tong and M. Desbrun [2015], Model-reduced variational fluid simulation, \textit{ACM Trans. Graph. (SIG Asia)}, \textbf{34}, Art. 244.

\bibitem{MaWe2001}
J. E. Marsden and M. West [2001], Discrete mechanics and variational integrators, \textit{Acta Numer.}, \textbf{10}, 357--514.


\bibitem{NaCo2018}
A. Natale and C. Cotter [2018], A variational H(div) finite-element discretization approach for perfect incompressible fluids, \textit{IMA Journal of Numerical Analysis}, \textbf{38}(2), 1084.


\bibitem{PaMuToKaMaDe2010}
D. Pavlov, P. Mullen, Y. Tong, E. Kanso, J. E. Marsden and M. Desbrun [2010], Structure-preserving
discretization of incompressible fluids, \textit{Physica D}, \textbf{240}, 443--458.


\end{thebibliography}
\end{document}